\newcommand{\suchthat}[0]{\mid}
\newtheorem{assumption}{Assumption}
\newtheorem{example}{Example}
\newtheorem{remark}{Remark}
\newtheorem{definition}{Definition}
\newtheorem{lemma}{Lemma}
\newtheorem{theorem}{Theorem}
\newtheorem{corollary}{Corollary}
\newcommand{\proofpart}[2]{%
  \par
  \addvspace{\medskipamount}%
  \noindent\emph{Part #1: #2}\par\nobreak
  \addvspace{\smallskipamount}%
  \@afterheading
}
\renewcommand*{\descriptionlabel}[1]{\xdef\@currentlabel{#1}%
\hspace\labelsep\normalfont\bfseries #1}
\begin{document}
\title{Fixed Points of Cone Mapping with the Application to Neural Networks}
\author{Grzegorz Gabor$^{1}$ and Krzysztof Rykaczewski$^{\star,2}$\\
  % \thanks{\textcolor{red}{The work of TP was supported by a grant.}}
  % \thanks{\textcolor{red}{The work of RLGC was supported by a grant.}}\\
%\vspace{\baselineskip}
  $^{1,2}$ Faculty of Mathematics and Computer Science,\\
  Nicolaus Copernicus University,
  Chopina 12/18, 87-100 Toruń, Poland\\
}

% make the title area
\maketitle

\begin{abstract}
  We derive conditions for the existence of fixed points of cone mappings without assuming scalability of functions.
  Monotonicity and scalability are often inseparable in the literature in the context of searching for fixed points of interference mappings.
  In applications, such mappings are approximated by non-negative neural networks.
  It turns out, however, that the process of training non-negative networks requires imposing an artificial constraint on the weights of the model.
  However, in the case of specific non-negative data, it cannot be said that if the mapping is non-negative, it has only non-negative weights.
  Therefore, we considered the problem of the existence of fixed points for general neural networks, assuming the conditions of tangency conditions with respect to specific cones.
  This does not relax the physical assumptions, because even assuming that the input and output are to be non-negative, the weights can have (small, but) less than zero values.
  Such properties (often found in papers on the interpretability of weights of neural networks) lead to the weakening of the assumptions about the monotonicity or scalability of the mapping associated with the neural network.
  To the best of our knowledge, this paper is the first to study this phenomenon.
\end{abstract}

\begin{IEEEkeywords}
Cone mappings, monotonic neural networks, scalable mappings, fixed point analysis.
\end{IEEEkeywords}

*Corresponding author.

\IEEEpeerreviewmaketitle

\section{Introduction}

%TODO
%As shown in [4], [10], [27], [33]–[37], among other studies, some optimization and feasibility problems in wireless networks reduce to computing eigenvectors and eigenvalues of possibly nonlinear mappings T : RN+ → RN+ 

%\section{Preliminaries}

The objective of this section is to develop part of the mathematical machinery required for the applications in this study. 

Let $(\mathbb{R}^{N}, \|\cdot\|)$ be an Euclidean space. By $\operatorname{int} K$ we mean the {\em interior} of subset $K \subset \mathbb{R}^{N}$.
For $\varepsilon > 0$ and $x \in \mathbb{R}^{N}$ denote {\em open ball} as $\mathcal{B}(x, \varepsilon) := \{a \in \mathbb{R}^{N} \suchthat \|a - x\| < \varepsilon\}$.
{\em Closed ball} is defined as closure of open ball $\mathcal{D}(0, \varepsilon) := \operatorname{cl} \mathcal{B}(0, \varepsilon)$.
Boundary of set $A$ is denoted $\partial A$.

We shall discuss cones and order relations in $\mathbb{R}^{N}$.
Let $\mathbb{R}_{+} := \{x \geqslant 0 \suchthat x \in \mathbb{R}\}$ be set of non-negative real numbers.
A nonempty closed and convex subset $K \subseteq \mathbb{R}^{N}$ is called a cone, if $\mathbb{R}_{+} K \subseteq K$ and $K \cap \left(-K\right) = \{0\}$\footnote{Sometimes it is called {\em pointed cone}.} hold.
Convex cone $K$ satisfy $K + K \subseteq K$.

Let us assume $K \neq \{0\}$ throughout.
For vectors $x, x^{\prime} \in \mathbb{R}^{N}$ we introduce the relations
\begin{equation}
  \begin{array}{lll}
    x \leqslant_K x^{\prime} & \text{if and only if} & x^{\prime} - x \in K, \\
    x <_K x^{\prime} & \text{if and only if} & x^{\prime} - x \in K \backslash\{0\}, \\
    x \ll_K x^{\prime} & \text{if and only if} & x^{\prime} - x \in \operatorname{int} K;
  \end{array}
\end{equation}
the latter one requires $K$to have nonempty interior; one speaks of a {\em solid} cone $K$.
Equipped with such a order, $(\mathbb{R}^{N}, \leqslant_K)$ is called an {\em ordered Euclidean space}.

\begin{remark}
  Relation $x <_{K} x^{\prime}$ means that $x \leqslant_{K} x^{\prime}$ and $x \neq x^{\prime}$.

  Relation $0 \leqslant_K x$ is equivalent to $x \in K$.

  In the special case where $K = \mathbb{R}_{+}^{N}$ we note that $x \leqslant_K y$ if and only if $x_{i} \leq y_{i}$, for all $1 \leq i \leq n$. 
  Moreover, in this context $x = (x_1, \ldots, x_N) \gg_K 0$ means that $x_i > 0$, $i = 1, \ldots, N$.
\end{remark}

\begin{example}
  The nonegative orthant $K := \mathbb{R}^{N}_{+} = \left\{x = \left(x_{1}, \ldots, x_{N}\right) \suchthat x_{i} \geqslant 0, i = 1, \ldots, N\right\} \subset \mathbb{R}^{N}$ denotes cone consisting of points with all coordinates nonegative, called the {\em positive cone}. % and denoted by $\mathbb{R}^{N}_{+}$.
  Using this cone we get the partial order $\leqslant_K$ where $x\leqslant_K y$ iff $x_i \leq y_i$, for all $i \in \{ 1, \ldots, N\}$. In this case, subscript $K$ is often omitted.
  If all coordinates of vector $x \in \mathbb{R}^{N}$ are positive we write $x \gg 0$ or $x \gg_{\mathbb{R}^N_{+}} 0$.
  %In what follows we write $\leqslant$ instead of $\leqslant_{K}$ unless it leads to ambiquity; we also write $x \lneq y$ if $y-x \in K \backslash\{0\}$ and $x \ll_K y$ if $y-x$ belongs to $\operatorname{int} K$ the interior of $K$.
\end{example}

\begin{example}

  Let $w \in \mathcal{H}$, $w \neq 0$.
  Another important cone in $\mathbb{R}^{N}$ generating partial order is
  \begin{equation}
    K := C(w, \beta) = \big\{ v \suchthat \langle v, w\rangle \geq \beta \|v\|\, \|w\| \big\},
  \end{equation}
  where $\beta \in [0, 1]$.
  %x\leqslant_K y :\Leftrightarrow y-x \in 
  Geometrically, inequality $x\leqslant_K y$ means that the vector $y - x$ makes with vector $w$ an angle less than or equal to $\alpha := \arccos\beta \in \left[0, \frac{\pi}{2}\right]$:
  \begin{equation}
    C\big(w, \cos(\alpha)\big) = \{z \in \mathbb{R}^{N} \suchthat \varangle(z, w) \leq \alpha\} \cup \{0\}.
  \end{equation}
  Such cone is called {\em ice cream cone} \cite[Section 4.1.9]{Cegielski2012}.

  In particular, the condition $\langle y - x, {\bf 1} \rangle \geq \beta \|y - x\|$, where ${\bf 1} := (1, \ldots, 1)$, can be equivalently written as
  \begin{equation}\label{eq1}
    \sum_{i = 1}^{N} (y_i - x_i) \geq \beta\sqrt{N \sum_{i = 1}^{N} (y_i - x_i)^2}.
  \end{equation}

  Let us consider $\mathbb{R}^2$ and $\beta := \frac{\sqrt{2}}{2} = \cos\frac{\pi}{4}$. Then from Equation~\eqref{eq1} we have $x \leqslant_K y$ if and only if
  \begin{align*}
    (y_1 - x_1) + (y_2 - x_2) & \geq \sqrt{(y_1 - x_1)^2 + (y_2 - x_2)^2} \\
    & \geq \max_i |y_i - x_i|.
  \end{align*}
  Hence, it immediately follows that $y_i \geq x_i$, that is actually $x \leqslant y$ in the order generated by the positive cone $\mathbb{R}^{N}_{+}$.

  More generally, if $\beta \leq \frac{\sqrt{2}}{2}$, then $\gamma := \beta\sqrt{2} \leq 1$ and inequality from Equation~\eqref{eq1} for $\mathbb{R}^2$ can be converted to a form useful in calculations:
  \begin{equation}
    y_1 - x_1 \geq \Lambda (y_2 - x_2) \mbox{ and } y_2 - x_2 \geq \Lambda (y_1 - x_1),
  \end{equation}
  where
  \begin{equation}
    \Lambda :=
    \begin{cases}
      \frac{-1 + \sqrt{1 - (1 - \gamma^2)^2}}{1 - \gamma^2}, & \text{for } \gamma < 1, \\
      0, & \text{for } \gamma = 1.
    \end{cases}
  \end{equation}
  From the above formula it is easy to see that, e.g. for a positive increment of $y_1-x_1$, the increment of $y_2-x_2$ may be negative, but controlled from below by a fixed coefficient $\Lambda$ equal in extreme situations $\Lambda = 0$, for $\gamma = 1$ and $\Lambda = -1$, for $\gamma = 0$ (the cone is then not {\em solid}).

\end{example}

Now assume you have given a set $S \subseteq \mathbb{R}^{N}$, then $l$ is called a {\em lowerbound to $S$ w.r.t. to $\leqslant_K$}, when
\begin{equation}
  l \leqslant_K s
\end{equation}
for all $s \in S$. Moreover, the {\em upperbound to $S$ w.r.t. to $\leqslant_K$}, $u$, is defined analogously.

An element $l$ is called the {\em infimum of $S$ w.r.t. to $\leqslant_K$}, if $l$ is a lowerbound of $S$ w.r.t. to $\leqslant_K$ and if for every other lowerbound $l^{\prime}$ of $S$ w.r.t. $\leqslant_K$ we have $l^{\prime} \leqslant_K l$.
The {\em supremum of $S$ w.r.t. to $\leqslant_K$} can be defined analogously.

\begin{remark}
  In particular, for $K := \mathbb{R}^{N}_{+}$ supremum of two always exists and satisfies
  \begin{equation}
    \sup\,\{x , y\} = \big[ \max\{x_1, y_1\}, \ldots, \max\{x_N, y_N\} \big],
  \end{equation}
  for $x, y \in \mathbb{R}^N$.
  Likewise, $\inf (x, y)$ exists and satisfies $\inf (x, y)_{i} = \min \left\{x_{i}, y_{i}\right\}$ for all $i = 1, \ldots, N$.
\end{remark}

When it is convenient, instead of the usual Euclidean norm, we convenient to consider in $\mathbb{R}^{N}$ the {\em uniform norm} (or the {\em supremum norm}) defined as $\|x\|_{\infty} := \max \left(\left|x_{1}\right|, \ldots, \left|x_{n}\right|\right)$.
This norm can be seen as a special case of {\em weighted maximum norm} (or {\em $v$-norm}) $\|\cdot\|_{v} \colon \mathbb{R}^{N} \to \mathbb{R}$ given by the formula
\begin{equation}
  \|x\|_{v} := \max_{i = 1, \ldots, N} \frac{\left|x_{i}\right|}{\left|v_{i}\right|}.
\end{equation}
where $0 \ll_{\mathbb{R}^N_{+}} v = \left(v_{1}, \ldots, v_{N}\right) \in \mathbb{R}^{N}$ is a given point \cite{feyzmahdavian2012contractive,zeng2001weighted,householder1964theory}.
\begin{remark}
  Note that for $v = {\bf 1}$, where ${\bf 1} := (1, \ldots, 1)$, we have usual sup norm $\|x\|_{\bf 1} = \|\cdot\|_{\infty}$.
\end{remark}
\begin{remark}
  Intuitively, this norm can be understood as follows: vector $v$ determines a point which, reflected by all axes of the coordinate system, will form the vertices of the cuboid.
  Then the number $\|w\|_v$, being the $v$-norm of the vector $w \in \mathbb{R}^N$, means how many times this cuboid should be enlarged so that $w$ lies on its side (facet).
  Compare Figure~\ref{fig.FG} in case of $\mathbb{R}^2$.
  \begin{figure}
    \begin{center}
      \includegraphics[width=0.9\columnwidth]{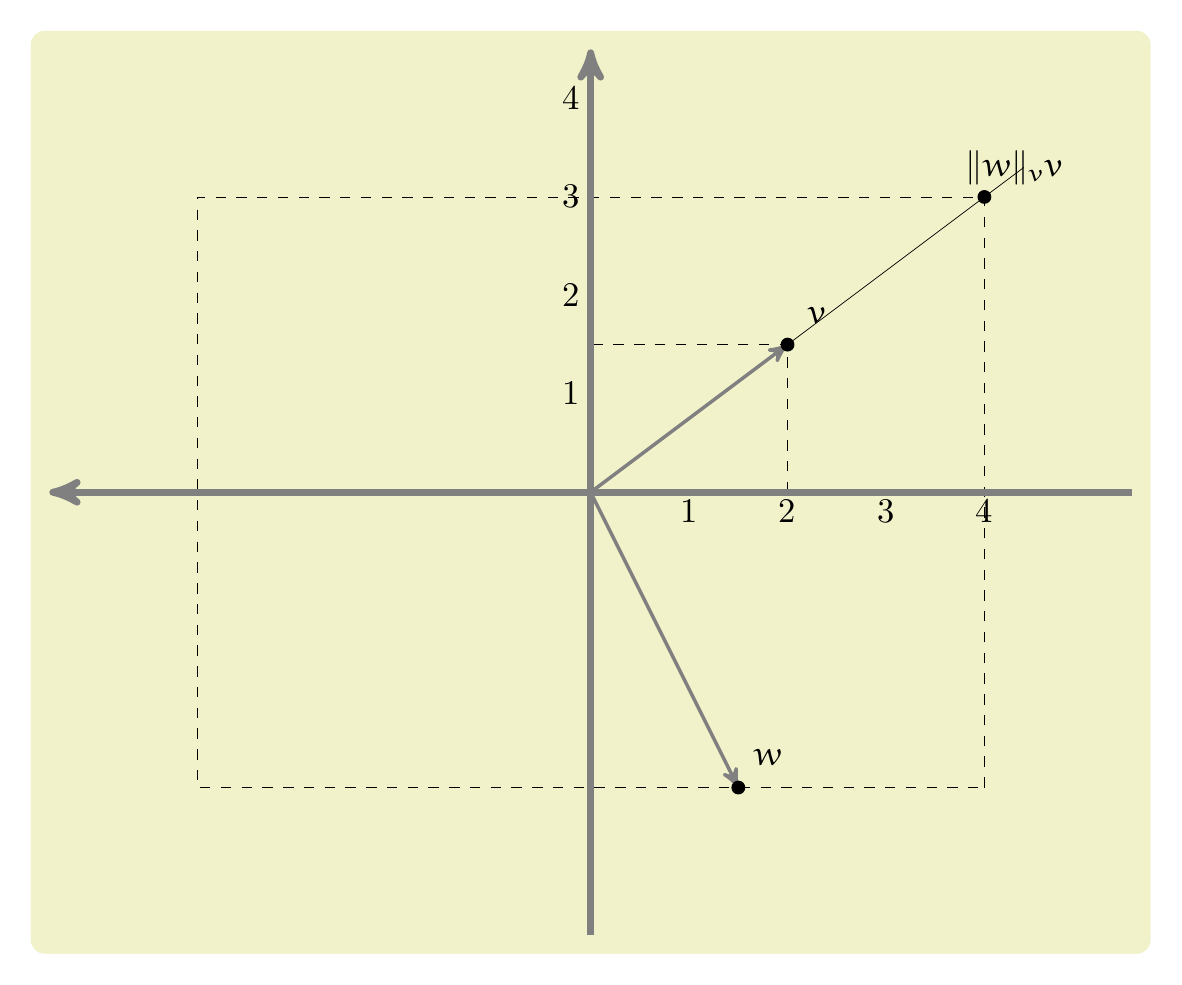}
      \caption{Intuitive meaning of weighted norm $\|w\|_v$.}
      \label{fig.FG}
    \end{center}
  \end{figure}
\end{remark}
To obtain our results, we also employ the following notions of monotonicity.
\begin{definition}
  Let $U \subseteq \mathbb{R}^{N}$. A mapping $f \colon U \to \mathbb{R}^{N}$ is called
  \begin{itemize}
    \item {\em monotone}\footnote{If we want to emphasize the cone, we say that $f$ is $K$-monotone.}, if $x <_K x^{\prime} \Rightarrow f(x) \leqslant_K f(x^{\prime})$,
    \item {\em strictly monotone}, if $x <_K x^{\prime} \Rightarrow f(x) <_K f(x^{\prime})$,
    \item {\em strongly monotone}, if $x <_K x^{\prime} \Rightarrow f(x) \ll_K f(x^{\prime})$ ($K$ need to be solid),
  \end{itemize}
  for all $x, x^{\prime} \in U$.
\end{definition}

\begin{remark}
  $K$-monotone mapping are sometimes called {\em order-preserving maps} \cite{Lemmens2012}.
\end{remark}

\begin{remark}
  A set $N \subset \mathbb{R}^{N}$ is called {\em invariant} under mapping $f$ if $f (N) \subset N$.
  It is easy to see, that if $f$ is $K$-monotone mapping it is invariant with respect to $K$, i.e. $f(K) \subset K$.
  %{\em Restriction} of $f$ to subset $N \subset \mathbb{R}^{N}$ is denoted by $f|_N$.
  %Moreover, if $N = K$ we can write $f_{+} := f|_{K}$.
\end{remark}

\begin{comment}
  \begin{remark}
    Note that in $\mathbb{R}$ implication
    \begin{equation}
      x < x^{\prime} \Rightarrow f(x) < f(x^{\prime})
    \end{equation}
    is equivalent to
    \begin{equation}
      x < x^{\prime} \Rightarrow \neg \big( f(x^{\prime}) \leqslant f(x) \big).
    \end{equation}
    Obviously this is not true in $\mathbb{R}^{N}$, but we can adopt a weaker form of monotonicity in the definition: we say that $f \colon U \to \mathbb{R}^{N}$ is
    \begin{itemize}
      \item {\it weakly monotone}, if $x <_K x^{\prime} \Rightarrow \neg \big( f(x^{\prime}) \leqslant_K f(x) \big)$, for all $x, x^{\prime} \in U$.
    \end{itemize}
  \end{remark}
\end{comment}

\begin{definition}
  Let $0 \ll_{\mathbb{R}^N_{+}} v \in \mathbb{R}^{N}$ be fixed.
  We can generalize the above defined concepts of monotonicity. Namely, we say that $f \colon U \to \mathbb{R}^{N}$ is
  \begin{itemize}
    \item {\it norm monotone} (or {\it $K$-norm monotone}), if
      \begin{equation}
	x <_K x^{\prime} \Rightarrow \|f(x)\|_v \leqslant \|f(x^{\prime})\|_v.
      \end{equation}
  \end{itemize}
\end{definition}
The drawback of this condition is that it depends on the choice of vector $v \in \mathbb{R}^{N}$.
The below-defined assumption also generates monotonicity, but is expressed only in partial order generated by $K$.
\begin{definition}
  We say that $f \colon U \to \mathbb{R}^{N}$ is
  \begin{itemize}
    \item {\it $K$-sup-monotone}, if
      \begin{equation}
	x <_K x^{\prime} \Rightarrow f(x) \leqslant_K \sup \big\{x^{\prime}, f(x^{\prime}) \big\},
      \end{equation}
    \item {\it strictly $K$-sup-monotone}, if
      \begin{equation}
	x <_K x^{\prime} \Rightarrow f(x) <_K \sup \big\{x^{\prime}, f(x^{\prime}) \big\},
      \end{equation}
    \item {\it strongly $K$-sup-monotone}, if $K$ is solid and
      \begin{equation}
	x <_K x^{\prime} \Rightarrow f(x)\ll_K \sup \{x^{\prime}, f(x^{\prime})\}.
	%\forall_{x, x^{\prime} \geqslant_K 0} \big( x <_K x^{\prime} \Rightarrow f(x)\ll_K \sup \{x^{\prime}, f(x^{\prime})\} \big).
      \end{equation}
  \end{itemize}
  for all $x, x^{\prime} \in U$.
\end{definition}

\begin{figure}
    \begin{center}
	\includegraphics[width=0.9\columnwidth]{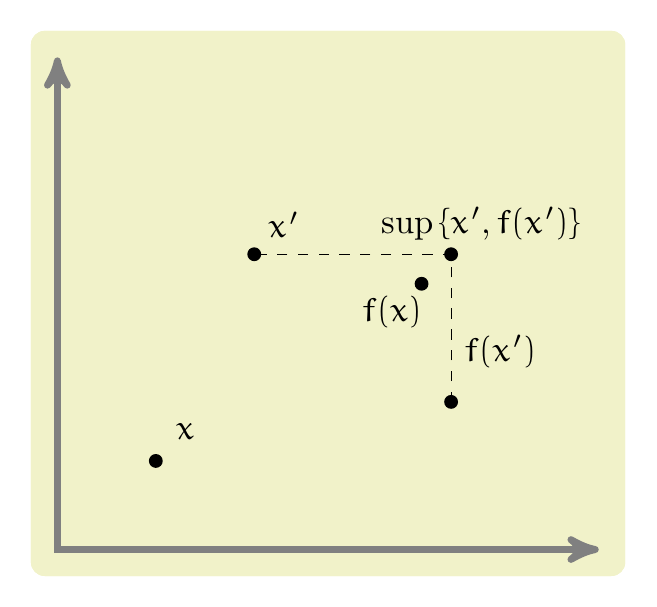}
	\caption{Intuitive meaning of sup-monotonicity.}
	\label{fig.FB}
    \end{center}
\end{figure}

\begin{remark}
  Of course, $K$-sup-monotonicity generalizes $K$-monotonicity, since if $f$ is $K$-monotone, then
  \begin{equation}
    x <_K x^{\prime} \Rightarrow f(x) \leqslant_K f(x^{\prime}) \leqslant_K \sup \big\{x^{\prime}, f(x^{\prime}) \big\}.
  \end{equation}
\end{remark}

\begin{remark}
  Let $f \colon \mathbb{R} \to \mathbb{R}$ be given by
  \begin{equation}
    f(x) := \sin(x) + x - 1.
  \end{equation}
  Then $f$ is not monotone, but is sup-monotone.
\end{remark}

%{\em Sign function} is given by formula
%\begin{equation}
%    \operatorname{sign}(x) := 
%    \begin{cases}
%        0, & \mbox{if } x \in \left[-\infty, 0\right), \\
%        1, & \mbox{if } x \in \left[0, +\infty\right).
%    \end{cases}
%\end{equation}
In the following theorem we present an introduction to the concept and properties of topological degree from an analytic viewpoint.

\begin{theorem}
  Let $\Omega\subset\mathbb{R}^{N}$ be a bounded and open subset of $\mathbb{R}^{N}$, $f \colon \overline{\Omega} \to \mathbb{R}^{N}$ be a continuous mapping, $y_0 \in \mathbb{R}^{N}$, such that $y_0 \not\in f(\partial\Omega)$.
  Then for $(f, \Omega, y_0)$ we can assign an integer $\operatorname{deg}(f, \Omega, y_0)$ in such a way that the following properties are satisfied:
  \begin{enumerate}
    \item Existence: if $\operatorname{deg} (f, \Omega, y_0) \neq 0$, then exists ${x \in \Omega}$, such that $f(x) = y_0$.
    \item Additivity: let $\Omega_i\subset\Omega, i = 1, \ldots, m$ be a finite family of disjoint subsets of a set $\Omega$, such that $y_0 \not\in f\big(\overline{\Omega} \setminus (\cup_{i = 1}^m \Omega_i)\big)$. Then
      \begin{equation}
	\operatorname{deg} (f, \Omega, y_0) = \sum_{i = 1}^m \operatorname{deg} (f, \Omega_i, y_0).
      \end{equation}
    \item Excision: if $K = \overline{K}\subset{\overline{\Omega}}$ and $y_0 \not\in f(K) \cup f(\partial\Omega)$, then
      \begin{equation}
	\operatorname{deg} (f, \Omega, y_0) = \operatorname{deg} (f, \Omega \setminus K, y_0).
      \end{equation}
    \item Homotopy Invariance: let $f_t: \overline{\Omega}\times I \to \mathbb{R}^{N}$ be a homotopy. Let $y$ will be a mapping of interval $I$ into $\mathbb{R}^{N}$.
      If for all ${t \in I}$ we have $y(t) \not\in f_t(\partial\Omega)$, then for all $t_1, t_2 \in I$ we have 
      \begin{equation}
	\operatorname{deg} \big(f_{t_1}, \Omega, y(t_1) \big) = \operatorname{deg} \big(f_{t_2}, \Omega, y(t_2)\big)
      \end{equation}
    \item Multiplicativity: let $\Omega_1\subset \mathbb{R}^{N}$ and $\Omega_2\subset \mathbb{R}^k$ are bounded and open sets, $y_1 \in \mathbb{R}^{N}, y_2 \in \mathbb{R}^k$ and $f_1 \colon \overline{\Omega_1} \to \mathbb{R}^{N}, f_2 \colon \overline{\Omega_2} \to \mathbb{R}^k$ are continuous mappings such that $y_1 \not\in f_1(\overline{\Omega_1}), y_2 \not\in f_2(\overline{\Omega_2})$. Then
      \begin{align*}
	\operatorname{deg} \big(f_1\times f_2, \Omega_1\times\Omega_2, &(y_1, y_2)\big) = \\
	&\operatorname{deg} (f_1, \Omega_1, y_1)\cdot \operatorname{deg} (f_2, \Omega_2, y_2)
      \end{align*}
    \item Units: Let $i \colon \overline {\Omega} \to \mathbb{R}^{N}$ be the inclusion. Then
      \begin{equation}
	\operatorname{deg} (i, \Omega, 0) = 1.
      \end{equation}
  \end{enumerate}
\end{theorem}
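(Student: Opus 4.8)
The statement is the axiomatic characterization of the Brouwer degree: it asserts that an integer-valued assignment $(f,\Omega,y_0)\mapsto \operatorname{deg}(f,\Omega,y_0)$ exists satisfying the six listed properties. Proving it therefore means \emph{constructing} such a function and then verifying each axiom. The plan is to build the degree in the standard three stages — smooth maps at regular values, then critical values, then continuous maps — and to read off the axioms at the end.

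First I would restrict to $f \in C^1(\overline{\Omega},\mathbb{R}^N)$ and to points $y_0$ that are \emph{regular values} of $f|_\Omega$, meaning $\det Df(x) \neq 0$ at every $x$ with $f(x)=y_0$. Because $y_0 \notin f(\partial\Omega)$ and $\overline{\Omega}$ is compact, the set $f^{-1}(y_0)$ is a compact subset of the \emph{open} set $\Omega$; the inverse function theorem makes each of its points isolated, so the set is finite, say $\{x_1,\dots,x_k\}$. I would then define
\[
  \operatorname{deg}(f,\Omega,y_0) := \sum_{j=1}^{k} \sign \det Df(x_j),
\]
with the empty sum read as $0$. To prove this number is stable, I would pass to the integral representation
\[
  \operatorname{deg}(f,\Omega,y_0) = \int_{\Omega} \varphi_\varepsilon\big(f(x)-y_0\big)\,\det Df(x)\,dx,
\]
where $\varphi_\varepsilon$ is a smooth bump supported in $\mathcal{B}(0,\varepsilon)$ with total mass $1$ and $\varepsilon$ is small enough that $\mathcal{B}(y_0,\varepsilon)\cap f(\partial\Omega)=\emptyset$.

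The heart of the argument — and the step I expect to be the main obstacle — is showing this integral is independent of $\varepsilon$ and, more generally, invariant along a smooth homotopy $f_t$ with $y(t)\notin f_t(\partial\Omega)$. The standard device is to prove that the $t$-derivative of the integrand equals a spatial divergence $\operatorname{div}_x V_t$ of a vector field $V_t$ supported away from $\partial\Omega$; the divergence theorem then forces $\tfrac{d}{dt}\operatorname{deg}=0$. Establishing that exactness identity (via a cofactor/Piola-type computation showing the relevant adjugate rows are divergence-free) is the genuinely technical part; everything downstream is comparatively routine. Having this, I would extend the definition to a critical value $y_0\notin f(\partial\Omega)$ by Sard's theorem: regular values are dense, $y_0$ has a whole ball missing $f(\partial\Omega)$ (a connected set on which the degree is locally constant), so I set $\operatorname{deg}(f,\Omega,y_0)$ equal to the common value at nearby regular points. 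For merely continuous $f$ I would approximate uniformly by $C^1$ maps (Stone--Weierstrass); any two approximants within $\tfrac{1}{2}\operatorname{dist}(y_0,f(\partial\Omega))$ of $f$ are joined by a straight-line homotopy avoiding $y_0$ on $\partial\Omega$, so they share one degree, which I declare to be $\operatorname{deg}(f,\Omega,y_0)$.

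Finally I would verify the axioms, all now easy. Homotopy Invariance is built into the construction. Existence is the contrapositive of the empty-sum case: if $y_0\notin f(\overline{\Omega})$ then $\varphi_\varepsilon(f(\cdot)-y_0)\equiv 0$ on $\Omega$, so the degree is $0$; hence a nonzero degree forces a solution. Additivity and Excision are local statements — the signed count (equivalently the integral) only sees neighborhoods of the finitely many preimage points, so regrouping $\Omega$ into subsets that jointly contain $f^{-1}(y_0)$ merely partitions the same sum. Units is immediate, since the inclusion has $Df\equiv\mathrm{Id}$, a unique preimage of $0$, and positive Jacobian, giving degree $1$. Multiplicativity follows from $(f_1\times f_2)^{-1}(y_1,y_2)=f_1^{-1}(y_1)\times f_2^{-1}(y_2)$ together with the block-diagonal form of the Jacobian, whence $\det D(f_1\times f_2)=\det Df_1\cdot\det Df_2$ and the signed counts multiply.
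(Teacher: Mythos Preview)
The paper does not actually prove this theorem: it is stated in the Introduction as background material (``we present an introduction to the concept and properties of topological degree from an analytic viewpoint'') and no proof is given anywhere in the paper. It is simply quoted as a standard result from degree theory, to be used later as a tool in the proofs of Theorems~\ref{theorem:degreerzero}, \ref{thm5}, \ref{thm6}, etc.

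Your sketch is the standard analytic construction of the Brouwer degree (Heinz's integral formula, Sard's theorem, then uniform approximation of continuous maps by $C^1$ maps) and is correct in outline. A couple of small points you should be careful about if you flesh it out: for the Units axiom you need $0\in\Omega$, which the paper tacitly assumes; and in the Multiplicativity axiom the paper's hypothesis ``$y_1\notin f_1(\overline{\Omega_1})$'' is a misprint for $y_1\notin f_1(\partial\Omega_1)$ --- as written the degrees on the right-hand side would all vanish and the statement would be trivial. Since there is no proof in the paper to compare with, there is nothing further to contrast.
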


\section{Motivation}

Let $K := \mathbb{R}_{+}^{N}$. In his seminal paper \cite{yates1995framework} Yates introduces definition of {\em interference function} $f \colon K \to \mathbb{R}^N$ which satisfies:
\begin{enumerate}
  \item {\em positivity}, i.e. $f(x) \geqslant_K 0$, for $x \geqslant_K 0$,
  \item {\em monotonicity}, i.e. if $x^{\prime} \geqslant_K x$, then $f(x^{\prime}) \geqslant_K f(x)$,
  \item {\em strict scalability}, i.e. for real number $\alpha > 1$ and $x \in K$ we have $\alpha f(x) >_K f(\alpha x)$.
\end{enumerate}
\begin{remark}
  In order to regulate interference caused by other users several models have been considered.
  Yates showed that the uplink power control problem can be reduced to finding a unique fixed point at which total transmitted power is minimized. 
\end{remark}
Let us briefly comment the assumptions. 
\begin{remark}
  Positivity means that in fact function $f$ maps $K$ into itself, i.e. $f \colon K \to K$.
\end{remark}
\begin{remark}
  Interference function have at most one fixed point, e.g. $f \colon \mathbb{R}_+ \to \mathbb{R}_+$ defined by $f(x) = x + 1$ is interference function, but does not have a fixed point.
  In order to obtain a fixed point some new condition is needed. %Many papers for that use feasibility defined in the theorem below.
\end{remark}
The main convergence result for standard interference functions \cite{feyzmahdavian2012contractive} can be summarized as follows:
\begin{theorem}
  Let $f \colon \mathbb{R}_{+}^{N} \to \mathbb{R}_{+}^{N}$ be a standard interference function and consider the iteration schema
  \begin{equation}
    x^{k+1} = f(x^{k}),
    \label{eq:yates}
  \end{equation}
  with $k$ as an index of the iteration.

  If $f$ is feasible,
  (i.e. there exists at least one point $x$ such that $f(x) \leqslant_K x$),
  then the iterates $\{x^{k}\}_{k \geq 0}$ produced by this iteration schema converge to the unique fixed-point from any initial vector $x^{0} \in \mathbb{R}_{+}^{N}$.
\end{theorem}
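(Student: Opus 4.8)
The plan is to assemble the result from three standard ingredients, in which \emph{scalability} is responsible for uniqueness while \emph{monotonicity} together with \emph{positivity} drives convergence. Throughout I use that a standard interference function is continuous (this itself follows from monotonicity and scalability), so that the limit of any convergent sequence of iterates is automatically a fixed point of $f$.

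\emph{Uniqueness.} First I would show that $f$ has at most one fixed point. Suppose $x^{*}$ and $y^{*}$ are fixed points, both with strictly positive coordinates, and set $\alpha := \max_{i} x^{*}_{i}/y^{*}_{i}$; after possibly interchanging the two points we may assume $\alpha \geqslant 1$, and by construction $x^{*} \leqslant_{K} \alpha y^{*}$ with equality in the coordinate $i_{0}$ attaining the maximum. If $\alpha > 1$, then monotonicity gives $x^{*} = f(x^{*}) \leqslant_{K} f(\alpha y^{*})$, while scalability gives $f(\alpha y^{*}) <_{K} \alpha f(y^{*}) = \alpha y^{*}$; chaining these yields $x^{*}_{i_{0}} < \alpha y^{*}_{i_{0}}$, contradicting the choice of $i_{0}$. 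Hence $\alpha = 1$, so $x^{*} \leqslant_{K} y^{*}$, and by symmetry $x^{*} = y^{*}$.

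\emph{Monotone convergence from the extreme starting points.} Next I would analyse two specific initial vectors. Starting from a feasible point $\hat{x}$ (so $f(\hat{x}) \leqslant_{K} \hat{x}$), monotonicity propagates the inequality, giving $x^{k+1} = f(x^{k}) \leqslant_{K} x^{k}$ for every $k$, so the iterates decrease coordinatewise and stay above $0$ by positivity; a bounded monotone sequence in $\mathbb{R}^{N}$ converges, and continuity forces the limit to be a fixed point $x^{*}$, which also settles existence. Starting instead from $x^{0} = 0$ one has $f(0) \geqslant_{K} 0$, so the iterates increase; since $0 \leqslant_{K} x^{*}$ implies $x^{k} \leqslant_{K} x^{*}$ for all $k$ by induction and monotonicity, the sequence is bounded above and converges, necessarily to $x^{*}$ by uniqueness.

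\emph{General starting point via sandwiching.} Finally, for an arbitrary $x^{0} \geqslant_{K} 0$ I would trap the iteration between the two controlled sequences. The key observation is that scaling preserves feasibility: for $\alpha \geqslant 1$ one has $f(\alpha \hat{x}) <_{K} \alpha f(\hat{x}) \leqslant_{K} \alpha \hat{x}$, so $\alpha \hat{x}$ is again feasible, and choosing $\alpha$ large enough (using that $\hat{x}$ has positive coordinates) yields $0 \leqslant_{K} x^{0} \leqslant_{K} \alpha \hat{x}$. Monotonicity preserves this ordering under iteration, so the iterates from $x^{0}$ are squeezed between the increasing sequence from $0$ and the decreasing sequence from $\alpha \hat{x}$, both tending to $x^{*}$; hence the iterates from $x^{0}$ converge to $x^{*}$ as well. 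I expect the uniqueness step to be the main obstacle, since it is the only place where scalability is genuinely exploited and must be applied at the extremal index $i_{0}$; a secondary technical point is producing a feasible vector dominating a prescribed $x^{0}$, which relies on the positivity of the coordinates of a feasible (equivalently, fixed) point.
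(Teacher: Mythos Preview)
The paper does not supply its own proof of this theorem: it is stated as a summary of the classical convergence result for standard interference functions, with a citation to \cite{feyzmahdavian2012contractive}, and no argument is given. So there is nothing to compare your proposal against on the paper's side.

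Your argument is the standard Yates-type proof and is essentially correct. A couple of points worth tightening. First, you assert that continuity of $f$ ``follows from monotonicity and scalability''; this is true but is itself a nontrivial lemma, and since you rely on it to pass to the limit you should either prove it or cite it explicitly. Second, both the uniqueness step (defining $\alpha=\max_i x^{*}_i/y^{*}_i$) and the sandwiching step (scaling $\hat{x}$ to dominate $x^{0}$) require that the relevant vectors have \emph{all} coordinates strictly positive. Under the paper's stated positivity axiom $f(x)\geqslant_K 0$ this is not automatic; the classical Yates setting assumes $f(x)\gg 0$, from which $\hat{x}\geqslant_K f(\hat{x})\gg 0$ and $x^{*}=f(x^{*})\gg 0$ follow immediately. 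You flag this as a ``secondary technical point'', but it is exactly where the positivity hypothesis does real work, so make the deduction explicit.
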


For the purposes of this paper we can extend the concept of scalability.
\begin{definition}
  We say that $f \colon K \to K$ is
  \begin{itemize}
    \item {\em scalable}\footnote{We sometimes write $K$-scalable, in order to refer to a specific cone.
      In the literature, this concept is also known as {\em weak scalability} \cite{piotrowski2021fixed}.}, if for real number $\alpha > 1$ and $x \in K$ we have $f(\alpha x) \leqslant_K \alpha f(x)$,
    \item {\em strictly scalable}, if for real number $\alpha > 1$ and $x \in K$ we have $f(\alpha x) <_K \alpha f(x)$,
    \item {\em strongly scalable}, if $K$ is solid and for real number $\alpha > 1$ and $x \in K$ we have $f(\alpha x) \ll_K \alpha f(x)$.
  \end{itemize}
\end{definition}

%We consider a {\em cone scalability}:
%\begin{definition}
%A map $f \colon \mathbb{R}^{N}_{+} \to \mathbb{R}^k_{+}$ is {\em $K$-scalable}, if
%\begin{equation}\label{scal}
%f(\alpha x)\ll_K \alpha f(x) \ \ \mbox{ for every } x \in \mathbb{R}^{N}_{+} \mbox{ and } \alpha > 1.
%\end{equation}
%\end{definition}

%Note that scalable maps are also called {\em subhomogeneous}, in the sense that
%\begin{equation}\label{subhom}
%\theta f(x)\ll_K f(\theta x) \ \ \mbox{ for every } x \in \mathbb{R}^{N}_{+} \mbox{ and } \theta \in (0, 1).
%\end{equation}
\begin{remark}
  %TODO: rozne znaczki <_K, \ll_K w poprzedniku implikacji
  Let $K := \mathbb{R}_{+}^{N}$.
  The notion of subhomogenity may be found frequently in the mathematical literature \cite[Section 1.4]{Lemmens2012}.
  A self-mapping $f \colon K \to K$ is called \cite{nockowska2022monotonicity}
  \begin{itemize}
    \item {\em subhomogeneous}, if $0 <_K x \Rightarrow \theta f(x) \leqslant_K f(\theta x)$,
    \item {\em strictly subhomogeneous}, if $0 <_K x \Rightarrow \theta f(x) <_K f(\theta x)$,
    \item {\em strongly subhomogeneous}, if $0 \ll_K x \Rightarrow \theta f(x) \ll_K f(\theta x)$,
  \end{itemize}
  for all $x \in K$ and $\theta \in (0, 1)$.
  %TODO CZY NA PEWNO??? Chodzi o to, ze czasami jest << a czasami <= a gdzie indziej <.
  It is easily to see, that (strictly, strongly) subhomogeneous mapping is (strictly, strongly) scalable mapping. %\footnote{Of course, the concepts of strictly and  strongly scalable mapping would have to be defined assuming the appropriate inequality in $\alpha f(x) > f(\alpha x)$.}.

  Indeed, these two conditions are equivalent, since, e.g., if $f$ is subhomogeneous and $\alpha > 1$, then $\frac{1}{\alpha} f(\alpha x) \ll_K f(x)$, which implies that $f(\alpha x) \ll_K \alpha f(x)$.
\end{remark}

\begin{remark}
  If $f$ is strictly scalable, then $f(0) = f(\alpha \cdot 0) <_K \alpha f(0)$, but $\alpha > 1$, so $f(0) >_K 0$.
  %Condition~\eqref{scal} also implies that $0\ll_K f(0)$, since $f(0) = f(\alpha\cdot 0)\ll_K \alpha f(0)$, for every $\alpha > 1$.
\end{remark}

\begin{remark}\label{rem:fscalabledesceding}
  Let $f \colon K \to K$ be $K$-scalable, then for each $\alpha_{2} \geq \alpha_{1} > 1$ we have
  \begin{equation}
    \frac{1}{\alpha_{1}} f\left(\alpha_{1} x\right) \geqslant_K \frac{1}{\alpha_{2}} f\left(\alpha_{2} x\right).
  \end{equation}
  Similar inequalities can be obtained when we assume strict or strong scalability.
\end{remark}

\begin{remark}
  Scalability can be seen as a weak growth condition. Let
  \begin{equation}
    f(x) := (1 + \|x\|^{\beta}) x,
  \end{equation}
  where $\beta < 0$. Then
  \begin{equation}
    f(\alpha x) < \alpha f(x).
  \end{equation}
  In fact all concave functions are interference functions \cite[Proposition 1]{cavalcante2015elementary}.
\end{remark}

\begin{remark}
  If we apply Yates' definition of interference function to any cone $K$, then we will call such a function {\em $K$-interference function}.
\end{remark}

In \cite{piotrowski2021fixed}, the authors assumed that interference mapping can be approximated via neural network.
Such an assumption is quite convenient because closed analytical form of the map is given, and therefore it is easier to study it.
For example, we can ask what should be the conditions for weights and biases to make it scalable and monotone.

\subsection{Key concepts of neural networks}

Assume that $W \colon \mathbb{R}^{N_1} \to \mathbb{R}^{N_2}$ be a linear operator and $b \in \mathbb{R}^{N_1}$.
Let $\sigma \colon \mathbb{R}^{N_2} \to \mathbb{R}^{N_2}$ be any activation function of neural network.
Define {\em layer operator} $f \colon \mathbb{R}^{N_1} \to \mathbb{R}^{N_2}$ to be given by the formula $f(x) := \sigma(Wx + b)$.
The operation of a layer operator of a neural network can be interpreted as affine transformation of a domain of activation function.

\begin{assumption} \label{assum2}
  Let $W_i \colon \mathbb{R}^{N_{i-1}} \to \mathbb{R}^{N_{i}}$, $i = 1, \ldots, m$, be bounded linear operators and $b_i \in \mathbb{R}^{N_{i}}$, $i = 1, \ldots, m$.
  Assume that $\sigma_{i} \colon \mathbb{R}^{N_{i}} \to \mathbb{R}^{N_{i}}$, $i = 1, \ldots, m$, are continuous.
  Moreover, let us define $f_{i} \colon \mathbb{R}^{N_{i-1}} \to \mathbb{R}^{N_{i}}$ by the formula
  \begin{equation}
    f_{i}(x_{i-1}) := \sigma_{i} (W_{i} x_{i-1} + b_{i}),
  \end{equation}
  for $x_{i-1} \in \mathbb{R}^{N_{i-1}}, i = 1, \ldots, m$.
\end{assumption}
Layer operators introduced in the above assumption are building blocks of most of the neural networks used in applications.

\begin{definition}
  {\em Neural network} $f \colon \mathbb{R}^{N_0} \to \mathbb{R}^{N_m}$ is (by the very definition) a composition of layers $f_i$, i.e.
  \begin{equation}\label{eq:nn}
    f := f_m \circ \cdots \circ f_1 \colon \mathbb{R}^{N_{0}} \to \mathbb{R}^{N_{m}}.
  \end{equation}
\end{definition}

\begin{example}

  Consider neural network layer given by $f(x) = \sigma(Wx + b)$, where $\sigma: = (\sigma_1, \sigma_2)$, $\sigma_i$ is the {\em unimodal sigmoid activation function}
  \begin{equation}
    \sigma_i \colon \mathbb{R} \rightarrow \mathbb{R} \colon \xi \mapsto \frac{1}{1+e^{-\xi}}-\frac{1}{2}, \quad i = 1, 2,
  \end{equation}
  and
  \begin{equation}
    W :=
    \begin{bmatrix}
      0.95 & -0.05\\-0.05 & 0.95
    \end{bmatrix},
    \quad
    b :=
    \begin{bmatrix}
      1 \\
      -0.05 \\
    \end{bmatrix}
  \end{equation}
  Let input training data be $D := \{(0, 1), (1, 0.2)\}$. Then $f(d) >_K 0$, for $d \in D$, but, obviously, this mapping is not nonnegative.

  From this point of view, even if the data are positive, assumption that weights are positive does not have to be correct (see \cite{chorowski2014learning,ayinde2017deep} for similar remarks).

  In the above example we can not use theorem for existence of fixed points found in \cite{piotrowski2021fixed}, since the weights may be negative.

  Moreover, there is no Cybenko type theorem, which states, that positive mappings can be approximated by neural networks with positive weights.

  But $f$ leaves the following cone
  \begin{equation}
    K := C\big([1,1], \cos(\arctan(10/9)) \big) \supset \mathbb{R}^N_+. % A moze cos(arctan(10/9)) ??? %TODO
  \end{equation}
  invariant, so it is $K$-monotone. Moreover, $b \in K$.

\end{example}

\section{Weakening the monotonicity assumptions of interference mappings}

Although the assumption of monotonicity looks quite natural, when we check carefully the original paper, it turns out that it was introduced quite artificially.
Therefore, several authors have tried to weaken this assumption \cite{piotrowski2021fixed}.

% TODO
%We will show in the paper that when using a topological approach for existence of a fixed point, it is not be necessary to assume scalability.
%%Scalability is not needed for existence of a fixed point.

We will use the following assumptions throughout the paper: we assume $K \subset \mathbb{R}^{N}$ is a cone such that the {\em angle of opening of cone $K$} $\alpha_K$ given by
\begin{LaTeXdescription}
  \item[(A.1)]\label{lab:A1}
    \begin{equation}
      \alpha_K := \inf_{\beta \geq 0} \{ K \subset C\big(w, \cos(\beta) \big) \}
    \end{equation}
    satisfies $\alpha_K < \pi/2$.
\end{LaTeXdescription}
Moreover, assume
\begin{LaTeXdescription}
  \item[(f.1)]\label{lab:f1} mapping $f \colon \mathbb{R}^{N} \to \mathbb{R}^{N}$ is continuous,
  %\item[(f.2)]\label{lab:f2} that mapping $f \colon \mathbb{R}^{N} \to \mathbb{R}^{N}$ is $K$-monotonic, i.e. satisfies $x^{\prime} \geqslant_K x \Rightarrow f\left(x^{\prime}\right) \geqslant_K f(x)$.
\end{LaTeXdescription}
From now on we assume Condition~\ref{lab:f1}, even if we do not write it explicitly.

\begin{remark}\label{remark:Sbounded}
  First, we note that if we introduce norm in $\mathbb{R}^{N}$ by $\|\cdot\| := \|\cdot\|_{\infty}$, then the condition
  \begin{LaTeXdescription}
    \item[(S.1)]\label{lab:S1} $\mathcal{S}_{f} := \{x \geqslant_K 0 \mid f(x) \leqslant_K x\}$ is bounded,
  \end{LaTeXdescription}
  implies that for some closed ball $D_R := \mathcal{D}(0, R)$ (with a correspondingly large $R > 0$) we have $f(x) \notin D_R$, for every $x$ satisfying $\|x\| = R$.
  Indeed, if not then for all $R > 0$ there would exists $x$, such that $\|x\| = R$ and $f(x) \in D_R$, i.e. $x \in \mathcal{S}_{f}$, which contradicts Assumption~\ref{lab:S1}.
  % It means that solution search area can be localized.
\end{remark}
Feasibility condition from paper~\cite{yates1995framework} means set $\mathcal{S}_{f}$ is nonempty.
Let us strengthen this assumption a bit:
%\begin{LaTeXdescription}
%  \item[(S.2)]\label{lab:S2} set $\mathcal{S}_{f}$ is nonempty.
%\end{LaTeXdescription}
%Let us strengthen this assumption a bit:
%\begin{LaTeXdescription}
%  \item[(S.2')]\label{lab:S2prime} there exists $x^{\prime} \in \mathcal{S}_{f}$ such that $x^{\prime} \gg_K 0$.
%\end{LaTeXdescription}
%So we require not only the existence but also strictly positive values of the feasible point.
\begin{definition}
  We say that point $x \in \mathbb{R}^N$ is:
  \begin{itemize}
    \item {\em feasible} for $f$, if $x \in \mathcal{S}_{f}$ and $x \geqslant_K 0$,
    \item {\em strictly feasible} for $f$, if $x \in \mathcal{S}_{f}$ and $x >_K 0$,
    \item {\em strongly feasible} for $f$, if $x \in \mathcal{S}_{f}$ and $x \gg_K 0$.
  \end{itemize}
\end{definition}

\begin{remark}
  In other terms, we can say that $f$ is (strictly, strongly) feasible, if there exists a point (strictly, strongly) feasible for $f$.
\end{remark}

\section{Uniqueness of fixed points}

In this section, assuming the existence of a fixed point and scalability of the function, we will show that there can only be one fixed point.

\begin{theorem}\label{thm:uniquenessthm}
  Assume that $f \colon K \to \mathbb{R}^{N}$ is $K$-scalable and strongly $K$-sup-monotone. Then, if $f$ has a fixed point, then it is unique.
\end{theorem}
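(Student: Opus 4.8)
The plan is to argue by contradiction. Suppose $x^{*}$ and $y^{*}$ are two distinct fixed points of $f$; throughout I use that a fixed point lies in $K$ (the scalability setup presumes $f(K)\subseteq K$) and the elementary fact that for a convex cone $\operatorname{int} K + K \subseteq \operatorname{int} K$. The first step is to show that every nonzero fixed point lies in $\operatorname{int} K$. Indeed, if $x^{*}\neq 0$ is fixed, then $w := \tfrac12 x^{*}$ satisfies $w <_{K} x^{*}$, and since $x^{*}$ is fixed we have $\sup\{x^{*}, f(x^{*})\} = x^{*}$; strong $K$-sup-monotonicity then yields $f(w) \ll_{K} x^{*}$, i.e. $x^{*} - f(w) \in \operatorname{int} K$. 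Writing $x^{*} = (x^{*} - f(w)) + f(w) \in \operatorname{int} K + K \subseteq \operatorname{int} K$ gives the claim (the possibility that a fixed point equals $0$ is handled as a separate, easy case). In particular $y^{*} \in \operatorname{int} K$.

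Because $y^{*} \in \operatorname{int} K$, the set $\{\beta > 0 \suchthat x^{*} \leqslant_{K} \beta y^{*}\}$ is nonempty, bounded below by a positive number, and closed (as $K$ is closed), so $\beta^{*} := \inf\{\beta > 0 \suchthat x^{*} \leqslant_{K} \beta y^{*}\}$ is attained and $\beta^{*} y^{*} - x^{*} \in \partial K$ (otherwise a strictly smaller $\beta$ would still satisfy the inequality). Now suppose $x^{*} \neq \beta^{*} y^{*}$, so that $x^{*} <_{K} \beta^{*} y^{*}$. Scalability gives $f(\beta^{*} y^{*}) \leqslant_{K} \beta^{*} f(y^{*}) = \beta^{*} y^{*}$, hence $\sup\{\beta^{*} y^{*}, f(\beta^{*} y^{*})\} = \beta^{*} y^{*}$, and strong $K$-sup-monotonicity applied to the pair $x^{*} <_{K} \beta^{*} y^{*}$ yields $x^{*} = f(x^{*}) \ll_{K} \beta^{*} y^{*}$, contradicting $\beta^{*} y^{*} - x^{*} \in \partial K$. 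Thus any two distinct fixed points must be collinear: $x^{*} = \beta^{*} y^{*}$ with $\beta^{*} \neq 1$, and after interchanging $x^{*}$ and $y^{*}$ if necessary I may assume $\beta^{*} > 1$.

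It therefore remains to exclude collinear fixed points with ratio $\beta^{*} > 1$, and I expect this to be the main obstacle. By Remark~\ref{rem:fscalabledesceding} the map $\alpha \mapsto \tfrac1\alpha f(\alpha y^{*})$ is non-increasing with respect to $\leqslant_{K}$ on $(1,\beta^{*}]$. Its endpoint values both equal $y^{*}$, since $\tfrac1{\beta^{*}} f(\beta^{*} y^{*}) = \tfrac1{\beta^{*}} x^{*} = y^{*}$ while $\tfrac1\alpha f(\alpha y^{*}) \to f(y^{*}) = y^{*}$ as $\alpha \to 1^{+}$. Sandwiching $y^{*} \geqslant_{K} \tfrac1\alpha f(\alpha y^{*}) \geqslant_{K} y^{*}$ for $\alpha \in (1,\beta^{*})$ and invoking antisymmetry of $\leqslant_{K}$ (i.e. $K\cap(-K)=\{0\}$) forces $f(\alpha y^{*}) = \alpha y^{*}$ for every $\alpha \in [1,\beta^{*}]$: the entire interior segment $[y^{*},\,\beta^{*}y^{*}]$ is pointwise fixed.

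The delicate point is that \emph{weak} scalability alone is consistent with such a fixed segment, so the final contradiction must be squeezed out of the strict information available. If $f$ were strictly scalable the matter would end at once, since $x^{*} = f(\beta^{*} y^{*}) <_{K} \beta^{*} f(y^{*}) = \beta^{*} y^{*} = x^{*}$ is impossible; under merely weak scalability one must instead extract the required strictness from strong $K$-sup-monotonicity, showing that an entire interior segment of fixed points cannot coexist with the strict interior conclusion $f(x) \ll_{K} \sup\{x', f(x')\}$ imposed on all comparable pairs. This collinear step, where the interplay between scalability and the \emph{strong} form of sup-monotonicity is genuinely essential, is the crux of the argument; the non-collinear reduction carried out above is comparatively routine.
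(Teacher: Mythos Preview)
Your non-collinear argument is essentially the paper's entire proof. The paper also chooses the minimal $\alpha>1$ with $p'\leqslant_K\alpha p$, notes that $\alpha p\in\partial(p'+K)$, uses $K$-scalability to get $f(\alpha p)\leqslant_K\alpha f(p)=\alpha p$ so that $\sup\{\alpha p,f(\alpha p)\}=\alpha p$, and then invokes strong $K$-sup-monotonicity to conclude $p'=f(p')\ll_K\alpha p$, contradicting minimality of $\alpha$. Your write-up is in fact more careful than the paper's: your first step (nonzero fixed points lie in $\operatorname{int}K$) supplies the justification---absent from the paper---for why such a minimal scaling factor exists at all. One small ordering slip: when you invoke scalability as $f(\beta^{*}y^{*})\leqslant_K\beta^{*}f(y^{*})$ you are already using $\beta^{*}\geq 1$, which you only arrange by a swap \emph{after} that line; perform the swap first.

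On the collinear case you have put your finger on a genuine gap, and it is one the paper's own proof shares: the paper's application of strong sup-monotonicity silently presumes $p'<_K\alpha p$, which fails precisely when $p'=\alpha p$, i.e.\ when the two fixed points are positive multiples of one another. Your hope that strong $K$-sup-monotonicity will supply the missing strictness is unfortunately not borne out. In dimension one take $K=\mathbb{R}_{+}$ and $f=\operatorname{id}$: then $f(\alpha x)=\alpha x\leqslant_K\alpha f(x)$, and since on $\mathbb{R}$ the relations $<_K$ and $\ll_K$ coincide, the implication $x<_K x'\Rightarrow f(x)=x\ll_K x'=\sup\{x',f(x')\}$ holds trivially; yet every point of $K$ is fixed. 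Thus under the hypotheses as stated the collinear obstruction is real, and neither your argument nor the paper's can be completed without an additional assumption (strict scalability, for instance, which you correctly observe would finish the proof in one line) that actually separates collinear fixed points.
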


\begin{proof}
  %\noindent {\em Proof.}
  Suppose there are two different fixed points $p, p^{\prime} \in \mathbb{R}^{N}_{+}$. Then the cones $p + K$ and $p^{\prime} + K$ are also different.
  Assume that $p^{\prime} + K \subset p + K$ (the situation $p + K \subset p^{\prime} + K$ is analogous) or both the sets $(p + K) \setminus (p^{\prime} + K)$ and $(p^{\prime} + K) \setminus (p + K)$ are nonempty.
  Then there is $\alpha > 1$ such that $p^{\prime} \leqslant_K \alpha p$ and $\alpha p \in \partial (p^{\prime} + K)$, that is, $\alpha$ is the smallest number satisfying $p^{\prime} \leqslant_K \alpha p$ (see Figure~\ref{fig.FD}).

  \begin{figure}
    \begin{center}
      \includegraphics[width=0.9\columnwidth]{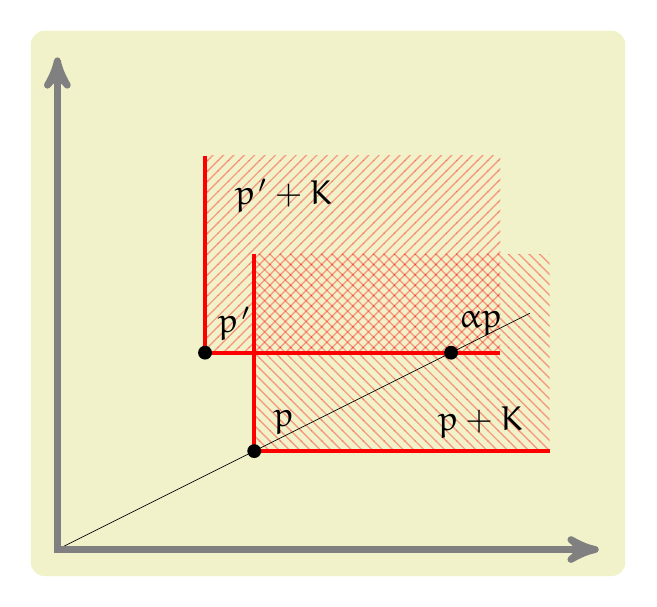}
      \caption{Existence of $\alpha$ in the proof of Theorem~\ref{thm:uniquenessthm}.}
      \label{fig.FD}
    \end{center}
  \end{figure}

  Then
  \begin{equation}
    p^{\prime} = f(p^{\prime}) \ll_K \sup \{f(\alpha p), \alpha p\} \leqslant_K \sup \{\alpha p, \alpha p\} = \alpha p,
  \end{equation}
  which contradicts the choice of $\alpha$. The proof is complete.
\end{proof}

\section{Convergence to a fixed point}

We now study existence of fixed points assuming scalability of the mapping.

%\begin{definition}
%  Assume that we have a map $f \colon K \to \mathbb{R}^{N}$. A point $p \in \mathbb{R}^{N}$ is said to be {\em feasible}, if $f(p) \leqslant_K p$.
%\end{definition}
Feasible point are good for as initial points for iteration schema to obtain a fixed point.

\begin{lemma}\label{lem:boundedintersection}
  If $K \subset \mathbb{R}^{N}$ is a solid cone, then for every $x \leqslant_K y$ set
  \begin{equation}
    (x + K) \cap (y - K)
  \end{equation}
  is bounded.
\end{lemma}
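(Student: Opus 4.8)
The plan is to exploit pointedness of $K$ to produce a single linear functional that is uniformly positive on $K$, and then to use it to bound the order interval $(x+K)\cap(y-K)$, which is exactly the set $\{z : x \leqslant_K z \leqslant_K y\}$.

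First I would construct $\phi \in \mathbb{R}^{N}$ and $\delta > 0$ such that $\langle \phi, k\rangle \geq \delta\|k\|$ for every $k \in K$. The set $C := K \cap \{k : \|k\| = 1\}$ is compact, so its convex hull $\operatorname{conv} C$ is a compact convex set. I claim $0 \notin \operatorname{conv} C$: a representation $0 = \sum_i \lambda_i k_i$ with $\lambda_i > 0$, $\sum_i \lambda_i = 1$ and $k_i \in C$ would force $k_1 = -\frac{1}{\lambda_1}\sum_{i\geq 2}\lambda_i k_i$, so that both $k_1 \in K$ and $-k_1 \in K$, i.e. $k_1 \in K \cap (-K) = \{0\}$, contradicting $\|k_1\| = 1$. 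By the separating hyperplane theorem there is then $\phi$ with $\langle \phi, c\rangle \geq \delta > 0$ for all $c \in \operatorname{conv} C$, hence for all $c \in C$; since $K$ is a cone, scaling upgrades this to $\langle \phi, k\rangle \geq \delta\|k\|$ on all of $K$. Under the standing condition (A.1) this step is even immediate: $K \subset C\big(w, \cos(\alpha_K)\big)$ with $\alpha_K < \pi/2$ gives $\langle w, k\rangle \geq \cos(\alpha_K)\|w\|\,\|k\|$ for $k \in K$, so one may simply take $\phi = w$ and $\delta = \cos(\alpha_K)\|w\|$.

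Next I would fix an arbitrary $z \in (x+K)\cap(y-K)$ and write $z = x + u$ with $u := z - x \in K$. Since $z \leqslant_K y$, we also have $(y-x) - u = y - z \in K$, that is $u \leqslant_K y - x$. Pairing with $\phi$ and using $\langle \phi, \cdot\rangle \geq 0$ on $K$ yields $\langle \phi, u\rangle \leq \langle \phi, y - x\rangle =: M$, a constant independent of $z$. The uniform positivity bound then gives $\|u\| \leq \langle \phi, u\rangle/\delta \leq M/\delta$, whence $\|z\| \leq \|x\| + \|u\| \leq \|x\| + M/\delta$. As this estimate does not depend on $z$, the set $(x+K)\cap(y-K)$ lies in the ball of radius $\|x\| + M/\delta$ and is therefore bounded.

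The main obstacle is the first step, namely producing the uniformly positive functional $\phi$, since everything afterwards is a one-line estimate. This is precisely where pointedness of $K$ (together with finite dimensionality, which makes $C$ compact and $\operatorname{conv} C$ closed) is indispensable: without it the order interval may fail to be bounded, as one sees for a half-space, where $K \cap (-K) \neq \{0\}$.
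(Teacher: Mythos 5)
Your proof is correct, but it takes a genuinely different route from the paper. The paper argues by contradiction: it assumes an unbounded sequence $z_n = x + x_n = y + y_n$ with $x_n \in K$, $y_n \in -K$, normalizes by $\|x_n\|$, extracts a convergent subsequence on the compact set $S^1 \cap K$, and observes that the two normalized sequences have the same limit $x_0$, which must then lie in $K \cap (-K) = \{0\}$ while having norm $1$. You instead construct a linear functional $\phi$ uniformly positive on $K$ (via strict separation of $0$ from $\operatorname{conv}\bigl(K \cap S^{N-1}\bigr)$, whose non-membership of $0$ you correctly reduce to pointedness, or directly from the standing assumption (A.1)) and then bound the order interval by the one-line estimate $\|u\| \leq \langle \phi, u\rangle/\delta \leq \langle \phi, y-x\rangle/\delta$. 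Your approach buys an explicit, quantitative radius for the order interval and avoids subsequence extraction, at the cost of invoking the separating hyperplane theorem and Carath\'eodory-type compactness of the convex hull; the paper's argument is softer and self-contained but purely qualitative. Both proofs use only closedness and pointedness of $K$ — neither actually needs solidity, as you rightly point out — and both are valid.
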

\begin{proof}
  By contradiction let us assume that there exists $x_n \in K$, $y_n \in -K$ such that $z_n := x + x_n = y + y_n$ and $\|z_n\| \to +\infty$ as $n \to +\infty$.
  But then $\|x_n\|, \|y_n\| \to +\infty$.

  Notice that $y_n = (x - y) + x_n$. We know that $\frac{x_n}{\|x_n\|} \in S^1\cap K$, where $S^1$ denotes unit sphere in $\mathbb{R}^{N}$.
  We can assume (up to subsequence) that $\frac{x_n}{\|x_n\|} \to x_0 \in S^1\cap K$.
  Similarly, $\frac{y_n}{\|x_n\|} \in - K$. But then
  \begin{equation}
    \left\| \frac{x_n}{\|x_n\|} - \frac{y_n}{\|x_n\|} \right\| = \left\| \frac{x - y}{\|x_n\|} \right\| \to 0,
  \end{equation}
  so $\frac{y_n}{\|x_n\|} \to x_0$. This is a contradiction because $x_0 \in K \cap (-K) = \{0\}$.
\end{proof}

\begin{lemma}\label{lem:diammeter}
  % TODO: tu chyba trzba założyć coś o rozwartości stożka, bo jeśli jest wiecej niz 180 stopni, to przeciecie jest nieograniczone.
  If $K \subset \mathbb{R}^{N}$ is a solid cone and $x_0 \leqslant_K x_n$, then
  \begin{equation}
    x_n \searrow_K x_0 \Rightarrow \operatorname{diam} \big( (x_0 + K) \cap (x_n - K) \big) \to 0,
  \end{equation}
  where $\searrow_K$ means that the sequence converges $K$-monotonically descending.
\end{lemma}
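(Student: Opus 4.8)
The plan is to reduce the order interval $(x_0 + K)\cap(x_n - K)$ to a normalized one by translation and the homogeneity of $K$, and then to control its diameter by a single constant coming from a compactness argument. First I would translate by $-x_0$: writing $k_n := x_n - x_0$, the hypothesis $x_0 \leqslant_K x_n$ gives $k_n \in K$, and $x_n \searrow_K x_0$ gives $k_n \to 0$. A point $z$ lies in $(x_0+K)\cap(x_n-K)$ exactly when $w := z - x_0$ satisfies $w \in K$ and $k_n - w \in K$, so $(x_0+K)\cap(x_n-K) = x_0 + \big(K \cap (k_n - K)\big)$ and, translation being an isometry, $\operatorname{diam}\big((x_0+K)\cap(x_n-K)\big) = \operatorname{diam}\big(K \cap (k_n - K)\big)$. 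For $k_n \neq 0$ I would then use that $K$ is a cone, so $\lambda K = K$ for every $\lambda > 0$; taking $\lambda = 1/\|k_n\|$ yields $\tfrac{1}{\|k_n\|}\big(K \cap (k_n - K)\big) = K \cap (\hat k_n - K)$ with $\hat k_n := k_n/\|k_n\| \in S^1 \cap K$, whence $\operatorname{diam}\big(K \cap (k_n - K)\big) = \|k_n\|\,\operatorname{diam}\big(K \cap (\hat k_n - K)\big)$.

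The crux is to bound $\operatorname{diam}\big(K \cap (u - K)\big)$ uniformly over the unit directions $u \in S^1 \cap K$, i.e. to show $M := \sup_{u \in S^1 \cap K}\operatorname{diam}\big(K \cap (u - K)\big) < +\infty$. Here I would argue by contradiction exactly along the lines of Lemma~\ref{lem:boundedintersection}, but now uniformly: the set $S^1 \cap K$ is compact (a closed subset of the sphere, nonempty since $K \neq \{0\}$), and $0 \in K \cap (u-K)$ for every $u$, so if the supremum were infinite there would be $u_j \in S^1 \cap K$ and $z_j \in K \cap (u_j - K)$ with $\|z_j\| \to +\infty$. Passing to subsequences, $u_j \to u \in S^1\cap K$ and $z_j/\|z_j\| \to z_0 \in S^1 \cap K$. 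Since $u_j - z_j \in K$ and $K$ is a cone, $\frac{u_j - z_j}{\|z_j\|} = \frac{u_j}{\|z_j\|} - \frac{z_j}{\|z_j\|} \in K$, and because $\|u_j\| = 1$ this converges to $-z_0$; closedness of $K$ gives $-z_0 \in K$, so $z_0 \in K \cap (-K) = \{0\}$, contradicting $\|z_0\| = 1$. Hence $M < +\infty$.

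Combining the two steps, $\operatorname{diam}\big((x_0+K)\cap(x_n-K)\big) = \|k_n\|\,\operatorname{diam}\big(K \cap (\hat k_n - K)\big) \le M\,\|k_n\| \to 0$ (the terms with $k_n = 0$ being trivially zero), which is the claim. I expect the only real difficulty to be the uniform bound $M < +\infty$: Lemma~\ref{lem:boundedintersection} already gives boundedness of $K \cap (u - K)$ for each fixed $u$, but the diameter estimate needs this boundedness to be \emph{uniform} in the direction $u$, and it is the compactness of $S^1 \cap K$ together with the pointedness $K \cap (-K) = \{0\}$ that upgrades the pointwise statement to a uniform one. Everything else is the homogeneity bookkeeping of the first paragraph.
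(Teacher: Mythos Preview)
Your argument is correct, and it proceeds along a genuinely different route from the paper's. The paper argues by contradiction: it uses the $K$-monotone descent of $(x_n)$ to deduce that the order intervals $D_n := (x_0+K)\cap(x_n-K)$ are nested, hence $\operatorname{diam}(D_n)$ is nonincreasing; assuming the limit is $d>0$, it picks $z_n \in D_n$ with $\|z_n - x_0\| \ge d/2$, uses $D_n \subset D_1$ (bounded by Lemma~\ref{lem:boundedintersection}) to extract a subsequential limit $z_n - x_0 \to k_0 \in K\setminus\{0\}$, and observes that $z_n - x_n \to k_0$ as well while $z_n - x_n \in -K$, forcing $k_0 \in K\cap(-K)=\{0\}$, a contradiction. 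Your approach instead exploits the homogeneity of $K$ to reduce everything to the uniform bound $M := \sup_{u\in S^1\cap K}\operatorname{diam}\big(K\cap(u-K)\big) < \infty$, which yields the quantitative estimate $\operatorname{diam}(D_n)\le M\,\|x_n-x_0\|$. This is actually stronger: it never uses the monotone descent of $(x_n)$ (only $x_n\to x_0$ with $x_n\geqslant_K x_0$) and gives a Lipschitz-type control of the diameter in terms of $\|x_n-x_0\|$. The paper's version, by contrast, is a bit more self-contained in that it needs only the \emph{pointwise} boundedness from Lemma~\ref{lem:boundedintersection} rather than your uniform upgrade, but it pays for this by leaning on the nesting $D_{n+1}\subset D_n$ supplied by the monotone hypothesis.
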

\begin{proof}
  Let $D_n := (x_0 + K) \cap (x_n - K)$. From the previous lemma we know that diameter of $D_n$ is bounded, for each $n$.
  Moreover, since $x_n$ tends to $x_0$ from above, we know that $\operatorname{diam}(D_n)$ is nonincreasing sequence.
  If $\operatorname{diam}(D_n) \to d > 0$, then there exists $z_n \in D_n$ such that $\|z_n - x_0\| \geq \frac{d}{2}$. So
  \begin{equation}
    z_n = x_0 + k_n = x_n + l_n,
  \end{equation}
  with $\|k_n\| \geq \frac{d}{2}$. For $n$ large enough we can assume that $\|l_n\| \geq \frac{d}{3}$, since $l_n = (x_0 - x_n) + k_n$.
  % TODO - dlaczego k_n \to k_0 \in K ????
  We can assume that $k_n \to k_0 \in K$. Then $l_n \to k_0$, as $n \to +\infty$, but $l_n \in -K$. This leads to a contradiction.
\end{proof}

We will need the following lemma.

\begin{lemma}\label{lem:convergenve}
  Let us assume $K \subset \mathbb{R}^{N}$ is a pointed cone.
  Every sequence $(x_n)_{n \geq 0} \subset K$ which is $K$-nonincreasing (i.e., $x_{n+1} \leqslant_K x_n$) converges to some point $x \in K$.
\end{lemma}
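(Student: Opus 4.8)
The plan is to reduce everything to the \emph{normality} of the cone $K$: in finite dimensions a pointed closed convex cone lets the order control the norm from above, i.e.\ there is a constant $C>0$ such that $0 \leqslant_K a \leqslant_K b$ implies $\|a\| \leq C\|b\|$. This is the crucial structural fact, and it is where pointedness (via compactness of the unit sphere) really enters; without it the statement is false, since bounded order-monotone sequences in general ordered spaces need not converge.

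First I would establish normality by a homogeneity-and-compactness argument identical in spirit to the proof of Lemma~\ref{lem:boundedintersection}. Suppose no such $C$ exists; then for each $n$ there are $a_n, b_n$ with $a_n \in K$, $b_n - a_n \in K$ and $\|a_n\| > n\|b_n\|$. In particular $\|a_n\| > 0$, so rescaling by $1/\|a_n\|$ (which preserves membership in the convex cone $K$) I may assume $\|a_n\| = 1$, whence $\|b_n\| < 1/n \to 0$. Passing to a subsequence, $a_n \to a$ with $\|a\| = 1$ and $a \in K$ since $K$ is closed, while $b_n \to 0$; hence $b_n - a_n \to -a \in K$, forcing $a \in K \cap (-K) = \{0\}$, a contradiction.

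With normality in hand the remaining steps are short. From $x_{n+1} \leqslant_K x_n$ and $K + K \subseteq K$ I get $x_n \leqslant_K x_0$, so $0 \leqslant_K x_n \leqslant_K x_0$ and normality gives $\|x_n\| \leq C\|x_0\|$; the sequence is bounded and thus has a subsequence $x_{n_k} \to x$, with $x \in K$ by closedness. Taking the limit in $x_n - x_{n_k} \in K$ (valid whenever $n_k \geq n$) yields $x \leqslant_K x_n$ for every $n$, so the increments $d_n := x_n - x$ lie in $K$ and are themselves $K$-nonincreasing.

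Finally I would upgrade subsequential convergence to full convergence. For $n \geq m$ one has $d_m - d_n = x_m - x_n \in K$, that is $0 \leqslant_K d_n \leqslant_K d_m$, whence $\|d_n\| \leq C\|d_m\|$ by normality. Choosing $m = n_k$ and using $d_{n_k} \to 0$, for any $\varepsilon > 0$ I pick $k$ with $C\|d_{n_k}\| < \varepsilon$; then $\|d_n\| < \varepsilon$ for all $n \geq n_k$, so $x_n \to x \in K$. The only genuine obstacle is the normality estimate; once it is available, monotonicity propagates the subsequential limit to the entire sequence automatically.
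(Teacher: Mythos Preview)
Your proof is correct. Both arguments follow the same three-step outline --- bound the sequence, extract a convergent subsequence, and then upgrade to full convergence --- but the tools differ. The paper bounds the sequence by invoking Lemma~\ref{lem:boundedintersection} (order intervals are bounded) and then upgrades the subsequential limit via Lemma~\ref{lem:diammeter} (the diameter of $[x_0,x_n]_K$ shrinks to zero as $x_n \searrow_K x_0$). You instead isolate a single quantitative estimate --- the \emph{normality} constant $C$ with $0 \leqslant_K a \leqslant_K b \Rightarrow \|a\| \le C\|b\|$ --- and apply it twice: once to get boundedness, once (to the shifted increments $d_n = x_n - x$) to squeeze the whole sequence onto the subsequential limit. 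Your route is the standard one in the theory of ordered Banach spaces and is arguably cleaner, since one compactness argument yields a reusable inequality rather than two separate geometric lemmas. It also avoids a mild mismatch in the paper: Lemmas~\ref{lem:boundedintersection} and~\ref{lem:diammeter} are stated for \emph{solid} cones, whereas the present lemma only assumes pointedness; your normality argument needs only closedness and pointedness, so it matches the hypotheses exactly.
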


\begin{proof}
  %Since the set $\mathcal{S}_{f}^{x_1} := \{x \geqslant_K 0 \suchthat x \leqslant_K x_1\}$ is compact, the sequence $(x_n)$ has a subsequence convergent to some $x \in \mathcal{S}_{f}^{x_1}$.
  %Obviously, monotonicity of $(x_n)$ implies that $x \leqslant_K x_n$, for every $n \geq 1$.
  %
  %It is easy to check that for every $\varepsilon > 0$ there exists $\delta > 0$ such that 
  %\begin{equation}\label{ed}
  %  \forall x \leqslant_K x^{\prime} : \Big(\|x-x^{\prime}\| < \delta \Rightarrow \forall y \big(x \leqslant_K y \leqslant_K x^{\prime} \Rightarrow \|x-y\| < \varepsilon\big)\Big).
  %\end{equation}
  %
  %%[Jeśli będziemy ten lemat wykorzystywać, to można to formalnie udowodnić.]
  %  % TODO
  %
  %Take an arbitrary $\varepsilon > 0$. Then we can choose $\delta$ satisfying Condition~\eqref{ed} and $k_0 \geq 1$ such that $\|x_{n_k}-x\| < \delta$, for every $k \geq k_0$.
  %Hence, $\|x_n-x\| < \varepsilon$, for every $n \geq n_{k_0}$, which finishes the proof.

  Since set $\mathcal{S}_{f}^{x_1} := \{x \geqslant_K 0 \suchthat x \leqslant_K x_1\}$ is compact (because it is closed and bounded), the sequence $(x_n)_{n \geq 1}$ has a subsequence convergent to some $x_{n_k} \to x \in \mathcal{S}_{f}^{x_1}$.
  From Lemma~\ref{lem:diammeter} proved above we know, that for $\varepsilon > 0$ there exists $\delta > 0$ such that
  \begin{equation}
    \operatorname{diam} \big( (x_0 + K) \cap (x_n - K) \big) < \varepsilon,
  \end{equation}
  if only $\|x_{n} - x_0\| < \delta$.
  There exists $k_0$ such that $\|x_{n_{k_0}} - x\| < \delta$. Thus for all $n \geq n_{k_0}$ we have
  \begin{equation}
    x_n \in \big( (x_0 + K) \cap (x_{n_{k_0}} - K) \big),
  \end{equation}
  so $\|x_{n} - x\| < \varepsilon$.
\end{proof}

\begin{lemma}\label{lemma:feasbalecontraction}
  Assume that $f \colon K \to \mathbb{R}^{N}$ be $K$-monotone and $p$ is feasible for $f$.
  Then $f^{n}(p) := (f \circ \cdots \circ f)(p)$ ($n$ times composition of $f$) is $K$-nonincreasing sequence converging to fixed point of $f$.
  If additionally $f$ is $K$-scalable, then this point is unique.
\end{lemma}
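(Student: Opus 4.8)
The plan is to verify the three assertions in turn: that the orbit $\big(f^{n}(p)\big)_{n \geq 0}$ is $K$-nonincreasing, that it converges to a fixed point, and finally that scalability forces this fixed point to be unique. Write $x_{n} := f^{n}(p)$. First I would record that the iteration never leaves $K$: feasibility of $p$ gives $p \geqslant_K 0$, i.e. $p \in K$, and since $f$ is $K$-monotone it leaves $K$ invariant (the Remark following the definition of monotonicity, $f(K) \subset K$). Hence $x_{n} \in K$ for every $n$, and $f$ may legitimately be applied at each step.

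For the monotonicity of the orbit I would argue by induction. The base case $x_{1} = f(p) \leqslant_K p = x_{0}$ is precisely the feasibility of $p$. For the inductive step, assume $x_{n+1} \leqslant_K x_{n}$. If $x_{n+1} = x_{n}$ then $x_{n+1}$ is already a fixed point and the sequence is constant from that index on, so the claim is trivial; otherwise $x_{n+1} <_K x_{n}$, and $K$-monotonicity gives $x_{n+2} = f(x_{n+1}) \leqslant_K f(x_{n}) = x_{n+1}$. Thus $(x_{n})$ is $K$-nonincreasing for all $n$.

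Convergence is then immediate from the machinery already developed. The sequence $(x_{n})_{n \geq 0} \subset K$ is $K$-nonincreasing and $K$ is a pointed cone, so Lemma~\ref{lem:convergenve} produces a limit $x \in K$. Passing to the limit in $x_{n+1} = f(x_{n})$ and invoking continuity of $f$ (Condition~\ref{lab:f1}) yields $x = \lim_{n} f(x_{n}) = f(x)$, so $x$ is a fixed point of $f$.

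For uniqueness under the extra hypothesis of $K$-scalability, I would mimic the smallest-scaling argument of Theorem~\ref{thm:uniquenessthm}. Given two distinct fixed points $p, p'$, take the least $\alpha > 1$ with $p' \leqslant_K \alpha p$, so that $\alpha p \in \partial(p' + K)$. Scalability evaluated at the fixed point $p$ gives $f(\alpha p) \leqslant_K \alpha f(p) = \alpha p$, while monotonicity gives $p' = f(p') \leqslant_K f(\alpha p)$; chaining these is meant to contradict the minimality of $\alpha$. I expect the main obstacle to be exactly here: weak monotonicity combined with weak scalability only recovers $p' \leqslant_K \alpha p$ and produces no strict improvement (indeed the identity map on $\mathbb{R}^{N}_{+}$ is $K$-monotone and weakly $K$-scalable yet has a continuum of fixed points). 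Closing the argument therefore requires genuinely exploiting the strictness available in the scalability or sup-monotonicity hypothesis, as in Theorem~\ref{thm:uniquenessthm}, where strong sup-monotonicity upgrades the inequality to $p' \ll_K \alpha p$ and thereby contradicts $\alpha p \in \partial(p' + K)$. Pinning down the precise strengthening of scalability that makes this strict step valid is the delicate point of the proof.
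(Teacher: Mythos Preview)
Your treatment of the first two assertions matches the paper's almost verbatim: the induction $p_{1}\leqslant_K p_{0}$, $p_{n+1}=f(p_n)\leqslant_K f(p_{n-1})=p_n$, followed by an appeal to Lemma~\ref{lem:convergenve} for convergence. The only difference is in identifying the limit as a fixed point. You pass to the limit in $x_{n+1}=f(x_n)$ using continuity directly. The paper instead sandwiches: from $p^{*}\leqslant_K p_n$ and monotonicity it gets $f(p^{*})\leqslant_K f(p_n)=p_{n+1}\to p^{*}$, hence $f(p^{*})\leqslant_K p^{*}$; and from $p^{*}\leqslant_K p_{n+1}=f(p_n)$ together with continuity it gets $p^{*}\leqslant_K f(p^{*})$. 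Both routes are valid; yours is shorter, the paper's makes the role of the order more visible (and shows that one of the two inequalities does not even need continuity).

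On uniqueness you have done more than the paper, not less. The paper's proof simply says that the last part ``follows from the results in the section on the uniqueness of the fixed point'', i.e.\ it defers to Theorem~\ref{thm:uniquenessthm}. But that theorem assumes \emph{strong} $K$-sup-monotonicity, which is not implied by plain $K$-monotonicity. Your observation that the identity on $\mathbb{R}^{N}_{+}$ is $K$-monotone and (weakly) $K$-scalable yet has a continuum of fixed points is correct, and it shows that the uniqueness clause of the lemma is false as literally stated: some strict form of scalability (or of sup-monotonicity) is needed. So the ``delicate point'' you flag is not a gap in your argument but a genuine imprecision in the lemma's hypotheses; the paper's deferral to Theorem~\ref{thm:uniquenessthm} quietly imports the missing strictness without saying so.
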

\begin{proof}
  Denote $p_0 := p$ and $p_n := f^{n}(p)$. We have $p_1 \leqslant_K p_0$. By induction, if $p_n \leqslant_K p_{n-1}$, then $p_{n+1} = f(p_n) \leqslant_K f(p_{n-1}) = p_n$.

  From Lemma~\ref{lem:convergenve} we have $p_n \searrow_K p^{*}$. But $f(p^{*}) \leqslant_K f(p_n)$, so $f(p^{*}) \leqslant_K p^{*}$.
  Moreover, $p^{*} \leqslant_K p^{n+1} = f(p_n)$, so $p^{*} \leqslant_K f(p^{*})$. Therefore, $p^{*} = f(p^{*})$.

  The last part of the theorem follows from the results in the section on the uniqueness of the fixed point.
\end{proof}

We now proceed to show that the main results in this section. To this end, we need the following definition
\begin{definition}
  A map $f \colon K \to K$ is said to be a {\em $K$-contractive interference function}, if it satisfies the following conditions:
  \begin{LaTeXdescription}
      %\item[(I.0)]\label{lab:I1} Positivity: $f(x) > 0$, for every $x \geqslant_K 0$, %(ogólniej można by rozważać $0 \leqslant_K f(x)$ dla $0 \leqslant_K x$, ale nam chyba zależy na nieujemnych wartościach),
    \item[(I.1)]\label{lab:I2} $K$-monotonicity: $x \leqslant_K y$ implies $f(x) \leqslant_K f(y)$,
    \item[(I.2)]\label{lab:I3} $K$-contractivity: There exist a constant $c \in [0, 1)$ and a vector ${w} \gg_K 0$ such that 
      \begin{equation}
	%TODO: czy założenie z kartki papieru, ze dla kazdego "epislona > 0" jest konieczne???
	f(x + \varepsilon{w}) \leqslant_K f(x) + c\varepsilon{w},
      \end{equation}
      for every $x \in K$.
      % Compare to {\em additively subhomogeneous} mappings from \cite[Section 1.5]{Lemmens2012}.
  \end{LaTeXdescription}
\end{definition}

\begin{remark}
  $K := \mathbb{R}^{N}_{+}$.
  Note that a $K$-contractivity does not imply scalability (see \cite{feyzmahdavian2012contractive}). For this consider
  \begin{equation}
    f(p) :=
    \begin{cases}
      p^2 + \frac{1}{100}, & 0 \leq p \leq \frac{1}{4}, \\
      \frac{1}{2}p - \frac{1}{16} + \frac{1}{100}, & p > \frac{1}{4}. \\
    \end{cases}
  \end{equation}
  For $\alpha := 2$ and $x := \frac{1}{8}$, we have $f(\alpha \cdot x) \not <_K \alpha f(x)$.

  Similarly, scalability does not imply contractivity. For this consider $f(p) = 2p$.
\end{remark}

\begin{lemma}\label{lm0}
  There exists $\delta(K) > 0$ such that for every $0 \ll_K w$ and $v \in \mathbb{R}^{N}$, if $v, -v \leqslant_K w$, then $\|v\|_w \leq \delta(K)$.
\end{lemma}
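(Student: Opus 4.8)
The plan is to exploit two features of the statement: its invariance under the common scaling $(v,w)\mapsto(tv,tw)$, $t>0$, and the standing angle bound (A.1), which asserts $\alpha_K<\pi/2$. First I would record the homogeneity: both hypotheses $v,-v\leqslant_K w$ and the value $\|v\|_w$ are unchanged when $v$ and $w$ are scaled by a common positive factor, so there is no loss in normalizing $w$, say $\|w\|=1$. This reduces the claim to a single uniform bound on $v$ as $w$ ranges over the (relatively open) normalized slice of $\operatorname{int}K$, with $v$ running over the order interval $\{v : -w\leqslant_K v\leqslant_K w\}$.

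Next I would extract the geometric content of (A.1). Write $w_0$ for the axis of the enveloping ice-cream cone, normalized so that $\|w_0\|=1$, and set $c:=\cos\alpha_K$; positivity of $c$ is exactly where $\alpha_K<\pi/2$ enters. Then $K\subset C(w_0,c)$ means $\langle z,w_0\rangle\ge c\,\|z\|$ for every $z\in K$. Applying this to the two cone elements $w-v$ and $w+v$ supplied by the hypothesis gives, first, $\langle w-v,w_0\rangle\ge 0$ and $\langle w+v,w_0\rangle\ge 0$, whence $|\langle v,w_0\rangle|\le\langle w,w_0\rangle\le\|w\|$; and second, $\|w+v\|\le c^{-1}\langle w+v,w_0\rangle=c^{-1}\big(\langle w,w_0\rangle+\langle v,w_0\rangle\big)\le 2c^{-1}\|w\|$. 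A triangle inequality then yields $\|v\|\le\|w+v\|+\|w\|\le(1+2c^{-1})\|w\|$. Thus the order interval is contained in a Euclidean ball of radius $(1+2/\cos\alpha_K)\|w\|$, uniformly in $w$. Here Lemma~\ref{lem:boundedintersection}, applied with $x:=-w\leqslant_K w=:y$ (valid since $2w\in K$), is the purely qualitative form of this boundedness, and (A.1) is precisely what upgrades it to a quantitative, $w$-free estimate.

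Finally I would convert this fixed-norm estimate into the asserted bound on $\|v\|_w$ and set $\delta(K)$ accordingly, absorbing the constants relating $\|\cdot\|$, $\|\cdot\|_\infty$ and the weighted norm into the single constant $\delta(K):=1+2/\cos\alpha_K$ (up to an equivalence factor). I would keep $c=\cos\alpha_K$ explicit throughout, so that the role of the strict inequality is transparent.

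The main obstacle I anticipate is exactly this last conversion: passing from a bound in a fixed norm to a bound in the weight-dependent norm $\|\cdot\|_w$ uniformly in $w$, since the latter can be sensitive to $w$ approaching a coordinate hyperplane. The strict bound $\alpha_K<\pi/2$ is indispensable: if $\alpha_K=\pi/2$ then $c=0$, the cone degenerates toward a half-space, the set $(w-K)\cap(-w+K)$ ceases to be uniformly bounded after normalization, and no finite $\delta(K)$ can exist. As a fallback for the norm-conversion I would argue by compactness, noting that on the closed and bounded (hence compact) set $\{(v,w)\mid\|w\|=1,\ w\in K,\ w\pm v\in K\}$ the relevant ratio is continuous and therefore attains a finite maximum, which one takes as $\delta(K)$; the estimate above guarantees both the boundedness needed for compactness and the nondegeneracy that keeps the maximum finite.
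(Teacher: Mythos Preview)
Your route is considerably more elaborate than the paper's. The paper's argument is a single sentence: it notes that $v,-v\in(w-K)\cap(-w+K)$, invokes Lemma~\ref{lem:boundedintersection} (with $x=-w$, $y=w$) to say this intersection is bounded, and then simply asserts $\|v\|_w\le\delta(K)$ with a reference to a figure. There is no homogeneity reduction, no quantitative use of (A.1), and no discussion of uniformity in $w$. Your Euclidean estimate $\|v\|\le(1+2/\cos\alpha_K)\|w\|$ is correct---indeed it can be sharpened to $\|v\|\le\|w\|/\cos\alpha_K$ by adding the two inequalities $\langle w\pm v,w_0\rangle\ge c\,\|w\pm v\|$ and using $\|w-v\|+\|w+v\|\ge 2\|v\|$---and it genuinely upgrades the paper's qualitative boundedness to a $w$-uniform Euclidean bound.

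The gap is exactly where you suspect, and your compactness fallback does not close it. On the set $\{(v,w):\|w\|=1,\ w\in K,\ w\pm v\in K\}$ the function $(v,w)\mapsto\|v\|_w=\max_i|v_i|/w_i$ is neither defined nor continuous where some $w_i=0$, and nothing in the standing hypotheses keeps $\operatorname{int}K$ away from the coordinate hyperplanes. Concretely, take $N=2$ and $K=C\big((1,1)/\sqrt2,\cos\alpha\big)$ with $\pi/4<\alpha<\pi/2$; then $\mathbb{R}^2_+\subset K$ and (A.1) holds, $w=(1,\varepsilon)\in\operatorname{int}K$ for every small $\varepsilon>0$, and one can choose $v=(0,c)$ with $c>0$ bounded away from~$0$ (depending only on $\alpha$) so that $w\pm v\in K$, whence $\|v\|_w=c/\varepsilon\to\infty$. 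So without an additional hypothesis such as $K\subset\mathbb{R}^N_+$ (which would force $\operatorname{int}K\subset\operatorname{int}\mathbb{R}^N_+$ and make your compactness argument go through), no $w$-independent $\delta(K)$ exists; neither your argument nor the paper's one-liner produces one. If instead $\delta$ is meant to depend on the fixed vector $w$ from the contractivity definition---which is the only way the constant is used downstream---then the paper's short appeal to Lemma~\ref{lem:boundedintersection} already suffices and your (A.1)-based machinery is unnecessary.
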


\begin{proof}
  From Lemma~\ref{lem:boundedintersection} we know that $v, -v \in (w - K) \cap (-w + K)$, which is bounded. Therefore, $\|v\|_w \leq \delta(K)$ (comp.\ Figure~\ref{fig.FF}).

  \begin{figure}
    \begin{center}
      \includegraphics[width=0.9\columnwidth]{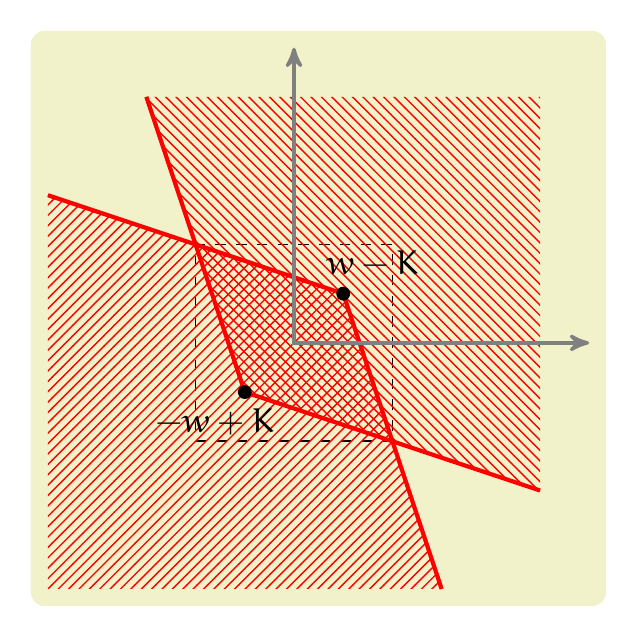}
      \caption{Intuitive representation of set $(w - K) \cap (-w + K)$.}
      \label{fig.FF}
    \end{center}
  \end{figure}

\end{proof}

%Let us define $\delta_\beta := \mbox{ctg}(\arccos\beta)$. Notice that $\delta_\beta\in (0, 1]$, for $\beta\in (0, \sqrt{2}/2]$.
%In what follows we will assume that $\beta\in (0, \sqrt{2}/2]$. 
\begin{lemma}\label{lm1}
  For every $\varepsilon > 0$, if $v, -v \leqslant_K \varepsilon{w}$, then $\|v\|_{w} \leq {\varepsilon}{\delta(K)}$.
\end{lemma}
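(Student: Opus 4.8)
The plan is to reduce Lemma~\ref{lm1} to Lemma~\ref{lm0} by a simple scaling argument, exploiting the cone structure and the homogeneity of the weighted maximum norm. First I would use that $K$ is a cone: the hypothesis $v \leqslant_K \varepsilon w$ means $\varepsilon w - v \in K$, and since $\mathbb{R}_{+} K \subseteq K$, multiplying by the nonnegative scalar $1/\varepsilon$ gives $w - v/\varepsilon \in K$, i.e. $v/\varepsilon \leqslant_K w$. Applying the same reasoning to $-v$ yields $-v/\varepsilon \leqslant_K w$. Hence, setting $u := v/\varepsilon$, both $u$ and $-u$ satisfy $u, -u \leqslant_K w$, which is exactly the hypothesis of Lemma~\ref{lm0} for the same strictly positive vector $w$ (note $w$ is unchanged, so $0 \ll_K w$ still holds).

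Next I would invoke Lemma~\ref{lm0} directly on $u$ to obtain $\|u\|_{w} \leq \delta(K)$, with the very same constant $\delta(K)$ and the same weight $w$ appearing in the norm. The final step is absolute homogeneity of the weighted maximum norm: from $\|x\|_{w} = \max_{i} |x_i|/|w_i|$ one reads off $\|u\|_{w} = \|v/\varepsilon\|_{w} = \frac{1}{\varepsilon}\|v\|_{w}$. Combining this with the bound just obtained yields $\frac{1}{\varepsilon}\|v\|_{w} \leq \delta(K)$, and therefore $\|v\|_{w} \leq \varepsilon\,\delta(K)$, as claimed.

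There is no genuine obstacle here; the statement is essentially a homogeneity corollary of Lemma~\ref{lm0}. The only two points I would verify with care are that the order $\leqslant_K$ is preserved under division by the positive scalar $1/\varepsilon$ (immediate from $\mathbb{R}_{+} K \subseteq K$), and that the weight vector inside the norm remains $w$ rather than drifting to $\varepsilon w$---and it does precisely because I rescale the argument $v$ rather than the weight $w$. Both checks are routine, so the proof is short.
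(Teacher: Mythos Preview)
Your proof is correct and follows essentially the same scaling argument as the paper: rescale $v$ by $1/\varepsilon$ to land in the situation of Lemma~\ref{lm0}, then use homogeneity of $\|\cdot\|_w$. The paper phrases the first step as $v,-v\in(\varepsilon w-K)\cap(-\varepsilon w+K)$ and divides by $\varepsilon$, which is just your order-relation argument written in set form.
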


\begin{proof}
  Since $v, -v \leqslant_K \varepsilon{w}$, then
  \begin{equation}
    v, -v \in (\varepsilon{w} - K) \cap (-\varepsilon{w} + K).
  \end{equation}
  Therefore,
  \begin{equation}
    \frac{1}{\varepsilon}v, \frac{-1}{\varepsilon}v \in ({w} - K) \cap (-{w} + K).
  \end{equation}
  So $\frac{1}{\varepsilon}\|v\|_{w} \leq {\delta(K)}$.
\end{proof}

\begin{lemma}\label{lm2}
  Let $\mathbb{R}^{N}_{+} \subset K$. For every ${w} \gg_K 0$ and every $v \in \mathbb{R}^{N}$ one has $v \leqslant_K \|v\|_{w}{w}$.
\end{lemma}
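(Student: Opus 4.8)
The plan is to reduce the cone inequality $v \leqslant_K \|v\|_{w}\,w$ to an inequality in the standard coordinatewise order and then invoke the hypothesis $\mathbb{R}^{N}_{+} \subset K$. By definition, $v \leqslant_K \|v\|_{w}\,w$ means $\|v\|_{w}\,w - v \in K$. Since $\mathbb{R}^{N}_{+} \subset K$, it is enough to prove the stronger statement $\|v\|_{w}\,w - v \in \mathbb{R}^{N}_{+}$, i.e.\ that every coordinate of $\|v\|_{w}\,w - v$ is nonnegative in the usual sense.

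Next I would unwind the definition of the weighted maximum norm. Recall $\|v\|_{w} = \max_{j} |v_{j}|/|w_{j}|$, which presupposes $w_{i} > 0$ for all $i$ (this strict positivity of the coordinates of $w$ is exactly what makes $\|\cdot\|_{w}$ well defined, and is consistent with $w \gg_K 0$). Fix an index $i$. From the definition of the maximum, $\|v\|_{w} \geq |v_{i}|/w_{i}$, and multiplying by $w_{i} > 0$ gives $\|v\|_{w}\, w_{i} \geq |v_{i}| \geq v_{i}$. Hence $(\|v\|_{w}\,w - v)_{i} \geq 0$. As $i$ was arbitrary, $\|v\|_{w}\,w - v \in \mathbb{R}^{N}_{+}$, and the reduction of the first paragraph finishes the proof.

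The argument carries no genuine obstacle: it is essentially a restatement of the defining property of $\|\cdot\|_{w}$ together with the monotonicity of set inclusion applied to $\mathbb{R}^{N}_{+} \subset K$. The only point deserving care is the implicit requirement that $w$ have strictly positive coordinates, so that division by $w_{i}$ is legitimate; this is guaranteed by the standing convention that $\|\cdot\|_{w}$ is defined only for such $w$, matching the role of $w \gg_K 0$. Note also that the estimate uses $|v_{i}| \geq v_{i}$ rather than an equality, so no sign assumption on the entries of $v$ is needed, which is what lets the statement hold for \emph{every} $v \in \mathbb{R}^{N}$.
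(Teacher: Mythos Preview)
Your argument is correct and is precisely the analytic content behind the paper's own proof, which consists only of a reference to Figure~\ref{fig.FG}; you have simply written out the coordinatewise inequality $\|v\|_{w}\,w_{i}\geq |v_{i}|\geq v_{i}$ that the picture is meant to convey, and then used $\mathbb{R}^{N}_{+}\subset K$ to pass to the $K$-order. Your remark about needing $w_{i}>0$ (so that $\|\cdot\|_{w}$ is defined) is also appropriate, since in the paper $w\gg_{K}0$ alone does not force positive coordinates once $K\supsetneq\mathbb{R}^{N}_{+}$; the standing convention on $\|\cdot\|_{w}$ is indeed what makes the statement well posed.
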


\begin{proof}
%  PICTURE. $\|v\|_{w}{w} - K \supset \{x \suchthat \|x\|_{w} \leq \|v\|_{w}\}$.
  The proof of this lemma can be decuded from Figure~\ref{fig.FG}.
\end{proof}

As a consequence, we obtain
\begin{lemma}
  If $f$ is $K$-contractive interference function and $c$ from its definition satisfies $c \delta(K) < 1$, then $f$ is $c\delta(K)$-contractive with respect to $\|\cdot\|_w$. %with $c$ such that $c/\delta_\beta < 1$, then $f$ is a $(c/\delta_\beta)$-contraction with respect to the norm $\|\cdot\|_{w}$.
\end{lemma}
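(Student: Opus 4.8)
The plan is to show directly that $f$ is Lipschitz with constant $c\delta(K) < 1$ in the weighted norm, i.e.\ that $\|f(x) - f(y)\|_w \leq c\delta(K)\,\|x - y\|_w$ for all $x, y \in K$; since $c\delta(K) < 1$ by hypothesis, this exhibits $f$ as a genuine contraction with respect to $\|\cdot\|_w$. I would fix $x, y \in K$ and set $\varepsilon := \|x - y\|_w$, so that the goal reduces to estimating $\|f(x) - f(y)\|_w$ from above by $c\varepsilon\,\delta(K)$.

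First I would convert the weighted-norm distance between $x$ and $y$ into a two-sided order sandwich. Applying Lemma~\ref{lm2} to $v := x - y$ and to $v := y - x$ (both of $w$-norm $\varepsilon$) gives $x \leqslant_K y + \varepsilon w$ and $y \leqslant_K x + \varepsilon w$. Here one notes that $y + \varepsilon w \in K$, since $K$ is a convex cone and $w \gg_K 0$, so $f$ is defined at these points and $K$-monotonicity may be invoked.

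Next I would chain $K$-monotonicity (I.1) with $K$-contractivity (I.2). From $x \leqslant_K y + \varepsilon w$, monotonicity yields $f(x) \leqslant_K f(y + \varepsilon w)$, and (I.2) with base point $y$ yields $f(y + \varepsilon w) \leqslant_K f(y) + c\varepsilon w$; composing these gives $f(x) - f(y) \leqslant_K c\varepsilon w$. Exchanging the roles of $x$ and $y$ gives $f(y) - f(x) \leqslant_K c\varepsilon w$, so setting $v := f(x) - f(y)$ we obtain both $v \leqslant_K c\varepsilon w$ and $-v \leqslant_K c\varepsilon w$.

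Finally, Lemma~\ref{lm1}, applied with $c\varepsilon$ in place of $\varepsilon$, converts this symmetric order bound into the norm estimate $\|f(x) - f(y)\|_w \leq c\varepsilon\,\delta(K) = c\delta(K)\,\|x - y\|_w$, which is exactly the claim. The only genuine subtlety — and the part to state carefully — is the passage back and forth between the order $\leqslant_K$ and the weighted norm: Lemma~\ref{lm2} (which requires $\mathbb{R}^N_+ \subset K$) carries the norm information into the order at the outset, and Lemma~\ref{lm1} carries it back from the order to the norm at the end. Everything between these two translations is a routine composition of the two defining properties of a $K$-contractive interference function, so I expect no real obstacle beyond keeping the domain membership $y + \varepsilon w \in K$ and the factor $c\varepsilon$ in Lemma~\ref{lm1} straight.
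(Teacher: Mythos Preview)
Your argument is correct and follows essentially the same route as the paper: use Lemma~\ref{lm2} to turn $\|x-y\|_w=\varepsilon$ into the order bounds $x\leqslant_K y+\varepsilon w$ and $y\leqslant_K x+\varepsilon w$, apply (I.1) then (I.2) to obtain $\pm\bigl(f(x)-f(y)\bigr)\leqslant_K c\varepsilon w$, and finish with Lemma~\ref{lm1}. Your observation that Lemma~\ref{lm2} tacitly requires $\mathbb{R}^N_+\subset K$ (a hypothesis not restated in the present lemma) is a useful point of care.
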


\begin{proof}
  Take any $x \neq x^{\prime}$, $x, x^{\prime} \in K$. Then, by Lemma~\ref{lm2}, we have
  \begin{equation}
    x = x^{\prime} + (x - x^{\prime}) \leqslant_K x^{\prime} + \|x - x^{\prime}\|_{w}{w}.
  \end{equation}
  Assumptions \ref{lab:I2} and \ref{lab:I3} imply that
  \begin{equation}
    f(x) \leqslant_K f(x^{\prime} + \|x - x^{\prime}\|_{w}{w}) \leqslant_K f(x^{\prime}) + c\|x - x^{\prime}\|_{w}{w}.
  \end{equation}
  Analogously we obtain $f(x^{\prime}) \leqslant_K f(x) + c\|x - x^{\prime}\|_{w}{w}$.
  From Lemma~\ref{lm1} it follows that $\|f(x) - f(x^{\prime})\|_{w} \leq c \delta(K) \|x - x^{\prime}\|_{w}$, which finishes the proof.
\end{proof}

At that stage, we are able to derive the main result of this section, which we summarize as follows:
\begin{theorem}
  Let $\mathbb{R}^{N}_{+} \subset K$. 
  If $f$ is $K$-contractive interference function, $c \delta(K) < 1$ and $0 <_K f(0)$.
  Then $f$ has a unique fixed point $x^{*} \geqslant_K 0$, and for every initial vector $x^{0}$ the sequence $x^{k+1} := f\big(x^{k}\big)$ converges to $x^{*}$.
  Moreover, $\|x^{k} - x^{*}\|_{w} \leq \big(c\delta(K)\big)^k \|x^{0} - x^{*}\|_{w}$.
\end{theorem}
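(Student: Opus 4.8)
The plan is to observe that the immediately preceding lemma has already carried out all the analytic work: it shows that $f$ is $q$-contractive with respect to the weighted norm $\|\cdot\|_{w}$, where $q := c\delta(K)$ satisfies $q < 1$. Hence the statement reduces to the Banach contraction principle, supplemented by a short argument that places the fixed point inside $K$ and that reads off the geometric error estimate.

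First I would set up the ambient complete metric space. The vector $w$ serves as a weight throughout Lemmas~\ref{lm1} and~\ref{lm2}, so $\|\cdot\|_{w}$ is a genuine norm on $\mathbb{R}^{N}$, equivalent to the Euclidean norm; consequently $K$, being a closed subset of the finite-dimensional space $(\mathbb{R}^{N}, \|\cdot\|_{w})$, is complete. By the definition of a $K$-contractive interference function, $f$ is a self-map $f \colon K \to K$, so for any starting point $x^{0} \in K$ the iteration $x^{k+1} := f(x^{k})$ is well defined and stays in $K$. The preceding lemma then supplies the contraction inequality $\|f(x) - f(x^{\prime})\|_{w} \leq q \|x - x^{\prime}\|_{w}$ for all $x, x^{\prime} \in K$.

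Next I would apply the contraction mapping theorem on $(K, \|\cdot\|_{w})$, which yields a unique fixed point $x^{*} \in K$ together with convergence $x^{k} \to x^{*}$ for every $x^{0} \in K$. Since membership $x^{*} \in K$ is exactly the relation $0 \leqslant_K x^{*}$, we obtain $x^{*} \geqslant_K 0$; the extra hypothesis $0 <_K f(0)$ moreover excludes $x^{*} = 0$, since $x^{*} = 0$ would force $f(0) = f(x^{*}) = x^{*} = 0$, contradicting $f(0) >_K 0$. The error bound is then obtained by iterating the contraction along the orbit: from $x^{k} = f(x^{k-1})$ and $x^{*} = f(x^{*})$ one gets $\|x^{k} - x^{*}\|_{w} \leq q \|x^{k-1} - x^{*}\|_{w}$, and a one-line induction gives $\|x^{k} - x^{*}\|_{w} \leq q^{k}\|x^{0} - x^{*}\|_{w} = \big(c\delta(K)\big)^{k}\|x^{0} - x^{*}\|_{w}$.

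The bulk of the difficulty having already been discharged in the contraction lemma, the only points demanding care are bookkeeping ones: confirming that $\|\cdot\|_{w}$ is admissible as a norm and equivalent to the Euclidean norm, so that $(K, \|\cdot\|_{w})$ is complete, and reconciling the phrase \emph{every initial vector $x^{0}$} with the fact that $f$ is only guaranteed to be a self-map of $K$ — so that, strictly speaking, one should read the iteration as starting from $x^{0} \in K$ (or first replace $x^{0}$ by $f(x^{0})$ when $f$ is defined on all of $\mathbb{R}^{N}$ and maps into $K$). These are routine once $\mathbb{R}^{N}_{+} \subset K$ is in force, so I expect no essential obstacle beyond the invocation of the contraction principle itself.
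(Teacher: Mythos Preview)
Your proposal is correct and matches the paper's own treatment: the paper gives no explicit proof of this theorem, presenting it as an immediate consequence of the preceding lemma (which establishes that $f$ is $c\delta(K)$-contractive in $\|\cdot\|_{w}$) together with the Banach contraction principle. Your additional bookkeeping remarks---completeness of $(K,\|\cdot\|_{w})$, the role of $0 <_K f(0)$ in excluding $x^{*}=0$, and the caveat about the domain of the initial iterate---go slightly beyond what the paper spells out but are consistent with its intent (cf.\ the remark following the theorem).
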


%\begin{proof}
%  %TODO
%\end{proof}

We close this section with the following important remarks.

\begin{remark}
   From a practical perspective, the above result reveal that simple iteration algorithm converge geometrically fast.
\end{remark}

\begin{remark}
  In order to obtain $x^{*} > 0$ we need assumption $0 <_K f(0)$.
\end{remark}

\section{Existence of fixed points via topological degree method}

In this section we consider the existence of fixed points of $K$-monotone neural networks.

%Let us also see that the assumption $K$-sup-monotone is sufficient for the proof of the Persson type theorem.
%If we assume that there is $x^{\prime} \gg_K 0$ in the set $\mathcal{S}_{f}$, then $\sup \{x^{\prime}, f(x^{\prime})\} = x^{\prime}$, hence at the edge of $D$ with vertex $x^{\prime}$ we have $x \leqslant_K x^{\prime}$, which gives $f(x) \leqslant_K x^{\prime}$.
%This means that $D$ is invariant.
%
%Thus, the following statement is true:
%\begin{theorem}\label{th3}
%  Suppose there exists $x^{\prime} \gg_K 0$ in $\mathcal{S}_{f}$ and let be $K$-sup-monotone. Then there exists a non-negative and non-zero fixed point.
%\end{theorem}

\begin{theorem}\label{theorem:degreerzero}
  Assume that $\mathbb{R}^{N}_{+} \subset K$, $f$ is $K$-sup-monotone and strongly feasible. %\ref{lab:S1} and \ref{lab:S2prime}.
  Then there exits a fixed point $x^{*} = f(x^{*})$ such that $0 <_K x^{*}$.
\end{theorem}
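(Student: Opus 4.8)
The plan is to locate a fixed point of $f$ as a zero of the displacement map $\Phi:=\operatorname{id}-f$ via the topological degree on a suitable order interval. Since $\mathbb{R}^{N}_{+}\subset K$ and $\alpha_K<\pi/2$, the cone $K$ is solid, so I fix a strongly feasible point $\bar{x}$, i.e.\ $\bar{x}\gg_K 0$ with $f(\bar{x})\leqslant_K \bar{x}$, and work on the order interval $C:=K\cap(\bar{x}-K)$, which is convex, closed and—by Lemma~\ref{lem:boundedintersection}—bounded, hence compact. Its order interior $\Omega:=\operatorname{int}K\cap(\bar{x}-\operatorname{int}K)$ is an open bounded convex set containing $p:=\tfrac12\bar{x}$ (indeed $p\gg_K 0$ and $\bar{x}-p=p\gg_K 0$). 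I would run the homotopy
\begin{equation*}
  H(x,t):=x-t\,f(x)-(1-t)p,\qquad t\in[0,1],
\end{equation*}
whose endpoints are $H(\cdot,1)=\Phi$ and $H(\cdot,0)=\operatorname{id}-p$, the latter having its only zero at $p\in\Omega$ and hence degree $1$ by the Units property. By the Existence and Homotopy Invariance properties of the degree, it then suffices to show that $H(\cdot,t)$ has no zero on $\partial\Omega$ for any $t$.

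First I would pin down the image of $C$. Because $f(\bar{x})\leqslant_K\bar{x}$, the point $\bar{x}$ is the least upper bound of $\{\bar{x},f(\bar{x})\}$, so $\sup\{\bar{x},f(\bar{x})\}=\bar{x}$; applying $K$-sup-monotonicity to any $x<_K\bar{x}$ gives $f(x)\leqslant_K\bar{x}$, and this also holds at $x=\bar{x}$, whence $f(C)\subseteq\bar{x}-K$. On the upper part of $\partial\Omega$, where $\bar{x}-x\in\partial K$, a zero $x=t f(x)+(1-t)p$ rearranges to $\bar{x}-x=t\bigl(\bar{x}-f(x)\bigr)+(1-t)(\bar{x}-p)$ with $\bar{x}-f(x)\in K$ and $\bar{x}-p=p\in\operatorname{int}K$; using $\operatorname{int}K+K\subseteq\operatorname{int}K$ this forces $\bar{x}-x\in\operatorname{int}K$ when $t<1$, contradicting $\bar{x}-x\in\partial K$, while $t=1$ would give $x=f(x)$, an honest fixed point with $x\neq 0$ (since $x=0$ would put $\bar{x}$ on $\partial K$, impossible as $\bar{x}\gg_K 0$), in which case we are already done. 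Hence the upper boundary carries no admissible zero.

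The main obstacle is the lower part of $\partial\Omega$, where $x\in\partial K$: there $K$-sup-monotonicity controls $f$ only from above, so a priori $f(x)$ may lie outside $K$ and the term $t f(x)$ could drag a zero onto $\partial K$. The decisive step is therefore to establish positivity of $f$ on the region, $f(C)\subseteq K$; this is precisely where the interference-function framework $f\colon K\to K$ together with $\mathbb{R}^{N}_{+}\subset K$ enters, and I expect verifying it to be the crux, as it cannot be read off from $K$-sup-monotonicity alone. Granting $f(x)\in K$, the same cone computation closes the argument: writing $x=t f(x)+(1-t)p$ with $t f(x)\in K$ and $(1-t)p\in\operatorname{int}K$ gives $x\in\operatorname{int}K$ for $t<1$, contradicting $x\in\partial K$, while $t=1$ again only produces a genuine fixed point.

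With both boundary parts excluded, Homotopy Invariance yields $\operatorname{deg}(\Phi,\Omega,0)=\operatorname{deg}(\operatorname{id}-p,\Omega,0)=1\neq 0$, so by the Existence property there is $x^{*}\in\Omega$ with $f(x^{*})=x^{*}$. Finally, membership in the open order interval $\Omega=\operatorname{int}K\cap(\bar{x}-\operatorname{int}K)$ forces $x^{*}\gg_K 0$, and in particular $0<_K x^{*}$, which is the asserted strict positivity.
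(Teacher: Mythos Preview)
Your approach and the paper's diverge at an essential point, and that is exactly where your argument breaks. The paper does not work on a single order interval; it runs a \emph{two-region} degree computation. Using the standing hypothesis \ref{lab:S1} that $\mathcal{S}_f$ is bounded, it first shows $\operatorname{deg}(\mathbb{I}-f,\mathcal{D}(0,R),0)=0$ on a large ball, via the translation homotopy $\mathbb{I}-f-t\bar{x}$ with $\bar{x}\in K$ chosen beyond $\mathcal{S}_f$. Then $K$-sup-monotonicity together with strong feasibility gives invariance of the symmetric set $D$ built from $D_+=\{z\geqslant_K 0\mid z\leqslant_K x'\}$, hence $\operatorname{deg}(\mathbb{I}-f,D,0)=1$. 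Additivity now forces a fixed point in $\mathcal{D}(0,R)\setminus D$; since $0\in\operatorname{int}D$, this fixed point is automatically nonzero, whence $0<_K x^*$.

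Your single-interval scheme never invokes \ref{lab:S1}, and this is fatal for the strict positivity claim. Take $K=\mathbb{R}^N_+$ and $f(x)=\tfrac12 x$: it is monotone (hence $K$-sup-monotone), every $\bar{x}\gg 0$ is strongly feasible, and $f(K)\subset K$, yet the only fixed point is $0$. In your homotopy the unique zero of $H(\cdot,t)$ is $\tfrac{1-t}{2-t}\,\bar{x}$, which slides to $0\in\partial\Omega$ as $t\to 1$; you note that $t=1$ on the lower boundary ``only produces a genuine fixed point'', but here that fixed point is $0$, and $0\not<_K 0$. Granting $f(C)\subset K$ does not rescue this. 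The paper's outer degree-$0$ shell, manufactured from the boundedness of $\mathcal{S}_f$---which the Remark immediately following the theorem calls ``crucial''---is precisely the device that expels the fixed point from a neighbourhood of the origin; without it the conclusion $0<_K x^*$ is unreachable.
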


\begin{proof}
  {\em Step 1}. We can extend the domain of $f$ to whole $\mathbb{R}^{N}$ symmetrically, i.e. $f(x) = f(z)$, for $z_i = |x_i|$, $i = 1, \ldots, n$.

  {\em Step 2}. We will show that there exits $\overline{x} > 0$ such that for every $x \in \mathbb{R}^N$:
  \begin{equation}
    x - f(x) - \overline{x} \neq 0.
  \end{equation}
  %Boundedness of $\mathcal{S}_{f}$ implies that there exists $\overline{x} > 0$ such that $x \not\geqslant_K f(x) + \overline{x}$, for all $x \in \mathbb{R}^{N}$.
  %Here $\not\geq$ means that the order relation does not take place. Indeed, suppose by contradiction that for all $\overline{x} > 0$ there exists $x$ such that $x \geqslant_K f(x) + \overline{x} \geqslant_K f(x)$.
  %Then $x \in \mathcal{S}_{f}$, but $x \geqslant_K \overline{x}$ and $\overline{x}$ can have entries arbitrary large.
  Since $\mathcal{S}_{f}$ is bounded there exists $R > 0$ such that $\mathcal{S}_{f} \subset \mathcal{B}(0, R)$ (in fact in any norm). Let $\|x\|_1 := \sum_{i = 1}^N |x_i|$.
  So there exists $\alpha > 0$ such that for every $x \in \mathbb{R}^N$ we have $\|x\| \geq \alpha \|x\|_1$.
  Let $v \in K \cap \operatorname{int} \mathbb{R}^N_{+}$ and $k := \frac{R}{\alpha \|x\|_1}$.
  Now define $\overline{x} := kv$.

  %Let $\mathcal{S}_{f}_{t} := \{x \in \mathbb{R}^{N} \suchthat x \geqslant_K f(x) + t \overline{x}\}$, $t \in [0, 1]$.
  %Notice that if $t^{\prime} \geq t$ and $x \in \mathcal{S}_{f}_{t^{\prime}}$, then $x \geqslant_K f(x) + t^{\prime}x \geqslant_K f(x) + tx$, so $x \in \mathcal{S}_{f}_{t}$ and $\mathcal{S}_{f}_{t^{\prime}} \subset \mathcal{S}_{f}_{t}$.
  %Therefore, $\mathcal{S}_{f}_{t} \subset \mathcal{S}_{f}$, for all $t \in [0, 1]$. Thus, if
  %$\|x\| = R$, then $x \notin \mathcal{S}_{f}_{t}$, for all $t \in [0, 1]$. In particular $x - f(x) - t \overline{x} \neq 0$.
  %Moreover, $\mathcal{S}_{f}_{0} = \mathcal{S}_{f}$ and $\mathcal{S}_{f}_{1} = \varnothing$.
  By contradiction assume that there exists $x \in \mathbb{R}^N$ such that $x - f(x) - \overline{x} = 0$. Then $x = f(x) + \overline{x} \in \operatorname{int} \mathbb{R}^N_{+}$.
  Moreover, $f(x) \leqslant_K x$, since $\overline{x} \in K$, so $x \in \mathcal{S}_{f}$. Therefore, $\|x\| < R$. Since $x = f(x) + \overline{x} \geqslant_K \overline{x}$, then
  \begin{equation}
    \|x\| \geq \alpha \|x\|_1 \geq \alpha \|k v\|_1 = \alpha k \|v\|_1 \geq R.
  \end{equation}
  This leads to a contradiction with the choice of $x$.

  {\em Step 3}. We will show, that for each $x \in \partial \mathcal{B}(0, R)$ and $t \in [0, 1]$ we have $x - f(x) - t \overline{x} \neq 0$.Suppose that there exists $x \in \partial \mathcal{B}(0, R)$ and $t \in [0, 1]$ such that 
  \begin{equation}
    x = f(x) + t \overline{x} \in \operatorname{int} \mathbb{R}^N_{+}.
  \end{equation}
  Then $f(x) \leqslant_K x$, so $\|x\| < R$; a contradiction.

  {\em Step 4}. Consider the homotopy $H_{t} := \mathbb{I} - f - t \overline{x}$.
  For $\|x\| = R$ we have $H_{t}(x) \neq 0$, for all $t \in [0, 1]$ which means that the degree of $H_{t}$ relative to $D_R := \mathcal{D}(0,R)$ at 0 is well defined.
  By homotopy invariance property of topological degree we know that
  \begin{align*}
    \operatorname{deg}\left(\mathbb{I} - f, D_R, 0\right) & = \operatorname{deg}\left(H_{0}, D_R, 0\right) = \\
    \operatorname{deg}\left(H_{1}, D_R, 0\right) & = d\left(\mathbb{I} - f - \overline{x}, D_R, 0\right).
  \end{align*}
  Since $\mathcal{S}_{f} = \varnothing$, then there is no $x \in \overline{D}_R$, for which $x - f(x) - \overline{x} = 0$.
  Hence, by the contraposition of the existence property we have
  \begin{equation}
    \operatorname{deg}\left(\mathbb{I} - f, D_R, 0\right) = 0.
  \end{equation}

  {\em Step 5}.

  Define $D_{+} := \{z \leqslant_K 0 \suchthat z \leqslant_K x^{\prime}\}$ and
  \begin{equation}
    D := \{z = (z_1, \ldots, z_n) \in \mathbb{R}^{N} \suchthat (|z_1|, \ldots, |z_n|) \in D_{+}\}.
  \end{equation}
  By $K$-sup-monotonicity $D$ is convex, bounded, closed and $0 \in \operatorname{int} D$. It is enough to show that for $x \in D_{+}$ we have $f(x) \in D_{+}$. 
  We need to check this property only for $x \in \partial D_{+} \cap \operatorname{int} \mathbb{R}^N_{+}$. For such $s$ we have $x \leqslant_K x^{\prime}$. By $K$-sup-monotonicity we have
  \begin{equation}
    f(x) \leqslant_K \big\{x^{\prime}, f(x^{\prime}) \big\} = x^{\prime},
  \end{equation}
  because $f(x^{\prime}) \leqslant_K x^{\prime}$.

  {\em Step 6}.

  If $x = f(x)$, for some $x \in \partial D_{+} \cap \operatorname{int} \mathbb{R}^N_{+}$, then we have the thesis of the theorem.
  If $x \neq f(x)$, then in particular $x \neq f(x)$, for $x \in \partial D$. Consider homotopy
  \begin{equation}
    H_t(x) := \mathbb{I} - tf(x).
  \end{equation}
  If $x = tf(x)$ on $\partial D$, then $x = tf(x) \ll_K f(x) \leqslant_K x$ which is a contradiction. By the homotopy property of topological degree we have
  \begin{equation}
    \operatorname{deg}(H_0, D, 0) = \operatorname{deg}(H_1, D, 0) = \operatorname{deg}(\mathbb{I}-f, D, 0).
  \end{equation}
  By the Additivity property there must exists a fixed point in $(\mathcal{B} \setminus D) \cap \mathbb{R}^N_{+}$.

\end{proof}

\begin{remark}
  In order to show the above theorem we do not needed the monotonicity of the mapping $f$, but boundedness of $\mathcal{S}_{f}$ was crucial.
  %However, in what follows we will see, that the monotonicity is important on the smaller ball $D_1$.

  Assumption~\ref{lab:S1} replaces scalability, but may be hard to check.
\end{remark}

As an immediate consequence of Theorem above, we obtain that existence of fixed points if function $f$ is monotone.
More precisely, we have

\begin{corollary}
  If $f$ is $K$-monotone, then the theorem follows. In particular, theorem is valid for $K = \mathbb{R}^N_{+}$ \cite{persson2006fixed}.
\end{corollary}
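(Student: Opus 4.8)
The plan is to deduce the corollary directly from Theorem~\ref{theorem:degreerzero} by observing that $K$-monotonicity is merely a strengthening of the $K$-sup-monotonicity hypothesis used there. Concretely, I would first invoke the elementary implication recorded earlier in the text: whenever $f$ is $K$-monotone and $x <_K x^{\prime}$, transitivity of $\leqslant_K$ together with $f(x^{\prime}) \leqslant_K \sup\{x^{\prime}, f(x^{\prime})\}$ yields $f(x) \leqslant_K f(x^{\prime}) \leqslant_K \sup\{x^{\prime}, f(x^{\prime})\}$, so $f$ is $K$-sup-monotone. Thus a $K$-monotone map automatically satisfies the monotonicity hypothesis of the theorem.

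Second, I would note that the remaining standing assumptions of Theorem~\ref{theorem:degreerzero}, namely $\mathbb{R}^{N}_{+} \subset K$ and strong feasibility of $f$, are exactly the hypotheses carried into the corollary; passing from sup-monotone to monotone does not weaken them. With these in place, I would apply Theorem~\ref{theorem:degreerzero} verbatim to obtain a point $x^{*} \in \mathbb{R}^{N}$ with $f(x^{*}) = x^{*}$ and $0 <_K x^{*}$. No new degree-theoretic argument is required, since the construction in the proof of the theorem passes through unchanged: it invokes $f$ only through its sup-monotonicity, which monotonicity supplies.

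Finally, for the special case $K = \mathbb{R}^{N}_{+}$, I would point out that the inclusion $\mathbb{R}^{N}_{+} \subset K$ holds trivially (with equality), and that $K$-monotonicity with respect to $\mathbb{R}^{N}_{+}$ is precisely the componentwise monotonicity appearing in Yates' framework. Hence the statement specializes to the classical existence result of Persson~\cite{persson2006fixed}, confirming the final assertion of the corollary.

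Since the corollary is obtained by imposing a \emph{stronger} hypothesis, there is essentially no obstacle to overcome. The only point requiring care is to confirm that every step in the proof of Theorem~\ref{theorem:degreerzero} uses $f$ only through its sup-monotonicity together with the boundedness of $\mathcal{S}_{f}$ (which is subsumed by strong feasibility and Assumption~\ref{lab:S1}), so that replacing sup-monotonicity by the stronger monotonicity introduces no hidden gap in the homotopy and additivity arguments.
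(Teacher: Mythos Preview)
Your proposal is correct and matches the paper's treatment: the corollary is stated there as an immediate consequence of Theorem~\ref{theorem:degreerzero}, with no separate proof given, precisely because (as the paper notes in an earlier Remark) $K$-monotonicity implies $K$-sup-monotonicity. Your reduction via $f(x) \leqslant_K f(x^{\prime}) \leqslant_K \sup\{x^{\prime}, f(x^{\prime})\}$ is exactly the observation the paper has in mind.
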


\begin{corollary}
  Instead of monotonicity, we can take $x^{\prime} \in \mathcal{S}_{f}$, $x^{\prime} \gg_K 0$ and assume that for every $x, \overline{x} \in \mathbb{R}^N$ we have
  \begin{equation}
    x \leqslant_K \overline{x} \Rightarrow \|f(x)\|_{x^{\prime}} \leq \|f(\overline{x})\|_{x^{\prime}}.
  \end{equation}
  Then there exits a fixed point of $f$.
\end{corollary}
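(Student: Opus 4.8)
The plan is to re-run the topological-degree argument behind Theorem~\ref{theorem:degreerzero}, changing only the single place where monotonicity was actually used. First I would revisit that proof and observe that Steps~1--4 --- the symmetric extension of $f$ to all of $\mathbb{R}^{N}$, the construction of the shift vector $\overline{x}$, and the homotopy that forces $\operatorname{deg}(\mathbb{I}-f, D_R, 0) = 0$ on a large ball --- invoke only continuity of $f$ and boundedness of $\mathcal{S}_{f}$ (Condition~\ref{lab:S1}, exploited as in Remark~\ref{remark:Sbounded}). Since $K$-sup-monotonicity never enters there, these steps carry over verbatim. The hypothesis being weakened was used solely to produce an inner region on which $\operatorname{deg}(\mathbb{I}-f, \cdot, 0) = 1$, so that is the only piece I must rebuild.

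For the rebuilt inner region I would use the weighted norm $\|\cdot\|_{x^{\prime}}$ attached to the strongly feasible point $x^{\prime} \gg_K 0$ and take the symmetric box $D := \{z \in \mathbb{R}^{N} \mid \|z\|_{x^{\prime}} \leq \rho\}$ for a radius $\rho$ to be fixed; this set is convex, compact, symmetric about the origin, and contains $0$ in its interior, exactly as the degree computation needs. The norm-monotone hypothesis is what controls $f$ here: for $z \in D$ the symmetric extension gives $f(z) = f(|z|)$, and $\|z\|_{x^{\prime}} \leq \rho$ means $|z| \leqslant_{\mathbb{R}^{N}_{+}} \rho x^{\prime}$, hence $|z| \leqslant_K \rho x^{\prime}$ because $\mathbb{R}^{N}_{+} \subset K$. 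The hypothesis then yields $\|f(z)\|_{x^{\prime}} = \|f(|z|)\|_{x^{\prime}} \leq \|f(\rho x^{\prime})\|_{x^{\prime}}$, so the supremum of $\|f\|_{x^{\prime}}$ over the box is attained at the corner $\rho x^{\prime}$; Lemma~\ref{lm2} lets me translate such norm bounds back into $K$-order bounds when needed.

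With this control I would run the homotopy $H_t := \mathbb{I} - tf$, $t \in [0,1]$, on $\partial D$. A zero $x = tf(x)$ with $\|x\|_{x^{\prime}} = \rho$ forces $\rho = t\|f(x)\|_{x^{\prime}} \leq \|f(\rho x^{\prime})\|_{x^{\prime}}$, so choosing $\rho$ with $\|f(\rho x^{\prime})\|_{x^{\prime}} < \rho$ makes $H_t$ admissible; homotopy invariance then gives $\operatorname{deg}(\mathbb{I}-f, D, 0) = \operatorname{deg}(\mathbb{I}, D, 0) = 1$. Finally, additivity against the large ball gives $\operatorname{deg}(\mathbb{I}-f, D_R \setminus \overline{D}, 0) = -1 \neq 0$, so the existence property produces a fixed point; the symmetric construction of Steps~1 and~5 places it in $\operatorname{int}\mathbb{R}^{N}_{+}$, whence $0 <_K x^{*}$.

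The main obstacle is the selection of $\rho$, which amounts to reconciling the $K$-order feasibility $f(x^{\prime}) \leqslant_K x^{\prime}$ with a genuine bound on $\|f(x^{\prime})\|_{x^{\prime}}$. Because $\mathbb{R}^{N}_{+} \subset K$ makes $\leqslant_K$ strictly coarser than the coordinatewise order, $f(x^{\prime}) \leqslant_K x^{\prime}$ does not by itself deliver $\|f(x^{\prime})\|_{x^{\prime}} \leq 1$, so $\rho = 1$ cannot be used naively. The delicate part will be to absorb the cone-distortion constant $\delta(K)$ of Lemmas~\ref{lm0}--\ref{lm1} --- which is precisely what converts two-sided $K$-order control into a $\|\cdot\|_{x^{\prime}}$ estimate --- and to use the strictness afforded by $x^{\prime} \gg_K 0$ to certify that $H_t$ has no zero on $\partial D$ for the chosen radius.
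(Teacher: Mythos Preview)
Your plan is exactly the paper's: keep Steps~1--4 of Theorem~\ref{theorem:degreerzero} unchanged and replace only the sup-monotonicity invariance argument in Step~5 by the norm-monotone hypothesis. The paper's entire proof of the corollary is one line: for $x$ on $\partial D$ one has $x\leqslant_K x'$, so $\|f(x)\|_{x'}\le\|f(x')\|_{x'}\le 1$, hence $f(x)\in D$; Steps~6 onward then run verbatim.

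The ``main obstacle'' you flag --- the choice of $\rho$ --- does not appear in the paper at all: it keeps the same set $D$ as in the theorem (effectively $\rho=1$) and reads $\|f(x')\|_{x'}\le 1$ directly from $x'\in\mathcal{S}_f$, with no appeal to $\delta(K)$ or Lemmas~\ref{lm0}--\ref{lm1}. Your concern that $f(x')\leqslant_K x'$ need not give a coordinatewise bound when $\mathbb{R}^N_+\subsetneq K$ is a fair reading of the stated generality, but the paper does not engage with it; the bound is treated as immediate. So your proposal matches the paper once you drop the free parameter and simply take $\rho=1$.
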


\begin{proof}
  It is enough to note that on boundary of $D$ we have $x \leqslant_K x^{\prime}$, so
  \begin{equation}
    \|f(x)\|_{x^{\prime}} \leq \|f(\overline{x})\|_{x^{\prime}} \leq 1,
  \end{equation}
  so $f(x) \in D$.
\end{proof}

We close this section by noticing that sup-monotonicity is enough for existence of fixed points.
\begin{corollary}
  Instead of monotonicity we assume that for each $x, x^{\prime} \in \mathbb{R}^N_{+}$ such that $x < f(x)$ we have %TODO CZY K czy mathbb{R}^N_+????
  \begin{equation}
    %TODO tutaj inna nierownosc: \leqslant_K zamiast <_K ????
    x \leqslant_K x^{\prime} \Rightarrow f(x) \leqslant_K \sup \big\{x^{\prime}, f(x^{\prime}) \big\}.
  \end{equation}
  Then there exits a fixed point of $f$.
\end{corollary}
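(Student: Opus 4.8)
The plan is to re-run the proof of Theorem~\ref{theorem:degreerzero} and to verify that its single use of $K$-sup-monotonicity can be replaced by the weaker implication assumed here. First I would recall that in that proof sup-monotonicity is invoked only in Steps~5 and~6, to control $f$ on the boundary of the order interval $D_{+} = \{z \mid 0 \leqslant_K z \leqslant_K x^{\prime}\}$, where $x^{\prime} \gg_K 0$ is the strongly feasible point with $f(x^{\prime}) \leqslant_K x^{\prime}$; every other step (the symmetric extension of $f$, the construction of $\overline{x}$, and the degree computation on the large ball) is untouched. So the whole matter reduces to showing that at each boundary point where the implication is needed, the premise $x <_K f(x)$ holds automatically, so that the restricted hypothesis is applicable.

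To that end I would analyse the zeros of the homotopy $H_t := \mathbb{I} - t f$ on $\partial D$ for $t \in [0,1]$. If $H_1$ vanishes somewhere on $\partial D$ we are already finished, since such a zero is a fixed point of $f$; so assume it does not. The value $t = 0$ is harmless because $0 \in \operatorname{int} D$. For $t \in (0,1)$ and a boundary point $x \in \partial D_{+} \cap \operatorname{int}\mathbb{R}^{N}_{+}$, a zero means $f(x) = x/t$, and since $x \gg 0$ and $t < 1$ we get $f(x) - x = \tfrac{1-t}{t}\,x \gg 0$; using $\mathbb{R}^{N}_{+} \subset K$ this reads $x <_K f(x)$, which is exactly the premise of the hypothesis. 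Applying the restricted implication with the feasible $x^{\prime}$ gives $f(x) \leqslant_K \sup\{x^{\prime}, f(x^{\prime})\} = x^{\prime}$, hence $x/t \leqslant_K x^{\prime}$, i.e. $x \leqslant_K t x^{\prime}$. Since $t < 1$ and $x^{\prime} \gg_K 0$ we obtain $x^{\prime} - x \geqslant_K (1-t)x^{\prime} \in \operatorname{int} K$, so $x \ll_K x^{\prime}$; and because $\mathbb{R}^{N}_{+} \subset K$ forces $\operatorname{int}\mathbb{R}^{N}_{+} \subset \operatorname{int} K$ we also have $x \gg_K 0$. Together these place $x$ in $\operatorname{int} D_{+}$, contradicting $x \in \partial D_{+}$.

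Thus $H_t$ has no zero on $\partial D \cap \operatorname{int}\mathbb{R}^{N}_{+}$ for any $t$, and the boundary points of $D$ lying outside $\operatorname{int}\mathbb{R}^{N}_{+}$ are treated by the same computation applied to the reflected point $(|x_1|,\ldots,|x_N|)$, using the symmetric extension $f(x) = f(|x_1|,\ldots,|x_N|)$ from Step~1 of the referenced proof. Consequently $H_t$ does not vanish on $\partial D$ for any $t \in [0,1]$, so by homotopy invariance and the Units property of the degree, $\operatorname{deg}(\mathbb{I} - f, D, 0) = \operatorname{deg}(\mathbb{I}, D, 0) = 1 \neq 0$; the existence property then produces a point $x \in D$ with $x = f(x)$.

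I expect the genuine obstacle to be the bookkeeping on the non-positive part of $\partial D$: the premise $x <_K f(x)$ is transparent only inside the positive orthant, where $f(x) = x/t$ already dominates $x$ coordinatewise, and one must check that transporting it through the reflection $x \mapsto (|x_1|,\ldots,|x_N|)$ remains compatible with both the restricted hypothesis and the symmetric extension of $f$ (in particular, since $K$ may contain points with negative coordinates, one must confirm that the reflected candidate zero still lands where the implication can be invoked). A minor point is the degenerate boundary case $x = x^{\prime}$, which is settled directly by feasibility, $f(x^{\prime}) \leqslant_K x^{\prime}$, rather than by the implication itself.
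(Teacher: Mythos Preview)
Your proposal is correct and follows essentially the same route as the paper. The paper's proof is a three-line remark: on $\partial D$, if $x = t f(x)$ with $t\in(0,1)$ then $f(x) > x$, so the restricted hypothesis applies and gives $f(x) \leqslant_K \sup\{x^{\prime},f(x^{\prime})\} = x^{\prime}$, i.e.\ $f(x)\in D$; your argument is the same observation, carried out with more care about why $x <_K f(x)$ holds, why this contradicts $x\in\partial D_{+}$, and how the symmetric extension handles the non-positive faces of $\partial D$.
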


\begin{proof}
  In the proof of above theorem we assume that $x \neq f(x)$ on $\partial D$ and $x = tf(x)$, for $t \in (0, 1)$. Thus $f(x) > x$, so by above assumption
  \begin{equation}
    f(x) \leqslant_K \sup \big\{x^{\prime}, f(x^{\prime}) \big\} = x^{\prime},
  \end{equation}
  so $f(x) \in D$.
\end{proof}

\section{Existence of several fixed points}

We now proceed to study existence of several fixed points.
In particular, the next theorem proves that, with an appropriate arrangement of points and inequalities satisfied, there exist at least three fixed points of a mapping.

%****************************************

%Nie wiem, czy to się kiedyś wykorzysta, ale myślę, że prawdziwe jest twierdzenie:

\begin{theorem}
  Let $f \colon K \to \mathbb{R}^{N}$ be a continuous $K$-monotone map.
  Assume that the set $\mathcal{S}_{f} = \{x \geqslant_K 0 \suchthat f(x) \leqslant_K x\}$ is bounded, there are two points $x^{\prime}, x^{\second} >_K 0$ in $\mathcal{S}_{f}$ such that $f(x^{\prime}) \ll_K x^{\prime}$ and $f(x^{\second}) \ll_K x^{\second}$, and a point $x^{\prime} \leqslant_K x \leqslant_K x^{\second}$ such that $x \ll_K f(x)$.
  Then there exist at least three fixed points of $f$.
\end{theorem}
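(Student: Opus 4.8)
The plan is to obtain two of the fixed points by monotone iteration and to force the third one by a topological–degree bookkeeping argument modelled on the proof of Theorem~\ref{theorem:degreerzero}. First I would produce two comparable fixed points. Since $f$ is $K$-monotone it leaves $K$ invariant, so the orbit of $x^{\prime}$ stays in $K$; as $f(x^{\prime})\ll_K x^{\prime}$, monotonicity forces $\big(f^{n}(x^{\prime})\big)_{n}$ to be $K$-nonincreasing, and Lemma~\ref{lem:convergenve} gives $f^{n}(x^{\prime})\to p_{1}$ with $f(p_{1})=p_{1}$ and $p_{1}\leqslant_K f(x^{\prime})\ll_K x^{\prime}$, hence $p_{1}\ll_K x^{\prime}$. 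Dually, starting from $x$ with $x\ll_K f(x)$ and using $x\leqslant_K x^{\prime\prime}$ together with the barrier $f(x^{\prime\prime})\ll_K x^{\prime\prime}$, the sequence $\big(f^{n}(x)\big)_{n}$ is $K$-nondecreasing and trapped in the order interval $[x,x^{\prime\prime}]:=(x+K)\cap(x^{\prime\prime}-K)$; applying Lemma~\ref{lem:convergenve} to the $K$-nonincreasing sequence $x^{\prime\prime}-f^{n}(x)\in K$ shows $f^{n}(x)\to p_{3}$ with $f(p_{3})=p_{3}$ and $x\ll_K p_{3}\leqslant_K x^{\prime\prime}$. As $p_{1}\ll_K x^{\prime}\leqslant_K x\ll_K p_{3}$, the points $p_{1}$ and $p_{3}$ are already distinct.

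For the degree argument I would first record that the hypotheses in fact force $x^{\prime},x^{\prime\prime}\gg_K 0$: invariance gives $f(x^{\prime})\in K$, so $x^{\prime}=f(x^{\prime})+(x^{\prime}-f(x^{\prime}))\in K+\operatorname{int}K\subseteq\operatorname{int}K$, and likewise for $x^{\prime\prime}$. Consequently both $\operatorname{int}[x,x^{\prime\prime}]$ and $\operatorname{int}[0,x^{\prime}]$ are nonempty; here $x\ll_K x^{\prime\prime}$ because $x^{\prime\prime}-x=(x^{\prime\prime}-f(x^{\prime\prime}))+(f(x^{\prime\prime})-f(x))+(f(x)-x)\in\operatorname{int}K$. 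On each interval $f$ is a self-map: for $z\in[x,x^{\prime\prime}]$ monotonicity gives $x\ll_K f(x)\leqslant_K f(z)\leqslant_K f(x^{\prime\prime})\ll_K x^{\prime\prime}$, so $f(z)\in\operatorname{int}[x,x^{\prime\prime}]$ and in particular $f$ has no fixed point on $\partial[x,x^{\prime\prime}]$; for $z\in[0,x^{\prime}]$ one has $0\leqslant_K f(0)\leqslant_K f(z)\leqslant_K f(x^{\prime})\ll_K x^{\prime}$. Contracting $f$ to a constant interior value by the straight-line homotopy (which remains inside the convex, bounded interval, boundedness coming from Lemma~\ref{lem:boundedintersection}) then yields $\operatorname{deg}(\mathbb{I}-f,\operatorname{int}[x,x^{\prime\prime}],0)=\operatorname{deg}(\mathbb{I}-f,\operatorname{int}[0,x^{\prime}],0)=1$. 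Write $\Omega_{1}:=\operatorname{int}[0,x^{\prime}]$ and $\Omega_{2}:=\operatorname{int}[x,x^{\prime\prime}]$; these are disjoint, since $z\in\Omega_{1}$ forces $z\ll_K x^{\prime}\leqslant_K x$ while $z\in\Omega_{2}$ forces $z\gg_K x$.

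To close the argument I would invoke the large-ball degree. Choosing $R$ with $\mathcal{S}_{f}\subset\mathcal{B}(0,R)$ and repeating Steps~1--4 of the proof of Theorem~\ref{theorem:degreerzero} gives $\operatorname{deg}(\mathbb{I}-f,D_{R},0)=0$ for $D_{R}:=\mathcal{D}(0,R)$. Now either $\mathbb{I}-f$ has a zero in $\overline{D_{R}}\setminus(\Omega_{1}\cup\Omega_{2})$ — which is a third fixed point, automatically distinct from $p_{1}\in\Omega_{1}$ and $p_{3}\in\Omega_{2}$ — or it does not, in which case the Additivity property of the degree forces $0=\operatorname{deg}(\mathbb{I}-f,D_{R},0)=\operatorname{deg}(\mathbb{I}-f,\Omega_{1},0)+\operatorname{deg}(\mathbb{I}-f,\Omega_{2},0)=1+1=2$, a contradiction by the Existence property. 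Hence a third fixed point must exist.

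The main obstacle is the degree bookkeeping on the lowest region. On $\partial[0,x^{\prime}]$ the upper face is clean ($f(z)\ll_K x^{\prime}$ gives $f(z)\neq z$ there), but on the part lying in $\partial K$ only $f(0)\geqslant_K 0$ — not $f(0)\gg_K 0$ — is available, so ruling out a zero of $\mathbb{I}-f$ on that face, and thereby making $\operatorname{deg}(\mathbb{I}-f,\Omega_{1},0)$ well defined and equal to $1$, is the delicate step. This is exactly the difficulty that the symmetric extension of Theorem~\ref{theorem:degreerzero} removes: under the standing assumption $\mathbb{R}^{N}_{+}\subset K$ of this section one replaces $[0,x^{\prime}]$ by the corresponding symmetric box, in which $0$ becomes an interior zero and the boundary behaviour is strict, and I would transfer that device verbatim. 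A fixed point that happens to appear on $\partial K$ in the non-symmetric treatment is harmless, since it only increases the count.
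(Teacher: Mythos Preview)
Your degree argument coincides with the paper's: compute degree $1$ on the symmetrized lower box built from $[0,x^{\prime}]$ and degree $1$ on the order interval $[x,x^{\second}]$, use boundedness of $\mathcal{S}_{f}$ to obtain degree $0$ on a large ball, and conclude by additivity that the complement carries degree $-2$ and hence a third fixed point. The monotone-iteration portion of your argument is correct but redundant---the nonzero degrees on the two inner regions already deliver those fixed points---and your identification of the symmetric-extension device as the remedy for the $\partial K$ boundary difficulty is precisely what the paper invokes.
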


\begin{proof}
  We can check that 
  \begin{equation}
    \operatorname{deg}\left(\mathbb{I}-f, \operatorname{int} D, 0\right) = 1,
  \end{equation}
  where $D := \{z = (z_1, \ldots, z_n) \in \mathbb{R}^{N} \suchthat (|z_1|, \ldots, |z_n|) \in D_{+}\}$ and $D_{+} := \{z \geqslant_K 0 \suchthat z \leqslant_K x^{\prime}\}$.
  This implies that at least one fixed point is in $\operatorname{int} D_{+}$.
  Note that the above computation of the degree is possible because the $K$-monotonicity and the assumption $f(x^{\prime}) \ll_K x^{\prime}$ imply that there are no fixed points of $f$ on the boundary of $D$.

  We also can check that 
  \begin{equation}
    \operatorname{deg}\left(\mathbb{I} - f, \operatorname{int} D^{\prime}, 0\right) = 1,
  \end{equation}
  where $D^{\prime} := \{z \geqslant_K 0 \suchthat x \leqslant_K z \leqslant_K x^{\second}\}$.
  Here also assumptions on $x, x^{\second}$ and $K$-monotonicity play a role.

  The additivity property of the degree implies that $\operatorname{deg} \left(\mathbb{I} - f, \mathcal{D}(0,R) \setminus (D\cup D^{\prime}), 0\right) = -2$.
  Hence, there is a fixed point of $f$ in $\mathcal{D}(0,R) \setminus (D\cup D^{\prime})$. This completes the proof.
\end{proof}

%For the above order, the following theorem holds.
\begin{theorem}\label{thm5}
  Let $f \colon K \to \mathbb{R}^{N}$ be a continuous mapping.
  Suppose there is $x^{\prime} > 0$ in set $\mathcal{S}_{f} = \{x \geqslant_K 0 \suchthat f(x) \leqslant_K x\}$ and $f$ is $K$-monotonic with respect to the cone $K = C({w},\beta)$, i.e. $x \leqslant_K y$ implies $f(x) \leqslant_K f(y)$.
  Then there exists a fixed point of $f$.
\end{theorem}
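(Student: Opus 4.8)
The plan is to reduce the statement to the monotone iteration already developed in Lemma~\ref{lemma:feasbalecontraction}, so that the only genuinely new work is checking that the ice cream cone $K = C(w,\beta)$ satisfies the hypotheses of the supporting convergence lemmas. First I would record that, under Assumption (A.1), the opening angle satisfies $\alpha_K < \pi/2$; for $C(w,\beta)$ this forces $\beta > 0$, which rules out the degenerate half-space $\beta = 0$ (not pointed) while $\alpha_K > 0$ keeps the cone from collapsing to the ray $\mathbb{R}_+ w$ (not solid). Thus $K$ is a solid pointed cone, which is exactly what Lemmas~\ref{lem:boundedintersection}, \ref{lem:diammeter}, and \ref{lem:convergenve} require.

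Next I would build the descending orbit from the feasible point. Since $f$ is $K$-monotone it leaves $K$ invariant, i.e. $f(K) \subset K$ (as noted in the remark following the definition of monotonicity), so putting $p_0 := x^{\prime}$ and $p_n := f^n(x^{\prime})$ keeps the whole orbit inside $K$, using that $x^{\prime} \in \mathcal{S}_f \subset K$. Feasibility gives $p_1 = f(x^{\prime}) \leqslant_K x^{\prime} = p_0$, and a straightforward induction using $K$-monotonicity yields $p_{n+1} = f(p_n) \leqslant_K f(p_{n-1}) = p_n$; hence $(p_n)_{n \geq 0}$ is a $K$-nonincreasing sequence lying in $K$.

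Now I would invoke Lemma~\ref{lem:convergenve} to conclude that $(p_n)$ converges to some $p^{*} \in K$. The fixed-point identity then follows from continuity, exactly as in the proof of Lemma~\ref{lemma:feasbalecontraction}: from $p^{*} \leqslant_K p_n$ and monotonicity we get $f(p^{*}) \leqslant_K f(p_n) = p_{n+1}$, and passing to the limit gives $f(p^{*}) \leqslant_K p^{*}$; conversely $p^{*} \leqslant_K p_{n+1} = f(p_n)$ together with continuity of $f$ gives $p^{*} \leqslant_K f(p^{*})$. Antisymmetry of $\leqslant_K$ (valid because $K$ is pointed, so $K \cap (-K) = \{0\}$) then forces $f(p^{*}) = p^{*}$, establishing the existence of a fixed point.

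The only real obstacle is the cone-geometry check in the first step: one must be careful that the generic solid-pointed-cone lemmas genuinely transfer to $C(w,\beta)$, ensuring both pointedness and solidity from the single hypothesis $\alpha_K < \pi/2$. Once this is secured, the remainder is a direct transcription of the monotone iteration, and in fact this theorem is essentially the existence half of Lemma~\ref{lemma:feasbalecontraction} restated for the ice cream cone (with scalability dropped, since scalability is only needed for the uniqueness conclusion).
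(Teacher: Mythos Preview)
Your argument is correct but takes a genuinely different route from the paper. You run the monotone iteration of Lemma~\ref{lemma:feasbalecontraction}: from the feasible point $x'$ you form the $K$-nonincreasing orbit $p_n=f^n(x')$, invoke Lemma~\ref{lem:convergenve} for convergence, and identify the limit as a fixed point via continuity and antisymmetry of $\leqslant_K$. The paper instead argues by topological degree: it extends $f$ symmetrically to all of $\mathbb{R}^N$, uses the (implicitly assumed) boundedness of $\mathcal{S}_f$ together with the homotopy $H_t=\mathbb{I}-f-tkw$ to obtain $\operatorname{deg}(\mathbb{I}-f,\mathcal{B}(0,R),0)=0$, then shows that the symmetrized order interval $D$ built from $x'$ is $f$-invariant so that $\operatorname{deg}(\mathbb{I}-f,\operatorname{int}D,0)=1$, and concludes by additivity that a fixed point lies in $\mathcal{D}(0,R)\setminus D$. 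Your route is more elementary, does not require $\mathcal{S}_f$ bounded, and places the fixed point inside $\{z\geqslant_K 0 : z\leqslant_K x'\}$. The paper's degree computation buys a sharper localization (a fixed point \emph{outside} $D$) and, more importantly, produces exactly the pair of degree values that are recycled in the subsequent multiple-fixed-point results (Theorems~\ref{thm6}--\ref{thm9}), where monotone iteration alone would not separate the distinct fixed points.
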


\begin{proof}
  %Ograniczoność zbioru $\mathcal{S}_{f}$ implikuje istnienie liczby $R > 0$ takiej, że $\mathcal{S}_{f}\subset \mathcal{B}(0,R)$ (obojętnie w jakiej normie).
  %Ustalmy normę $\|\cdot\|$. Jest ona równoważna z normą $\|\cdot\|_1$, więc istnieje $\alpha > 0$ takie, że $\|x\| \geq \alpha \|x\|_1$ dla każdego $x \in \mathbb{R}^{N}$.
  %Rozszerzamy $f$ na całe $\mathbb{R}^{N}$ jak u Perssona. Weźmy punkt $\bar x := k{w}$ taki, że $k \geq \frac{R}{\alpha \|{w}\|_1}$.
  %Pokażemy, że $x - f(x) - k{w} \neq 0$ dla każdego $x \in \mathbb{R}^{N}$. [Przechodzę na angielski]

  The boundedness of the set $\mathcal{S}_{f}$ implies the existence of a number $R > 0$ such that $\mathcal{S}_{f} \subset \mathcal{B}(0,R)$ (no matter what norm).
  Let us fix the norm $\|\cdot\|$. It is equivalent to the norm $\|\cdot\|_1$, so there is a $\alpha > 0$ such that $\|x\| \geq \alpha \|x\|_1$, for each $x \in \mathbb{R}^{N}$.
  We extend mapping $f$ on whole $\mathbb{R}^{N}$ as in \cite{persson2006fixed}. Let us take a point $\bar x := k{w}$ such that $k \geq \frac{R}{\alpha \|{w}\|_1}$.
  We will show that $x - f(x) - k{w} \neq 0$, for every $x \in \mathbb{R}^{N}$.

  Suppose there is some $x \in \mathbb{R}^{N}$ with $x = f(x) + k{w}$. Then $f(x) \leqslant_K x$, which implies that $x \in \mathcal{S}_{f}$ and, consequently, $\|x\| < R$.
  We also notice that $k{w} \leqslant_K x$ because $0 \leqslant_K f(x)$. Therefore
  \begin{equation}
    \|x\| \geq \alpha \|x\|_1 \geq \alpha \|k{w}\|_1 = \alpha k\|{w}\|_1 \geq \alpha \|{w}\|_1\frac{R}{\alpha \|{w}\|_1} = R;
  \end{equation}
  a contradiction.

  Note that for every $x$ with $\|x\| = R$ one has $x - f(x) - tk{w} \neq 0$, for every $t \in [0, 1]$.
  Indeed, assuming that $x = f(x) + tk{w}$ we obtain, as above, $f(x) \leqslant_K x$ which implies $x \in \mathcal{S}_{f}$ and, consequently, $\|x\| < R$.

  Now we use a homotopy $H_t := \mathbb{I} - f - tk{w}$ and obtain that $\operatorname{deg}\left(\mathbb{I} - f, \mathcal{B}(0,R), 0\right) = 0$.

  Consider the sets $D_{+} := \{z \geqslant_K 0 \suchthat z \leqslant_K x^{\prime}\}$ and
  \begin{equation}
    D := \{z = (z_1, \ldots, z_n) \in \mathbb{R}^{N} \suchthat (|z_1|, \ldots, |z_n|) \in D_{+}\}.
  \end{equation}
  Notice that $D_{+} = (x^{\prime} - K) \cap \mathbb{R}^{N}_{+}$ is a closed bounded set, and so is $D$. Moreover, $0$ is in the interior of $D$.

  We will show that $D$ is invariant with respect to $f$. Obviously, it is enough to prove that $f(x) \in D$, for $0 \leqslant_K x \in D$.
  To do this, let us assume that $0 \leqslant_K x \in \partial D$.
  Then $x \leqslant_K x^{\prime}$ (because $x - x^{\prime} \in -K$ implies that $x^{\prime} - x \in K$), and from the $K$-monotonicity we obtain $f(x) \leqslant_K f(x^{\prime})$.
  This implies that $f(x) \in f(x^{\prime}) - K$. Hence, $f(x) \in D$.

  Now, in a standard way we show that
  \begin{equation}
    \operatorname{deg}\left(\mathbb{I}-f, \operatorname{int} D, 0\right) = 1.
  \end{equation}

  The additivity property of the degree implies that $\operatorname{deg}\left(\mathbb{I} - f, \mathcal{D}(0,R) \setminus D, 0\right) = -1$.
  Hence, there is a fixed point of $f$ in $\mathcal{D}(0,R) \setminus D$. This completes the proof.
\end{proof}

Theorem~\ref{thm5} follows from the more general Theorem~\ref{thm6}.
\begin{theorem}
  \label{thm6}
  Let $f \colon \mathbb{R}^{N}_{+} \to \mathbb{R}^{N}_{+}$ be continuous and $K$-monotone for $K = C({w}, \beta)$.
  Assume that there is $x^{\prime} \gg_K 0$ in the set $\mathcal{S}_{f} = \{x \geqslant_K 0 \suchthat f(x) \leqslant_K x\}$, and $x^{\second} \gg_K x^{\prime}$ such that 
  \begin{equation}\label{eq11}
    x^{\second} \not\in f(x) + K,
  \end{equation}
  for any $x \in T := \{z \geqslant_K 0 \suchthat x^{\second} \in \partial (z + K)\}$.
  Then there exists a fixed point of $f$ in the set $\{z \geqslant_K 0 \suchthat z \leqslant_K x^{\second} \mbox{ and } z \not \ll_K x^{\prime}\}$.
\end{theorem}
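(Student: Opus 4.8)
The plan is to localise a fixed point by a topological-degree computation on two nested regions determined by $x^{\prime}$ and $x^{\prime\prime}$, reusing the symmetric-extension scheme of Theorem~\ref{thm5} and \cite{persson2006fixed}. First I would extend $f$ to all of $\mathbb{R}^{N}$ by $f(x):=f(|x|)$ with $|x|:=(|x_1|,\dots,|x_N|)$; this keeps $f$ continuous and, because $f$ takes values in $\mathbb{R}^{N}_{+}$, every fixed point of the extension automatically lies in $\mathbb{R}^{N}_{+}$. Set $D^{\prime}_{+}:=(x^{\prime}-K)\cap\mathbb{R}^{N}_{+}$ and $D^{\prime\prime}_{+}:=(x^{\prime\prime}-K)\cap\mathbb{R}^{N}_{+}$, with symmetric extensions $D^{\prime},D^{\prime\prime}:=\{z\suchthat|z|\in D_{+}\}$. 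By Lemma~\ref{lem:boundedintersection} (applied with $0\leqslant_K x^{\prime}\leqslant_K x^{\prime\prime}$ and $\mathbb{R}^{N}_{+}\subset K$) both are bounded and closed, with $0\in\operatorname{int}D^{\prime}\subset\operatorname{int}D^{\prime\prime}$ since the reflection absorbs the coordinate faces into the interior, exactly as in Theorem~\ref{thm5}; moreover $D^{\prime}\subset D^{\prime\prime}$ as $x^{\prime}\leqslant_K x^{\prime\prime}$. A key bookkeeping fact is that $\partial D^{\prime\prime}$ then consists precisely of the points $z$ with $|z|\in T$.

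On the inner region I would show $\deg(\mathbb{I}-f,\operatorname{int}D^{\prime},0)=1$ verbatim as in Theorem~\ref{thm5}: $K$-monotonicity together with $f(x^{\prime})\leqslant_K x^{\prime}$ forces $f(D^{\prime}_{+})\subset D^{\prime}_{+}$, so $D^{\prime}$ is invariant, and the homotopy $\mathbb{I}-tf$ (whose zeros on $\partial D^{\prime}$ would put $tf(z)\in\operatorname{int}D^{\prime}$, impossible) collapses the degree to that of the identity. If instead $f$ has a fixed point on $\partial D^{\prime}$, then $z=|z|$ with $|z|\in\partial D^{\prime}_{+}$ gives $z\not\ll_K x^{\prime}$ and $z\leqslant_K x^{\prime}\leqslant_K x^{\prime\prime}$, so that point already lies in the asserted set and we are finished; otherwise the degree is well defined and equals $1$.

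The heart of the argument is $\deg(\mathbb{I}-f,\operatorname{int}D^{\prime\prime},0)=0$, and this is where hypothesis~\eqref{eq11} enters. I would take $H_t:=\mathbb{I}-f-tk{w}$, where $w$ is the cone axis; note $w\in\mathbb{R}^{N}_{+}$ because each $e_i\in\mathbb{R}^N_+\subset K$ yields $w_i=\langle e_i,w\rangle\geq\beta\|w\|\geq0$, while $w\gg_K0$. I claim $H_t$ is admissible on $\partial D^{\prime\prime}$ for every $t\in[0,1]$: on $\partial D^{\prime\prime}\cap\mathbb{R}^{N}_{+}$ a zero would give $z\in T$ with $f(z)=z-tk{w}\leqslant_K z\leqslant_K x^{\prime\prime}$, i.e. $x^{\prime\prime}\in f(z)+K$, contradicting~\eqref{eq11}; off $\mathbb{R}^{N}_{+}$ a zero would force $z=f(|z|)+tk{w}\in\mathbb{R}^{N}_{+}$, contradicting $z\notin\mathbb{R}^{N}_{+}$. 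For $k$ large, $H_1=\mathbb{I}-f-k{w}$ has no zero in $\overline{D^{\prime\prime}}$ at all (a zero would entail $k{w}\leqslant_K x^{\prime\prime}$, impossible for large $k$ since $w\gg_K0$), so homotopy invariance and the existence property yield the vanishing degree.

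Finally, assuming no fixed point on $\partial D^{\prime}$ (the contrary case being already settled), additivity of the degree gives
\[
  \deg(\mathbb{I}-f,\operatorname{int}D^{\prime\prime}\setminus D^{\prime},0)=0-1=-1\neq0,
\]
so there is a fixed point $x^{*}\in\operatorname{int}D^{\prime\prime}\setminus D^{\prime}$. Being a fixed point, $x^{*}\in\mathbb{R}^{N}_{+}$, hence $x^{*}\leqslant_K x^{\prime\prime}$, while $x^{*}\notin D^{\prime}_{+}$ gives $x^{*}\not\leqslant_K x^{\prime}$ and a fortiori $x^{*}\not\ll_K x^{\prime}$, placing $x^{*}$ in the required set. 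The step I expect to be the main obstacle is the admissibility of the outward homotopy on the upper boundary $T$ for \emph{every} $t$: one must see that~\eqref{eq11} is exactly the condition killing the zeros of $H_t$ there, and must correctly argue that the symmetric extension turns the coordinate faces into interior points so that $\partial D^{\prime\prime}$ reduces to $\{\,|z|\in T\,\}$.
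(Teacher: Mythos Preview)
Your proposal is correct and follows essentially the same route as the paper's proof: symmetric extension of $f$, the two nested regions $D(x')\subset D(x'')$, the outward homotopy $H_t=\mathbb{I}-f-tkw$ to obtain degree~$0$ on $\operatorname{int}D(x'')$, invariance of $D(x')$ via $K$-monotonicity to obtain degree~$1$, and additivity to locate the fixed point in the annulus. Your write-up is in fact more careful than the paper's on two points---the explicit treatment of $\partial D''\setminus\mathbb{R}^N_+$ and the observation that $w\in\mathbb{R}^N_+$---and you correctly identify $\operatorname{int}D(x'')$ as the domain for the vanishing degree (the paper writes $\mathcal{B}(0,R)$ there, an apparent slip).
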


Let us define
\begin{equation}
  D(x) := \{(z_1, \ldots, z_n) \in \mathbb{R}^{N} \suchthat (|z_1|, \ldots, |z_n|) \leqslant_K x\},
\end{equation}
for $x \in \mathbb{R}^{N}$.

Note that Assumption~\eqref{eq11} can be described in terms of a $D(x^{\second})$-norm 
\begin{equation}
  \|x\|_{D(x^{\second})} := \inf\{\alpha > 0 \suchthat \alpha x \in D(x^{\second})\},
\end{equation}
as
\begin{equation}\label{eq11bis}
  \|x\|_{D(x^{\second})} = 1 \Rightarrow \|f(x)\|_{D(x^{\second})} > 1.
\end{equation}

\begin{proof}
  We extend $f$ onto $\mathbb{R}^{N}$ as follows $\bar f(z_1, \ldots, z_n) := f(|z_1|, \ldots, |z_n|)$.
  We will also denote this extended map by $f$. Take a point $\bar x := k{w}$ with $k$ so big that $\bar x \not\in D(x^{\second})$.
  It is easy to see that $x - f(x) - k{w} \neq 0$, for every $x \in D(x^{\second})$.

  Indeed, if $x = f(x) + k{w}$, for some $x \in D(x^{\second})$, then $k{w} \leqslant_K x$ which implies that $k{w} \in D(x^{\second})$ but this is impossible. 

  Note that for every $x \in T$ one has $x - f(x) - tk{w} \neq 0$, for every $t \in [0, 1]$.
  Indeed, assuming that, for some $x \in T$, we have $x = f(x) + tk{w}$, we obtain that $f(x) \leqslant_K x$ which contradicts Condition~\eqref{eq11}.

  Now we use a homotopy $H_t := \mathbb{I}-f-tk{w}$ and obtain that $\operatorname{deg}\left(\mathbb{I} - f, \mathcal{B}(0,R), 0\right) = 0$.

  Consider the set $D(x^{\prime})$. We will show that it is invariant with respect to $f$.
  Obviously, it is enough to prove that $f(x) \in D(x^{\prime})$, for $0 \leqslant_K x \in D(x^{\prime})$.
  To do this, let us assume that $0 \leqslant_K x \in \partial D(x^{\prime})$.
  Then $x \leqslant_K x^{\prime}$ (because $x - x^{\prime} \in -K$ implies that $x^{\prime} - x \in K$), and from the $K$-monotonicity we obtain $f(x) \leqslant_K f(x^{\prime})$.
  This implies that $f(x) \in f(x^{\prime}) - K$. Hence, $f(x) \in D(x^{\prime})$.

  Now, if we assume that there is no fixed point on the boundary of $D(x^{\prime})$ (since otherwise the proof would be finished), in a standard way we show that
  \begin{equation}
    \operatorname{deg}\left(\mathbb{I} - f, \operatorname{int} D(x^{\prime}), 0\right) = 1.
  \end{equation}

  The additivity property of the degree implies that $\operatorname{deg}\left(\mathbb{I} - f, \operatorname{int} D(x^{\second}) \setminus D(x^{\prime}), 0\right) = -1$.
  Hence, there is a fixed point of $f$ in $\operatorname{int} D(x^{\second}) \setminus D(x^{\prime})$. This completes the proof.
\end{proof}

\begin{remark}
  Notice that a boundedness of the set $\mathcal{S}_{f}$ in Theorem~\ref{thm5} implies the existence of the point $x^{\second}$ mentioned in Theorem~\ref{thm6}.
\end{remark}

This generalization of Theorem~\ref{thm5} present in Theorem~\ref{thm6} has nice consequences in neural networks as we can see below.

\begin{example}\label{ex:example3}
  For simplicity let us consider a one dimensional case. Define a linear map $W(x) := 10x$ and an sigmoid activation function $\sigma(x) = \frac{1}{1 + \exp(-x)}$.
  Put $f(x) := \sigma(W(x) - 4)$. Plot of this mapping can be seen in Figure~\ref{fig.ex1}.
  \begin{figure}
    \begin{center}
      \includegraphics[width=0.9\columnwidth]{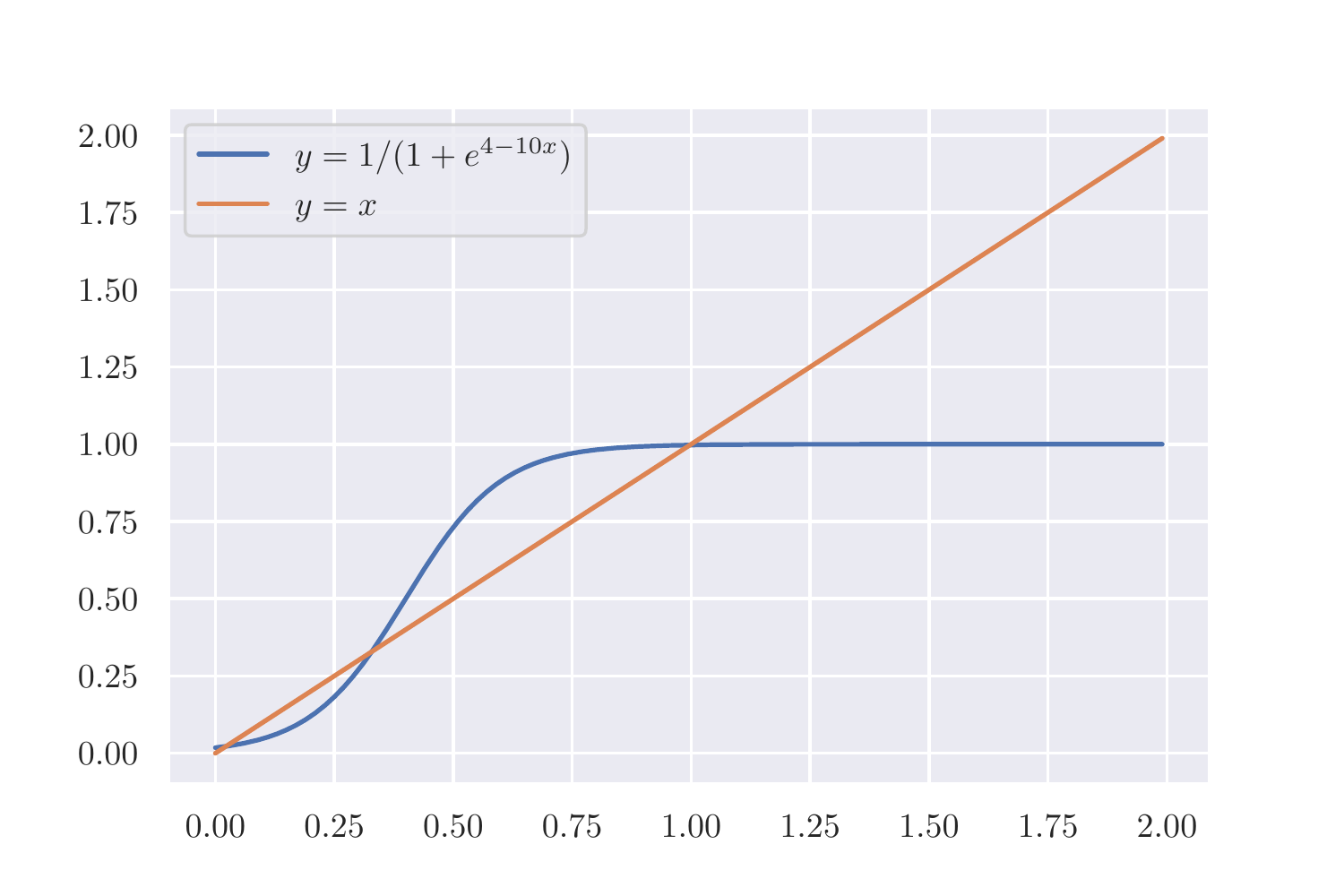}
      \caption{Plot of the function $f$ in Example~\ref{ex:example3}.}
      \label{fig.ex1}
    \end{center}
  \end{figure}

  It is easy to check that $f(0.6) \approx 0.88 > 0.6 =: x^{\second}$ and $f(0.3) \approx 0.27 < 0.3 =: x^{\prime}$.
  Therefore, there exists a fixed point of $f$ in interval $(x^{\prime}, x^{\second})$.

  Obviously, in the above situation Darboux theorem is enough, but in higher dimension we need to use the topological degree technique.

  Notice that mapping $f$ is not scalable and the set $\mathcal{S}_{f}$ is not bounded. Moreover, $\|W\| > 1$.

  In similar fashion we can construct an example in $\mathbb{R}^{N}$ with $K := \mathbb{R}^N_{+}$.
  For instance, we can take some $W$ with positive entries, some negative bias $b \in \mathbb{R}^{N}$ and a sigmoidal activation function $\sigma \colon \mathbb{R} \to \mathbb{R}$.
  Then we can define neural network layer $f(x) := \bar\sigma(W(x) + b)$, where $\bar\sigma (x) := (\sigma(x_1), \ldots, \sigma(x_n))$.
\end{example}

%Morover, we have
%\begin{theorem}
%  \label{thm7}
%  Let $f \colon \mathbb{R}^{N}_{+} \to \mathbb{R}^{N}_{+}$ be continuous and $K$-sup-monotone.
%  Assume that there is $x^{\prime} \gg_K 0$ in the set $\mathcal{S}_{f} = \{x \geqslant_K 0 \suchthat f(x) \leqslant_K x\}$, and $x^{\second} \gg_K x^{\prime}$ such that
%  \begin{equation}\label{eq42}
%    x^{\second} \not\in f(x) + K,
%  \end{equation}
%  for any $x \in T := \{z \geqslant_K 0 \suchthat x^{\second} \in \partial (z + K)\}$.
%  Then there exists a fixed point of $f$ in the set $\{z \geqslant_K 0 \suchthat z \leqslant_K x^{\second} \mbox{ and } z \not\ll_K x^{\prime}\}$.
%\end{theorem}

%TODO DOKONCZYC I ZROZUMIEC

\begin{theorem}\cite[Proposition 7]{persson2006fixed}
  \label{thm8}
  Let $f \colon \mathbb{R}^{N} \to \mathbb{R}^{N}$ be continuous and $K$-monotone, the set $\mathcal{S}_{f} = \{x \geqslant_K 0 \suchthat f(x) \leqslant_K x\}$ is bounded and there exists $x^{\prime}, x^{\second} \in \mathcal{S}_{f}$, $x^{\second} \ll_K x^{\prime}$.
  Then there exist at least two fixed points $\overline{x}, \widetilde{x}$ of $f$ such that $\overline{x} < \widetilde{x}$.
\end{theorem}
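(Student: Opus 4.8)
The plan is to recycle the global degree computation that already drives the proofs of Theorem~\ref{thm5} and Theorem~\ref{thm6}, localise it on the invariant box attached to the larger feasible point, and then recover the \emph{order} between the two fixed points by a separate monotone-iteration argument. First I would extend $f$ symmetrically to $\mathbb{R}^{N}$ by $\bar f(z_1,\ldots,z_n):=f(|z_1|,\ldots,|z_n|)$ and fix $R>0$ with $\mathcal{S}_{f}\subset\mathcal{B}(0,R)$. Taking $k{w}$ large enough to leave every feasible point, the homotopy $H_t:=\mathbb{I}-f-tk{w}$ has no zero on $\partial\mathcal{B}(0,R)$, since any solution of $x=f(x)+tk{w}$ satisfies $f(x)\leqslant_K x$ and hence lies in $\mathcal{S}_{f}\subset\mathcal{B}(0,R)$. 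Homotopy invariance then gives $\operatorname{deg}(\mathbb{I}-f,\mathcal{B}(0,R),0)=0$, and this step uses nothing beyond the boundedness of $\mathcal{S}_{f}$.

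Next I would localise on the larger point $x^{\prime}$. The box $D(x^{\prime})$ is closed, bounded, has $0$ in its interior, and is $f$-invariant: on $\partial D(x^{\prime})\cap\mathbb{R}^{N}_{+}$ one has $x\leqslant_K x^{\prime}$, so $K$-monotonicity together with $f(x^{\prime})\leqslant_K x^{\prime}$ yields $f(x)\leqslant_K f(x^{\prime})\leqslant_K x^{\prime}$. Assuming no fixed point sits on $\partial D(x^{\prime})$ (otherwise that boundary point already settles the existence part), the homotopy $\mathbb{I}-tf$ shows in the standard way that $\operatorname{deg}(\mathbb{I}-f,\operatorname{int}D(x^{\prime}),0)=1$. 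Additivity then forces
\begin{equation*}
  \operatorname{deg}\big(\mathbb{I}-f,\mathcal{B}(0,R)\setminus D(x^{\prime}),0\big)=0-1=-1,
\end{equation*}
so by the existence property there is a fixed point $\widetilde{x}\notin D(x^{\prime})$, and the symmetric extension (as in the earlier proofs) delivers $\widetilde{x}\geqslant_K 0$.

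It remains to manufacture a second fixed point that is strictly \emph{below} $\widetilde{x}$. Here I would abandon the degree and use monotone iteration. Since $f$ is invariant on $K$ we have $0\leqslant_K f(0)$, so $0$ is a lower solution; the sequence $f^{n}(0)$ is $K$-nondecreasing and, being bounded above by $x^{\prime}$, converges by the dual of Lemma~\ref{lem:convergenve} to a fixed point $\overline{x}$. For every nonnegative fixed point $p$ one has $f^{n}(0)\leqslant_K f^{n}(p)=p$, so $\overline{x}$ is the minimal nonnegative fixed point; in particular $\overline{x}\leqslant_K\widetilde{x}$. Comparing with the descent $f^{n}(x^{\second})\searrow_K q\leqslant_K x^{\second}$ of Lemma~\ref{lemma:feasbalecontraction} and using $0\leqslant_K x^{\second}$ gives $\overline{x}\leqslant_K q\leqslant_K x^{\second}\ll_K x^{\prime}$, whence $\overline{x}\ll_K x^{\prime}$. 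Thus $\overline{x}\in\operatorname{int}D(x^{\prime})$ while $\widetilde{x}\notin D(x^{\prime})$, so $\overline{x}\neq\widetilde{x}$ and therefore $\overline{x}<_K\widetilde{x}$.

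The main obstacle is precisely this comparability step. The degree count produces the two fixed points in the separated regions $\operatorname{int}D(x^{\prime})$ and $\mathcal{B}(0,R)\setminus D(x^{\prime})$ but tells us nothing about the order relation between them; pure index theory cannot distinguish a single fixed point counted with different signs from two genuinely ordered ones. It is exactly the strictly interior inequality $x^{\second}\ll_K x^{\prime}$ (together with $0\leqslant_K f(0)$, which supplies a \emph{minimal} fixed point) that lets me push $\overline{x}$ strictly inside $D(x^{\prime})$ and so convert ``distinct'' into ``$<_K$''. I would still have to dispose of the degenerate case in which a fixed point lands on $\partial D(x^{\prime})$, but that case hands us a fixed point directly and only shortens the argument; the remaining verifications (invariance on the non-positive part of $\partial D(x^{\prime})$ via the symmetric extension, and the absence of zeros on $\partial\mathcal{B}(0,R)$) are routine and mirror the proofs of Theorem~\ref{thm5} and Theorem~\ref{thm6}.
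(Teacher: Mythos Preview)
Your argument is correct, but the route you take diverges from the paper's in an instructive way. You run the full degree computation from scratch (global degree $0$, local degree $1$ on $D(x^{\prime})$, additivity) to produce a fixed point $\widetilde{x}$ outside $D(x^{\prime})$, and then build a \emph{minimal} fixed point $\overline{x}$ by iterating upward from $0$; the order relation $\overline{x}<_K\widetilde{x}$ then comes from minimality together with the strict localisation $\overline{x}\leqslant_K x^{\second}\ll_K x^{\prime}$. The paper instead iterates \emph{downward} from $x^{\second}$ (via Lemma~\ref{lemma:feasbalecontraction}) to obtain the first fixed point $\overline{x}$, and then performs a change of origin: setting $g(z):=f(z+\overline{x})-\overline{x}$ it checks that $g$ is again $K$-monotone, that $\mathcal{S}_g$ is bounded, and that $z^{\prime}:=x^{\prime}-\overline{x}\gg_K 0$ is strongly feasible for $g$; one invocation of the already-proved Theorem~\ref{theorem:degreerzero} then yields a fixed point $\widetilde{z}>_K 0$ of $g$, i.e.\ $\widetilde{x}:=\overline{x}+\widetilde{z}>_K\overline{x}$.

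The trade-off is clear. The translation trick in the paper gets the comparability $\overline{x}<_K\widetilde{x}$ for free (it is encoded in $\widetilde{z}>_K 0$), at the price of having to verify that the shifted map $g$ again satisfies all hypotheses of the earlier theorem. Your approach avoids the translation but pays for comparability with a separate minimal-fixed-point argument, which in turn needs the extra input $f(0)\in K$; this is implicit in the paper's standing framework (positivity, Remark~7) but is not written in the hypotheses of Theorem~\ref{thm8}, so if you keep your route you should flag that assumption explicitly.
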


\begin{proof}
  Similarly to the above (lemmata \ref{lem:boundedintersection}--\ref{lemma:feasbalecontraction}), due to the boundedness of the set $\mathcal{S}_{f}$, we obtain for $x^{\second}$ the non-increasing sequence $x^n := f^n(x^{\second})$ converging to a point $\overline{x} \in \mathcal{S}_{f}$.

  Let $z := x - \overline{x}$ and $z^{\prime} := x^{\prime} - \overline{x} \gg_K 0$.
  Define $g(z) := g(x - \overline{x}) = f(x) - \overline{x}$. Then if $z \leqslant_K z_1$, we get $g(z) \leqslant_K g(z_1)$.

  Morover, $g(z^{\prime}) = f(x^{\prime}) - \overline{x} \leqslant_K x^{\prime} - \overline{x} = z^{\prime} \in \mathcal{S}_{g} := \{v \suchthat g(v) \leqslant_K v\}$.
  Therefore, there exists a fixed point $\widetilde{z} = g(\widetilde{z}) > 0$, but then $\widetilde{z} = \widetilde{x} - \overline{x}$ and so $\widetilde{x} = f(\widetilde{x})$ and $\widetilde{x} > \overline{x}$.
\end{proof}

\begin{theorem}
  \label{thm9}
  Let $f \colon \mathbb{R}^{N}_+ \to \mathbb{R}^{N}_+$ be continuous and $K$-sup-monotone, the set $\mathcal{S}_{f} = \{x \geqslant_K 0 \suchthat f(x) \leqslant_K x\}$ is bounded and there exists $x^{\prime}, x^{\second} \in \mathcal{S}_{f}$, $0 \ll_K x^{\second} \ll_K x^{\prime}$.
  Then there exist at least two fixed points $0 \leqslant_K \overline{x}$, $0 \leqslant_K \widetilde{x}$ of $f$.
\end{theorem}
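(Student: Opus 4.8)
The plan is to run the topological-degree scheme of Theorem~\ref{theorem:degreerzero} and Theorem~\ref{thm8}, but to exploit the two nested feasible points $0 \ll_K x^{\second} \ll_K x^{\prime}$ so as to trap two fixed points in disjoint regions. First I would extend $f$ to all of $\mathbb{R}^{N}$ by the symmetric formula $\bar f(z_1,\ldots,z_n) := f(|z_1|,\ldots,|z_n|)$, exactly as in the proof of Theorem~\ref{theorem:degreerzero}, and keep writing $f$ for the extension. Because $f$ takes values in $\mathbb{R}^{N}_{+} \subseteq K$, every zero of $\mathbb{I}-f$ lies in $\mathbb{R}^{N}_{+}$; hence each fixed point of the extension is automatically a fixed point of the original map with $0 \leqslant_K x$, and on such a point $\bar f$ and $f$ agree. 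This is what lets me work on symmetric neighbourhoods of the origin and still land in the positive cone.

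Second, I would record two degree inputs. On one hand, the boundedness of $\mathcal{S}_{f}$ yields, verbatim as in Steps~2--4 of the proof of Theorem~\ref{theorem:degreerzero}, a radius $R$ with $\mathcal{S}_{f} \subset \mathcal{B}(0,R)$ and $\operatorname{deg}(\mathbb{I}-f, D_R, 0)=0$, where $D_R := \mathcal{D}(0,R)$. On the other hand, I would introduce the two boxes
\[
  D^{\second}_{+} := \{\, 0 \leqslant_K z \leqslant_K x^{\second} \,\}, \qquad
  D^{\prime}_{+} := \{\, 0 \leqslant_K z \leqslant_K x^{\prime} \,\},
\]
together with their symmetrizations $D^{\second}$ and $D^{\prime}$, noting that $D^{\second} \subset D^{\prime}$ since $x^{\second} \ll_K x^{\prime}$ and that $0 \in \operatorname{int} D^{\second}$.

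Third, I would show both boxes are invariant under $f$, which is the step where $K$-sup-monotonicity must replace $K$-monotonicity. For $0 \leqslant_K x \in \partial D^{\prime}_{+}$ one has $x \leqslant_K x^{\prime}$, and sup-monotonicity together with $f(x^{\prime}) \leqslant_K x^{\prime}$ gives $f(x) \leqslant_K \sup\{x^{\prime}, f(x^{\prime})\} = x^{\prime}$, so $f(x) \in D^{\prime}_{+}$; the identical computation with $x^{\second}$ yields invariance of $D^{\second}$, precisely as in Step~5 of Theorem~\ref{theorem:degreerzero}. With invariance in hand, the homotopy $H_t := \mathbb{I} - t f$ has no zero on $\partial D^{\prime}$ for $t \in [0,1]$: a relation $x = t f(x)$ with $t \in (0,1]$ forces $x \in \mathbb{R}^{N}_{+}$ and $x \leqslant_K x^{\prime}$, whence $x = t f(x) \leqslant_K t x^{\prime} \ll_K x^{\prime}$ by strictness $x^{\prime} \gg_K 0$, contradicting $x \in \partial D^{\prime}$. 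Therefore $\operatorname{deg}(\mathbb{I}-f, \operatorname{int} D^{\prime}, 0) = \operatorname{deg}(\mathbb{I}, \operatorname{int} D^{\prime}, 0) = 1$, and likewise $\operatorname{deg}(\mathbb{I}-f, \operatorname{int} D^{\second}, 0) = 1$.

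Finally, the additivity property of the degree assembles the two fixed points. From $\operatorname{deg}(\mathbb{I}-f, \operatorname{int} D^{\second}, 0) = 1$ I obtain a fixed point $\overline{x} \in \operatorname{int} D^{\second}$ with $\overline{x} \geqslant_K 0$, and from $\operatorname{deg}(\mathbb{I}-f, D_R, 0) = 0$ combined with $\operatorname{deg}(\mathbb{I}-f, \operatorname{int} D^{\prime}, 0) = 1$ additivity gives $\operatorname{deg}(\mathbb{I}-f, D_R \setminus D^{\prime}, 0) = -1$, producing a second fixed point $\widetilde{x} \in D_R \setminus D^{\prime}$ with $\widetilde{x} \geqslant_K 0$. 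Since $\overline{x} \in D^{\second} \subset D^{\prime}$ while $\widetilde{x} \notin D^{\prime}$, the two points are distinct, which proves the claim. I expect the only genuine obstacle to be the boundary analysis in the third step: with mere sup-monotonicity one cannot compare $f(x)$ and $f(x^{\prime})$ directly, so the non-vanishing of $\mathbb{I}-tf$ on $\partial D^{\prime}$ and $\partial D^{\second}$ has to be squeezed out of the invariance inequality $f(x) \leqslant_K x^{\prime}$ and the strict positivity $x^{\prime}, x^{\second} \gg_K 0$, rather than from the monotone iteration used in the classical Persson argument of Theorem~\ref{thm8}.
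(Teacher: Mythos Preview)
Your proposal is correct and follows essentially the same route as the paper. The paper's proof is only two sentences: it asserts $\operatorname{deg}(\mathbb{I}-f, D(x^{\second}), 0)=1$ (giving $\overline{x}\in D(x^{\second})$) and invokes the argument of Theorem~\ref{theorem:degreerzero} with $x^{\prime}$ to obtain $\widetilde{x}\in \mathcal{B}(0,R)\setminus D(x^{\prime})$; you have simply unpacked these two references in full, reproducing the symmetric extension, the homotopy $H_t=\mathbb{I}-tf$, the invariance via sup-monotonicity and feasibility, and the additivity step. One small point: in your boundary analysis the inequality $x=tf(x)\leqslant_K t x^{\prime}\ll_K x^{\prime}$ only yields a contradiction for $t\in(0,1)$; for $t=1$ you must, as in Step~6 of Theorem~\ref{theorem:degreerzero}, split off the case of a fixed point on $\partial D^{\prime}$ (respectively $\partial D^{\second}$) directly, and then use $D^{\second}\subset \operatorname{int} D^{\prime}$ (from $x^{\second}\ll_K x^{\prime}$) to keep the two fixed points distinct.
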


\begin{proof}
  We know that $\operatorname{deg}(\mathbb{I} - f, D(x^{\second}), 0) = 1$, so there exists a fixed point in $D(x^{\second})$.
  We know also that there exits a fixed point in $\mathcal{B}(0, R) \setminus D(x^{\prime})$.
\end{proof}

\section{Existence of fixed points via guiding function approach}

%TODO DOKONCZYC I ZROZUMIEC

In this section, we assume that $K := \mathbb{R}^{N}_{+}$. Let us introduce new type of assumption:
\begin{LaTeXdescription}
  \item[(G)]\label{lab:Z4} $\forall_{x, x^{\prime} \geqslant_K 0}\left(\left\|x^{\prime}\right\|_{1} = \|x\|_{1} \Rightarrow\left\langle f(x) - x, f\left(x^{\prime}\right)-x^{\prime}\right\rangle \geq 0\right)$,
    where $\|y\|_{1} := \sum_{i = 1}^{N}\left|y_{i}\right|$, for $y \in \mathbb{R}^{N}$.
\end{LaTeXdescription}
This assumption is similar to the guiding function assumption. %It takes into account angle.
\begin{remark}
  In Figure~\ref{fig.assumptionz4} we illustrate the Assumption~\ref{lab:Z4}.

  \begin{figure}
    \begin{center}
      \includegraphics[width=0.9\columnwidth]{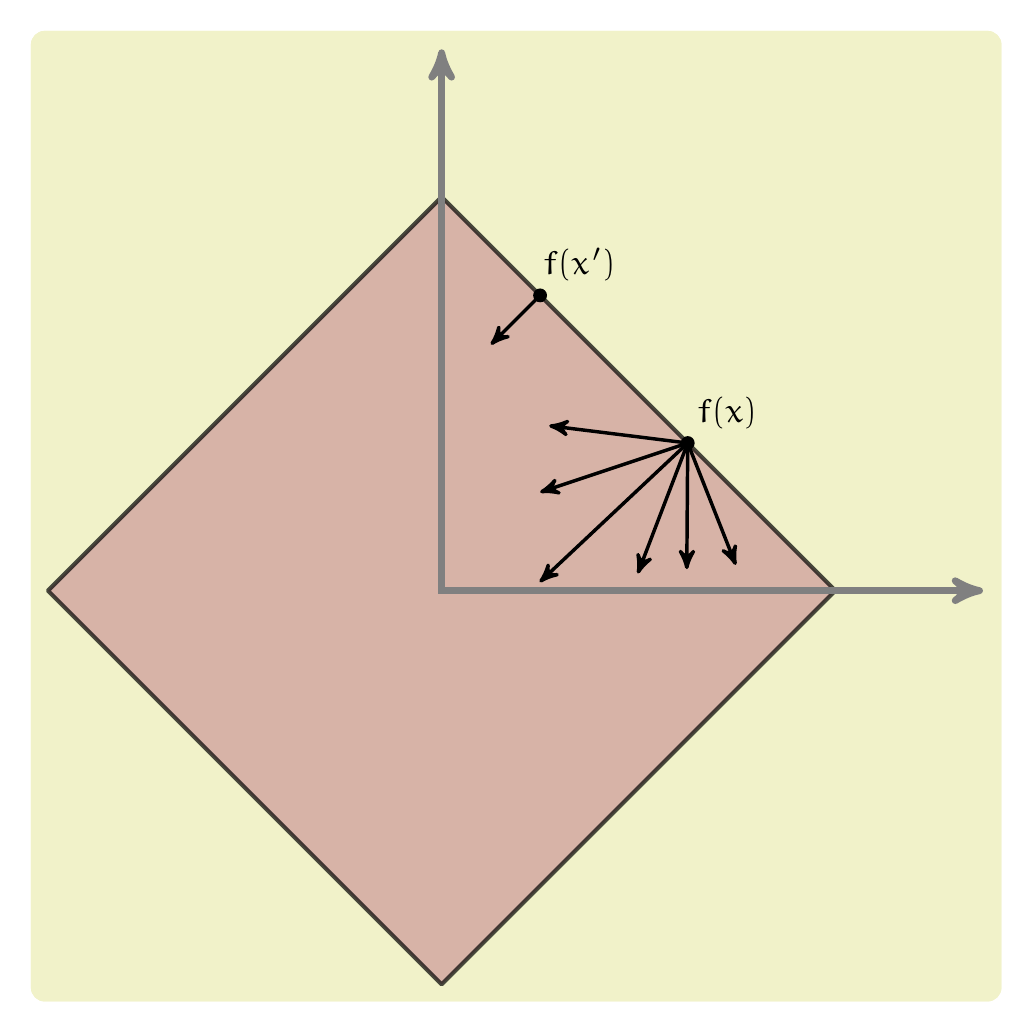}
      \caption{Illustration to Assumption~\ref{lab:Z4}.}
      \label{fig.assumptionz4}
    \end{center}
  \end{figure}

\end{remark}
It is possible to show the existence of a positive fixed point assuming the boundedness of the set $\mathcal{S}_{f}$ and the above assumption:
\begin{theorem}
  Let us assume \ref{lab:S1}, \ref{lab:Z4} and that there exists $x^{\prime} > 0$ such that $x^{\prime}-f\left(x^{\prime}\right) = \lambda {\bf 1}$, for some $\lambda \geq 0$.
  Then there exists a fixed point of $f$.
\end{theorem}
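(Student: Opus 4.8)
The plan is to produce the fixed point by a topological degree computation on the solid simplex cut out at the $\ell^1$-level of the distinguished point $x^{\prime}$, using Assumption~\ref{lab:Z4} to control the displacement field $\mathbb{I}-f$ on its boundary. Write $c := \|x^{\prime}\|_1$, which is positive since $x^{\prime} >_K 0$, and set $P := \{x \geqslant_K 0 \suchthat \|x\|_1 \leq c\}$, a bounded set with interior $\operatorname{int} P = \{x \gg_K 0 \suchthat \|x\|_1 < c\}$ whose boundary splits into the simplex face $\Delta := \{x \geqslant_K 0 \suchthat \|x\|_1 = c\}$ and the coordinate faces $\{x \in P \suchthat x_i = 0\}$. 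First I would dispose of the trivial case: if $\lambda = 0$ then $f(x^{\prime}) = x^{\prime}$ and we are done, so assume $\lambda > 0$.

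The key reduction is to turn Assumption~\ref{lab:Z4} into a scalar sign condition on $\Delta$. For every $x \in \Delta$ we have $\|x\|_1 = \|x^{\prime}\|_1$, so \ref{lab:Z4} gives $\langle f(x) - x, f(x^{\prime}) - x^{\prime}\rangle \geq 0$; since $f(x^{\prime}) - x^{\prime} = -\lambda {\bf 1}$ with $\lambda > 0$, this is equivalent to $\langle f(x) - x, {\bf 1}\rangle \leq 0$, i.e. $\langle f(x), {\bf 1}\rangle \leq \|x\|_1 = c$ for all $x \in \Delta$. In other words, on the simplex face the map $f$ never increases the $\ell^1$-level.

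Next I would compute $\operatorname{deg}(\mathbb{I} - f, \operatorname{int} P, 0)$. Assuming there is no fixed point on $\partial P$ (otherwise the proof is already finished), fix the interior point $q := \frac{c}{2N}{\bf 1}$, so that $q \gg_K 0$ and $\|q\|_1 = c/2 < c$, and consider the homotopy $H_t(x) := x - (1-t) f(x) - t q$. I claim $H_t$ has no zero on $\partial P$ for any $t \in [0,1]$. On $\Delta$ a zero would give $x = (1-t)f(x) + tq$; pairing with ${\bf 1}$ and using $\langle f(x), {\bf 1}\rangle \leq c$ together with $\|q\|_1 < c$ yields $c = \langle x, {\bf 1}\rangle \leq (1-t)c + t\|q\|_1 < c$ for $t > 0$, a contradiction, while $t = 0$ is excluded by the no-boundary-fixed-point assumption. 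On a coordinate face $\{x_i = 0\}$ the $i$-th component reads $0 = (1-t)f(x)_i + t q_i \geq t q_i > 0$ for $t > 0$ (here I invoke positivity $f(x) \geqslant_K 0$ and $q_i > 0$), again impossible, and $t = 0$ is excluded as before. Homotopy invariance then gives $\operatorname{deg}(\mathbb{I}-f, \operatorname{int} P, 0) = \operatorname{deg}(\mathbb{I} - q, \operatorname{int} P, 0) = 1$ by the normalization property, and the Existence property produces a fixed point $x^{*} \in \operatorname{int} P$ with $x^{*} \gg_K 0$.

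The main obstacle is the boundary analysis: the reduction of \ref{lab:Z4} to $\langle f(x) - x, {\bf 1}\rangle \leq 0$ disposes of the simplex face cleanly, but the coordinate faces force me to use positivity of $f$ (that $f$ maps $\mathbb{R}^N_+$ into itself), which is the standing hypothesis on interference-type mappings and must be made explicit here. Finally, Assumption~\ref{lab:S1} is not strictly needed to produce this particular fixed point inside $P$; it enters exactly as in the proof of Theorem~\ref{thm5}, where the symmetric extension of $f$ and the homotopy $\mathbb{I} - f - t k {\bf 1}$ give $\operatorname{deg}(\mathbb{I}-f, \mathcal{B}(0,R), 0) = 0$ on a large ball, so that additivity of the degree locates a further fixed point in $\mathcal{B}(0,R) \setminus P$.
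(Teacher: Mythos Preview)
Your argument is correct and follows the same idea as the paper's sketch: use \ref{lab:Z4} together with $f(x')-x'=-\lambda\mathbf{1}$ (the inward normal to the $\ell^1$-sphere) to show the displacement field points inward on the level set $\{\|x\|_1=\|x'\|_1\}$, and then conclude via invariance/degree. Your version is more explicit---you work on the solid simplex in $\mathbb{R}^N_+$, handle the coordinate faces separately, and run a concrete homotopy to the constant map---whereas the paper only sketches the tangent-cone step $f(x)-x\in T_{D}(x)$; your observation that \ref{lab:S1} is not actually needed for the bare existence claim is also correct.
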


\noindent {\em Sketch of the proof.}
Indeed, on the boundary of the ball $\mathcal{B}\left(0, \left\|x^{\prime}\right\|_{{\bf 1}}\right)$ there exists point $x^{\prime}$, and $f\left(x^{\prime}\right) - x^{\prime}$ is an internal vector normal to this boundary, so for every other point from the boundary we have $f(x) - x \in T_{D_R}(x)$, and so $f(x) \in D_{1}$.\hfill$\square$

%\begin{theorem}
%  Let us assume \ref{lab:S1}, $f$ is $K$-monotone and that there exists $x^{\prime} > 0$ in $\mathcal{S}_{f}$. Then there exists a fixed point of $f$.
%\end{theorem}
%
%\noindent {\em Sketch of the proof.}
%%Indeed, in the proof of Theorem 1 from \cite{persson2006fixed} it is needed to show that there is no fixed point on the boundary of $D_{1}$ such that $x = t f(x)$, for $t \in [0, 1)$.
%It is needed to show that there is no fixed point on the boundary of $D_{1}$ such that $x = t f(x)$, for $t \in [0, 1)$.
%If it were (just check in $\mathbb{R}_{+}^{N}$), then we would have $x \leqslant_K f(x)$ and $x_{i} < f_{i}(x)$, for some $i \in \{1, \ldots, n\}$ such that $x_{i} = x_{i}^{\prime}$.
%Then, since $x \leqslant_K x^{\prime}$, from $K$-monotonicity of $f$ we have $x_{i}^{\prime} = x_{i} < f_{i}(x) \leq f_{i}\left(x^{\prime}\right)$; a contradiction.\hfill$\square$

\begin{theorem}
  Assume \ref{lab:S1}. Let us assume that there exists $x^{\prime} \gg_K 0$ in $\mathcal{S}_{f}$ and assume
  \begin{equation}\label{eq:Z2}
    \begin{split}
      \forall_{x \geq_K 0} & (f(x) \neq x \leqslant_K f(x) \Rightarrow \\
      & \forall_{x^{\prime} \geq_K 0} \left(x^{\prime} \geq_K x \Rightarrow \left\|f\left(x^{\prime}\right)\right\|_{v} \geq \|f(x)\|_{v}\right)),
    \end{split}
  \end{equation}
  for $v = x^{\prime}$. Then there exists a fixed point of $f$.
\end{theorem}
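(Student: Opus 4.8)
The plan is to run the topological-degree scheme of Theorem~\ref{theorem:degreerzero}, replacing its sup-monotonicity invariance step by the norm inequality~\eqref{eq:Z2}. First I would fix $v := x^{\prime}$ and record the one consequence of feasibility that drives everything: since $x^{\prime} \in \mathcal{S}_{f}$ we have $0 \leqslant_K f(x^{\prime}) \leqslant_K x^{\prime}$, hence $f(x^{\prime})_i \in [0, x^{\prime}_i]$ for every $i$ and therefore $\|f(x^{\prime})\|_{x^{\prime}} \leq 1$. I would then work on the box $D := D(x^{\prime}) = \{z \suchthat (|z_1|, \ldots, |z_n|) \leqslant_K x^{\prime}\}$, whose Minkowski gauge is exactly $\|\cdot\|_{x^{\prime}}$, so that $0 \in \operatorname{int} D$ and $\partial D = \{z \suchthat \|z\|_{x^{\prime}} = 1\}$; as in the earlier proofs, $f$ is first extended symmetrically by $\bar f(z) := f(|z_1|, \ldots, |z_n|)$.

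The decisive step is to show the homotopy $H_t := \mathbb{I} - t\bar f$, $t \in [0,1]$, is admissible on $\partial D$. Suppose $x = t\bar f(x)$ for some $x \in \partial D$ and $t \in (0,1)$ (the case $t = 0$ gives only $x = 0 \in \operatorname{int} D$, and at $t = 1$ a boundary solution is already a fixed point, so the theorem holds). Because $\bar f(x) \geqslant_K 0$ and $t > 0$, necessarily $x \geqslant_K 0$, whence $\bar f(x) = f(x)$ and $x \in \partial D_{+}$, so $x \leqslant_K x^{\prime}$. Moreover $f(x) - x = (1-t)f(x) \in K$ with $f(x) \neq 0$ (else $x = 0$), so $x \leqslant_K f(x)$ and $x \neq f(x)$: these are exactly the points to which \eqref{eq:Z2} applies. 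Instantiating the inner quantifier of \eqref{eq:Z2} at the dominating point $x^{\prime}$ (legitimate since $x^{\prime} \geqslant_K x$) yields $\|f(x^{\prime})\|_{x^{\prime}} \geq \|f(x)\|_{x^{\prime}}$, and combined with the feasibility bound this gives $\|f(x)\|_{x^{\prime}} \leq 1$. But then $\|x\|_{x^{\prime}} = t\,\|f(x)\|_{x^{\prime}} \leq t < 1$, contradicting $x \in \partial D$.

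With no zeros of $H_t$ on $\partial D$, homotopy invariance and the units property give $\operatorname{deg}(\mathbb{I} - f, \operatorname{int} D, 0) = \operatorname{deg}(\mathbb{I}, \operatorname{int} D, 0) = 1$, and the existence property then produces a fixed point of $f$ in $\operatorname{int} D$, proving the claim. Assumption~\ref{lab:S1} may alternatively be used as in Theorem~\ref{theorem:degreerzero}, Steps~2--4, to get $\operatorname{deg}(\mathbb{I}-f, D_R, 0) = 0$ and then, by additivity, a fixed point in $D_R \setminus D$; for bare existence the degree-one computation on $\operatorname{int} D$ already suffices. I expect the only real obstacle to be bookkeeping at the boundary: verifying that every boundary solution of $x = t\bar f(x)$ actually lands in the hypothesis region of \eqref{eq:Z2} --- that is, is nonnegative, strictly dominated by its image, and bounded by $x^{\prime}$ --- and instantiating that implication at the correct point $x^{\prime}$ so that its $\geq$ direction delivers the upper bound $\|f(x)\|_{x^{\prime}} \leq 1$ that we need.
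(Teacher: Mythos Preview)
Your proposal is correct and follows essentially the same route as the paper's sketch: both argue that a relation $x = t\bar f(x)$ with $t\in(0,1)$ on $\partial D(x')$ forces $x\leqslant_K f(x)$, $x\neq f(x)$, and then combine~\eqref{eq:Z2} at the dominating point $x'$ with the feasibility bound $\|f(x')\|_{x'}\le 1$ to reach a contradiction. The only cosmetic difference is the direction of the contradiction---the paper first observes $\|f(x)\|_{x'}=1/t>1$ and then uses~\eqref{eq:Z2} to contradict $\|f(x')\|_{x'}\le 1$, whereas you use feasibility first and contradict $\|x\|_{x'}=1$---but these are the same argument, and your remark that assumption~\ref{lab:S1} is not needed for the bare degree-one computation on $\operatorname{int} D$ is a valid observation.
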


\noindent {\em Sketch of the proof.}
Indeed, taking $x$ on the boundary of $D_{1}$ and $i$ such that $x_{i} = x_{i}^{\prime}$ we get (assuming $x = t f(x)$ with $t \in [0, 1)$) inequality $x_{i} < f\left(x_{i}\right)$, what gives $\|f(x)\|_{x^{\prime}} > 1$.
From Assumption~\eqref{eq:Z2} we get $\left\|f\left(x^{\prime}\right)\right\|_{x^{\prime}} \geq \|f(x)\|_{x^{\prime}} > 1$; a contradiction.\hfill$\square$

%Property \ref{eq:Z2} has the drawback of being secondary to the existence of $x^{\prime} \in \mathcal{S}_{f}$.

Note that with $x^{\prime}$ lying on the "main diagonal" we have the following result:
\begin{theorem}
  Assume \ref{lab:S1}. Suppose there is a number $\lambda > 0$ such that $f(\lambda {\bf 1}) \leqslant_K \lambda {\bf 1}$ and Condition~\eqref{eq:Z2} is satisfied for $v = {\bf 1}$, so
  \begin{align*}
    \forall_{x \geqslant_K 0}\{ f(x) \neq x & \leqslant_K f(x) \Rightarrow \\
    & \forall_{x^{\prime} \geqslant_K 0}(x^{\prime} \geqslant_K x \Rightarrow
    \|f(x^{\prime})\|_{\infty} \geq \|f(x)\|_{\infty}) \}.
  \end{align*}
  Then there exists a positive fixed point of $f$.
\end{theorem}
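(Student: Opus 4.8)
The plan is to obtain this statement as the diagonal specialization of the preceding theorem, taking the strongly feasible witness to be $x^{\prime} := \lambda {\bf 1}$. First I would verify that this choice meets the hypotheses of that theorem. Since $\lambda > 0$ and $K = \mathbb{R}^{N}_{+}$, every coordinate of $\lambda {\bf 1}$ is strictly positive, so $x^{\prime} \gg_K 0$; and the assumption $f(\lambda {\bf 1}) \leqslant_K \lambda {\bf 1}$ together with $\lambda {\bf 1} \geqslant_K 0$ is exactly the statement that $x^{\prime} \in \mathcal{S}_{f}$. Thus a strongly feasible $x^{\prime}$ is at hand.

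The only point requiring attention is the reconciliation of the two monotonicity hypotheses. For the weight $v = x^{\prime} = \lambda {\bf 1}$ the weighted maximum norm collapses to a multiple of the supremum norm,
\[
  \|y\|_{\lambda {\bf 1}} = \max_{i} \frac{|y_{i}|}{\lambda} = \frac{1}{\lambda}\|y\|_{\infty},
\]
so that the factor $1/\lambda$ cancels on both sides of any inequality between $v$-norms. Consequently the implication $\left\|f\left(x^{\prime}\right)\right\|_{v} \geq \|f(x)\|_{v}$ in Condition~\eqref{eq:Z2} for $v = x^{\prime}$ is equivalent to $\|f(x^{\prime})\|_{\infty} \geq \|f(x)\|_{\infty}$, which is precisely the displayed hypothesis (Condition~\eqref{eq:Z2} for $v = {\bf 1}$, recalling $\|\cdot\|_{{\bf 1}} = \|\cdot\|_{\infty}$). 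Hence the preceding theorem applies and delivers a fixed point.

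For the positivity claim I would re-run the degree computation localized to $D(\lambda {\bf 1})$. Extending $f$ symmetrically by $\bar f(z_{1}, \ldots, z_{n}) := f(|z_{1}|, \ldots, |z_{n}|)$ keeps the range inside $\mathbb{R}^{N}_{+}$, so any solution of $z = t\bar f(z)$ with $t \in [0, 1]$ is automatically non-negative; on $\partial D(\lambda {\bf 1})$ such a point has a coordinate $x_{i} = \lambda$, and $x = t\bar f(x)$ with $t < 1$ forces $f_{i}(x) = x_{i}/t > \lambda$, hence $\|f(x)\|_{\infty} > \lambda$. Since $x \leqslant_K f(x)$ and $f(x) \neq x$, Condition~\eqref{eq:Z2} then gives $\|f(x^{\prime})\|_{\infty} \geq \|f(x)\|_{\infty} > \lambda$, contradicting $f(\lambda {\bf 1}) \leqslant_K \lambda {\bf 1}$. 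Thus the homotopy $H_{t} := \mathbb{I} - t\bar f$ is admissible on $\partial D(\lambda {\bf 1})$ and $\operatorname{deg}(\mathbb{I} - f, \operatorname{int} D(\lambda {\bf 1}), 0) = \operatorname{deg}(\mathbb{I}, \operatorname{int} D(\lambda {\bf 1}), 0) = 1$, producing a non-negative fixed point in the interior of the cube $\{|z_{i}| < \lambda\}$.

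The main obstacle I expect is sharpening ``non-negative'' to ``positive'': the degree argument only places $x^{*}$ in $\mathbb{R}^{N}_{+} \cap \operatorname{int} D(\lambda {\bf 1})$, which a priori could be the origin if $f(0) = 0$. Ruling this out (or reading ``positive'' in the weak sense $x^{*} \geqslant_K 0$) is where the diagonal structure and the standing boundedness assumption \ref{lab:S1} must be used carefully, the latter framing the outer-ball construction so that the located fixed point is genuinely interior rather than an artifact of the symmetric extension on the coordinate hyperplanes.
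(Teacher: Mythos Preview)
Your proposal is correct and matches the paper's treatment exactly: the paper introduces this theorem with the sentence ``Note that with $x^{\prime}$ lying on the `main diagonal' we have the following result'' and gives no separate proof, so it is precisely the specialization $x^{\prime} = \lambda {\bf 1}$ of the preceding theorem, with the norm identity $\|\cdot\|_{\lambda {\bf 1}} = \lambda^{-1}\|\cdot\|_{\infty}$ collapsing Condition~\eqref{eq:Z2} for $v = x^{\prime}$ to the displayed hypothesis. Your additional paragraphs (the explicit degree computation on $D(\lambda {\bf 1})$ and the discussion of ``positive'' versus ``non-negative'') go beyond what the paper supplies, but they are consistent with the degree argument used in the surrounding results and do not diverge from the paper's method.
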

\begin{remark}
  Assumption $f(\lambda {\bf 1}) \leqslant_K \lambda {\bf 1}$ means that at the vertex of some square at which mapping $f$ value is directed to its interior.
\end{remark}

%Under assumptions as in Theorems 9--12, it is possible to show the existence of two different positive fixed points if there are two points $x^{\prime \prime} < x^{\prime}$ satisfying above-mentioned assumptions.
%Then the proof goes not like in \cite{persson2006fixed}, but using the topological degree theory.

Assumption~\ref{lab:Z4} can be generalized to the following form:
\begin{LaTeXdescription}
  \item[(G2)]\label{lab:Z5} there exists $\gamma \in [-\pi/2, \pi/2]$ such that for $x, x^{\prime} \in \mathbb{R}^{N}_{+}$ with $\|x\|_1 = \|x^{\prime}\|_1$ we have
    \begin{equation*}
      \langle f(x) - x, f(x^{\prime}) - x^{\prime} \rangle \geq \cos(\gamma) \|f(x) - x \|_2 \cdot \| f(x^{\prime}) - x^{\prime} \|_2.
    \end{equation*}
\end{LaTeXdescription}

The above facts enable us to show 
\begin{theorem}
  Assume \ref{lab:S1}, \ref{lab:Z5}.
  Then there exists a positive fixed point of $f$.
\end{theorem}

\begin{theorem}
  Assume \ref{lab:S1}, \ref{lab:Z5} and suppose there exists a number $\lambda > 0$ such that $f(\lambda {\bf 1}) \leqslant_K \lambda {\bf 1}$, then
  \begin{equation}
    \langle f(x) - x, {\bf 1} \rangle \leq \left(\frac{\pi}{2} - \gamma \right) \|f(x) - x \|_2,
  \end{equation}
  where $\|\cdot\|_2$ denotes the 2-norm in $\mathbb{R}^{N}$.
  Then there exists a positive fixed point of $f$.
\end{theorem}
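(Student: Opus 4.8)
The plan is to read the displayed inequality as a quantitative form of the guiding-function condition and then to feed it into the same topological-degree scheme used for Theorem~\ref{thm5}. Throughout I write $K = \mathbb{R}^N_+$, set $c := \|\lambda{\bf 1}\|_1 = \lambda N$, and restrict attention to the $\ell^1$-sphere $\Sigma := \{x \geqslant_K 0 \suchthat \|x\|_1 = c\}$, which contains the anchor point $\lambda{\bf 1}$. The point of the hypothesis $f(\lambda{\bf 1}) \leqslant_K \lambda{\bf 1}$ is to provide one vector of the field $x \mapsto f(x)-x$ whose direction is controlled.

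First I would exploit this hypothesis directly. Setting $u := f(\lambda{\bf 1}) - \lambda{\bf 1}$, the assumption says $u \in -K$, so every coordinate of $u$ is nonpositive and hence $\langle u, {\bf 1}\rangle \le 0$; geometrically, the angle $\angle(u, {\bf 1})$ between $u$ and the outward normal ${\bf 1}$ of $\Sigma$ is at least $\pi/2$.

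Second --- and this is the step I expect to be the main obstacle --- I would transport this inward orientation to every point of $\Sigma$ by means of Assumption~\ref{lab:Z5}. Applying \ref{lab:Z5} with the second argument fixed at $\lambda{\bf 1}$ gives, for $x \in \Sigma$ with $f(x) \neq x$,
\begin{equation}
  \langle f(x) - x, u\rangle \ge \cos(\gamma)\,\|f(x) - x\|_2\,\|u\|_2,
\end{equation}
i.e. the angle between $f(x) - x$ and $u$ is at most $\gamma$. Combining this with $\angle(u, {\bf 1}) \ge \pi/2$ through the triangle inequality for angles yields $\angle(f(x) - x, {\bf 1}) \ge \pi/2 - \gamma$, whence the asserted bound on $\langle f(x) - x, {\bf 1}\rangle$ in terms of $\|f(x) - x\|_2$. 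The delicate points are that the angular triangle inequality is valid only while the intermediate angles stay in $[0,\pi]$, that the degenerate case $f(x) = x$ (already a fixed point, so harmless) must be set aside, and that the constant $\|{\bf 1}\|_2 = \sqrt{N}$ together with the conversion $\cos(\tfrac{\pi}{2}-\gamma) = \sin\gamma$ must be reconciled with the stated coefficient $\tfrac{\pi}{2}-\gamma$; getting the inequality into exactly the displayed form, rather than the naturally appearing $\sqrt{N}\sin\gamma$, is the one genuinely fiddly issue.

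Finally I would turn the estimate into a homotopy-admissibility condition and invoke topological degree as in Theorem~\ref{thm5}. Extend $f$ symmetrically by $\bar f(z_1,\ldots,z_n) := f(|z_1|,\ldots,|z_n|)$ and work on the cross-polytope $D := \{z \in \mathbb{R}^N \suchthat \|(|z_1|,\ldots,|z_n|)\|_1 \le c\}$, which has $0$ in its interior and boundary $\partial D = \{\|z\|_1 = c\}$. The estimate on $\Sigma$ keeps $f(x) - x$ out of the strictly outward direction along the outer face, so that, exactly as in the sketches above, $z = t\bar f(z)$ has no solution on $\partial D$ for $t \in [0,1)$, while any solution at $t = 1$ is already a fixed point. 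Hence $H_t := \mathbb{I} - t\bar f$ is admissible on $\partial D$, and by homotopy invariance with the Units property $\operatorname{deg}(\mathbb{I} - f, \operatorname{int} D, 0) = \operatorname{deg}(\mathbb{I}, \operatorname{int} D, 0) = 1$. The Existence property then furnishes $x^* \in \operatorname{int} D$ with $f(x^*) = x^*$; positivity of $f$ forces $x^* = f(x^*) \geqslant_K 0$, so $x^*$ is the desired positive fixed point, and boundedness of $\mathcal{S}_f$ from \ref{lab:S1} guarantees $c$ can be chosen large enough for every degree above to be well defined.
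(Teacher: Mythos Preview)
The paper itself supplies no proof for this theorem; it only gives one-line sketches for the earlier (G)-based statements and then records the (G2) versions, including this one, without argument. Your overall architecture---anchor at $\lambda\mathbf{1}$, propagate directional information along the $\ell^1$-sphere $\Sigma$ via~\ref{lab:Z5}, and run a degree computation on the symmetrised cross-polytope $D$---is exactly the scheme those sketches indicate, so in spirit your plan matches the paper's methodology. Your derivation of the angle bound via the spherical triangle inequality is also correct, and you are right to flag that it naturally delivers the coefficient $\sqrt{N}\sin\gamma$ rather than the printed $\tfrac{\pi}{2}-\gamma$.

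There is, however, a genuine gap in the final homotopy step. You claim that the estimate ``keeps $f(x)-x$ out of the strictly outward direction along the outer face'' and that therefore $z = t\bar f(z)$ has no solution on $\partial D$ for $t\in[0,1)$. But if $x\in\Sigma$ satisfies $x = tf(x)$ with $t\in(0,1)$, then $f(x)-x = (1/t-1)\,x$, so the vector whose angle with $\mathbf{1}$ you must control is $x$ itself, not the normal $\mathbf{1}$. For $x\geqslant_K 0$ on $\Sigma$ the angle $\angle(x,\mathbf{1})$ ranges over $[0,\arccos(1/\sqrt{N})]$, attaining the upper end near the vertices of the simplex. Hence your lower bound $\angle(f(x)-x,\mathbf{1})\ge \tfrac{\pi}{2}-\gamma$ rules out $f(x)-x \parallel x$ only when $\arccos(1/\sqrt{N}) < \tfrac{\pi}{2}-\gamma$, i.e.\ when $\gamma < \arcsin(1/\sqrt{N})$. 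For larger $\gamma$ the bound is perfectly compatible with $f(x)-x$ being a positive multiple of $x$ at points near the corners of $\Sigma$, so $H_t=\mathbb{I}-t\bar f$ may vanish on $\partial D$ and the degree on $D$ is not shown to be $1$. This is precisely the place where~\ref{lab:Z5} with $\gamma\neq 0$ is genuinely weaker than~\ref{lab:Z4}, and something extra---a restriction on $\gamma$, a different homotopy adapted to the cone $C(u,\cos\gamma)$ in which $f(x)-x$ is trapped, or pairing the inner computation with the outer degree-$0$ calculation coming from~\ref{lab:S1} as in Theorem~\ref{thm5}---is needed to close the argument.
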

\begin{remark}
  The above assumption means that there exists $x^{\prime}$ such that $f(x^{\prime}) = x^{\prime} + \lambda {\bf 1}$.
  In \cite[Chapter 6]{Lemmens2012} such $x'$ is called {\em additive eigenvalue of $f$}.
  %TODO DLACZEGO??
\end{remark}

%\begin{definition}
%  Let $v \gg_K 0$ be fixed. We new order by the following: $x^{\prime} \geq_K x$ if and only if $\|x^{\prime}\|_v \geq \|x\|_v$.
%\end{definition}

\section{Examples}

To illustrate the results obtained in the previous sections in a concrete application, we present two examples.

The following two lists provide examples of typical activation functions $\sigma \colon \mathbb{R}_{+} \to \mathbb{R}_{+}$ used in theory of neural networks:
\begin{itemize}
  \item (sigmoid) $\xi \mapsto \frac{1}{1+\exp (-\xi)}$,
  \item (capped ReLU) $\xi \mapsto \min \{\xi, \beta\}$, for $\beta > 0$,
  \item (saturated linear) $\xi \mapsto\left\{\begin{array}{ll}1, & \text { if } \xi > 1, \\ \xi, & \text { if } 0 \leq \xi \leq 1, \end{array}\right.$
    \item (inverse square root unit) $\xi \mapsto \frac{\xi}{\sqrt{1+\xi^{2}}}$,
    \item (arctangent) $\xi \mapsto(2 / \pi) \arctan \xi$,
      %\item (cotangent) $\xi \mapsto \tan^{-1} \xi$,
    \item (hyperbolic tangent) $\xi \mapsto \tanh \xi$,
    \item (inverse hyperbolic sine) $\xi \mapsto \operatorname{arcsinh} \xi$,
    \item (Elliot) $\xi \mapsto \frac{\xi}{1+\xi}$,
    \item (logarithmic) $\xi \mapsto \log (1+\xi)$,
    \item (Swish function) $\xi \mapsto \frac{\xi}{1+\exp (-\xi)}$,
    \item (Mish function) $\xi \mapsto \xi \tanh (\log (1+\exp (\xi)))$.
\end{itemize}

\begin{example}
  (An example of a mapping that satisfies weaker than monotonicity condition and is not monotonic) We will show an example of a function satisfying Condition~\eqref{eq:Z2}, but which is not monotone.

  Let us consider zigzag function $\alpha \colon \mathbb{R} \to \mathbb{R}$ defined by
  \begin{equation}
    \alpha(t) :=
    \left\{\begin{array}{ll}
      t - 2n, & \text { if } t \in [2 n, 2n + 1), \\
      -t + 2n + 2, & \text { if } t \in [2n + 1, 2n + 2),
    \end{array}\right.
  \end{equation}
  for $n \in \mathbb{Z}$.
  \begin{figure}
    \begin{center}
      \includegraphics[width=0.45\textwidth,trim=0cm 3.5cm 0cm 4cm,clip=true]{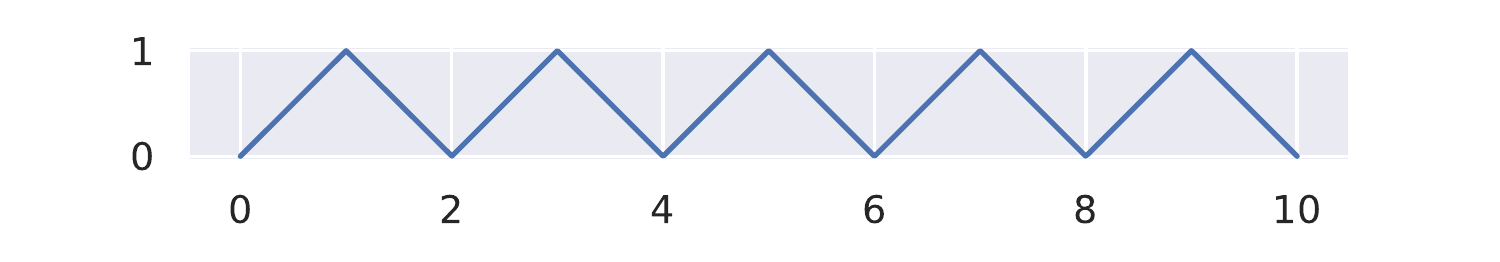}
      \caption{Plot of function $\alpha$.}
      \label{fig.alphacurve}
    \end{center}
  \end{figure}

  Let us define $\mathcal{I} \colon \mathbb{R}_{+}^{2} \to \mathbb{R}_{+}^{2}$ as follows
  %\begin{equation} 
  %%\mathcal{I}(x, y) :=
  %\left\{\begin{array}{ll}
  %\left(1 - \delta(x,y) \right)(x, y) + \delta(x,y)(x, x), & \operatorname{for} \frac{1}{4}\|(x, y)\|_{\infty}-\left\lfloor\frac{1}{4}\|(x, y)\|_{\infty}\right\rfloor \in \left[0, \frac{1}{4}\right), \\
  %\delta(x,y)(x, x) + \left(1-\delta(x,y)\right)(y, x), & \operatorname{for}\frac{1}{4}\|(x, y)\|_{\infty}-\left\lfloor\frac{1}{4}\|(x, y)\|_{\infty}\right\rfloor \in \left[\frac{1}{4}, \frac{3}{4}\right) \\
  %%\left(1-\delta(x,y)\right)(y, x) + \delta(x,y)(x, x), & \operatorname{for} \frac{1}{4}\|(x, y)\|_{\infty}-\left\lfloor\frac{1}{4}\|(x, y)\|_{\infty}\right\rfloor \in \left[\frac{2}{4}, \frac{3}{4}\right), \\
  %\delta(x,y)(x, x) + \left(1-\delta(x,y)\right)(x, y), & \operatorname{for}\frac{1}{4}\|(x, y)\|_{\infty}-\left\lfloor\frac{1}{4}\|(x, y)\|_{\infty}\right\rfloor \in \left[\frac{3}{4}, 1\right),
  %\end{array}\right.
  %\end{equation}
  \begin{equation*} 
    %\mathcal{I}(x, y) :=
    \left\{\begin{array}{ll}
      \left(1 - \delta(x,y) \right)(x, y) + \delta(x,y)(x, x), & \operatorname{for} \zeta(x,y) \in \left[0, \frac{1}{4}\right), \\
      \delta(x,y)(x, x) + \left(1-\delta(x,y)\right)(y, x), & \operatorname{for} \zeta(x,y) \in \left[\frac{1}{4}, \frac{3}{4}\right) \\
      %\left(1-\delta(x,y)\right)(y, x) + \delta(x,y)(x, x), & \operatorname{for} \zeta(x,y) \in \left[\frac{2}{4}, \frac{3}{4}\right), \\
      \delta(x,y)(x, x) + \left(1-\delta(x,y)\right)(x, y), & \operatorname{for} \zeta(x,y) \in \left[\frac{3}{4}, 1\right),
    \end{array}\right.
  \end{equation*}
  where $\delta(x,y) := \alpha\left(\|(x, y)\|_{\infty}\right)$ and $\zeta(x,y) := \frac{1}{4}\|(x, y)\|_{\infty}-\left\lfloor\frac{1}{4}\|(x, y)\|_{\infty}\right\rfloor$.
  Then $\mathcal{I}(2,0) = (0,2)$ and $\mathcal{I}(4,0) = (4,0)$, so $(2,0) \leq(4,0)$.
  However, $\mathcal{I}(2,0) \not\leqslant_K \mathcal{I}(4,0)$.
  Furthermore, note that for $\left(x^{\prime}, y^{\prime}\right) \geq (x, y)$ we have
  \begin{equation*}
    \left\|\mathcal{I}\left(x^{\prime}, y^{\prime}\right)\right\|_{\infty} = \left\|\left(x^{\prime}, y^{\prime}\right)\right\|_{\infty} \geq \|(x, y)\|_{\infty} = \|\mathcal{I}(x, y)\|_{\infty}.
  \end{equation*}
  Note also that $\mathcal{I}\big(\beta(x, y)\big) = \beta \mathcal{I}(x, y)$, for all $\beta > 0$.

  Let $\sigma \colon \mathbb{R}_{+} \to \mathbb{R}_{+}$ be any activation function from the list above.
  If in the formula for $\mathcal{I}$ we put $\sigma(x)$ instead of $x$ and $\sigma(y)$ instead of $y$, we would get scalability of $\mathcal{I}$, i.e. $\mathcal{I}\big(\beta(x, y)\big) < \beta \mathcal{I}(x, y)$, for $\beta > 1$.

  \begin{figure}
    \begin{center}
      \includegraphics[width=0.9\columnwidth]{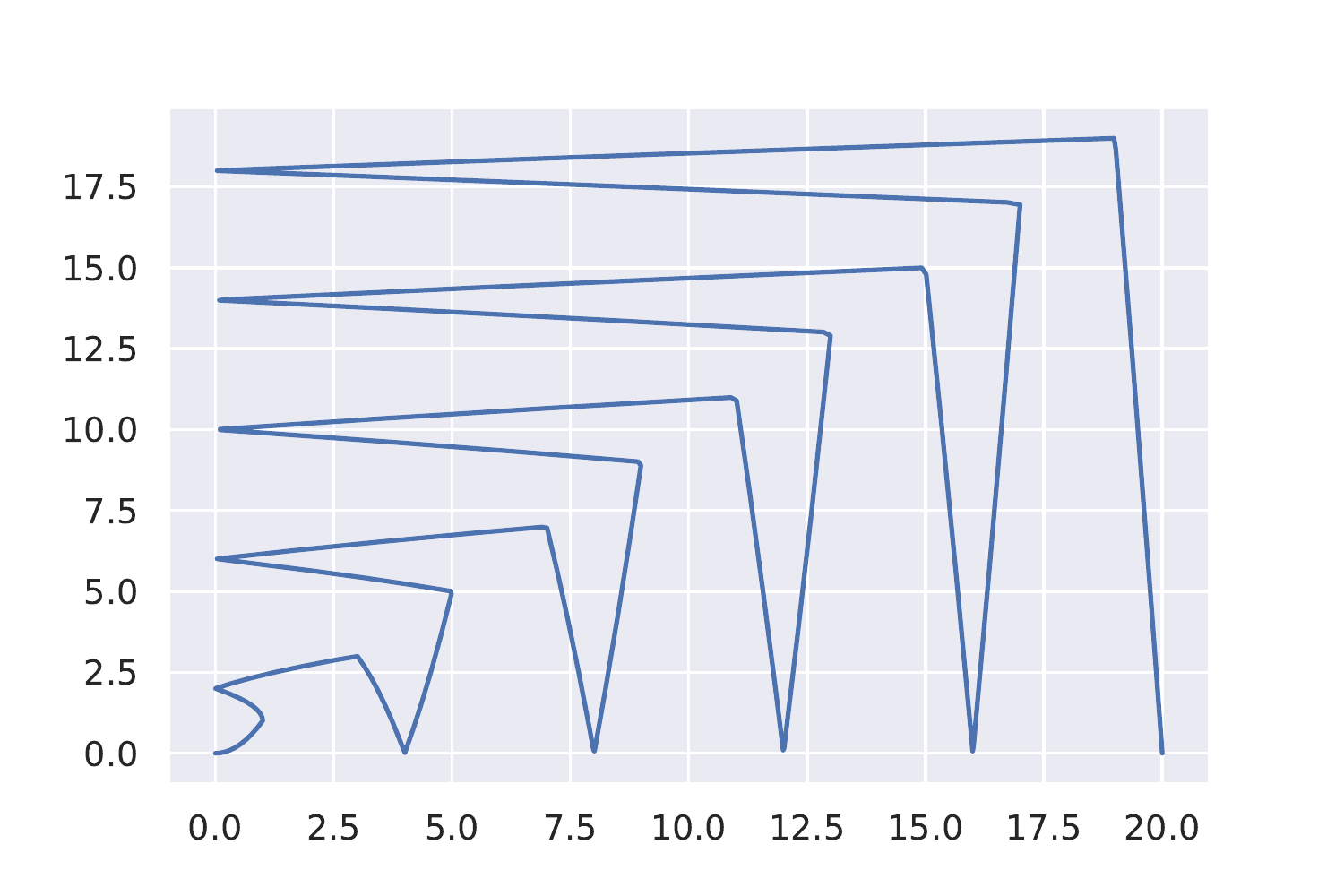}
      \caption{The image of interval $[0, 20]$ onto $\mathbb{R}^2$ via mapping $\mathcal{I}$.}
      \label{fig.drunkensailor}
    \end{center}
  \end{figure}

  %\begin{figure}
  %  \begin{center}
  %    \includegraphics[width=0.9\columnwidth]{images/swallowtail.pdf}
  %    \caption{Image of many intervals via mapping $\mathcal{I}$.}
  %    \label{fig.swallowtail}
  %  \end{center}
  %\end{figure}

\end{example}

\begin{example}
  Consider activation function $\sigma \colon \mathbb{R}^2 \to \mathbb{R}^2$ given by the formula
  \begin{equation}
    \sigma(x_1, x_2) := \big( \sigma_1(x_1), \sigma_2(x_2) \big).
  \end{equation}
  Let
  \begin{equation}
    x :=
    \begin{bmatrix}
      x_1 \\
      x_2
    \end{bmatrix}, \quad
    W :=
    \begin{bmatrix}
      0 & -1 \\
      1 & 0
    \end{bmatrix}, \quad 
    b :=
    \begin{bmatrix}
      0 \\
      0
    \end{bmatrix}.
  \end{equation}
  Then $f(x) = \sigma(W x + b) = \big( \sigma_1(x_2), \sigma_2(x_1) \big)$.
  Let $K := \mathbb{R}^{N}_{+}$.
  If $\sigma_1 = \sigma_2$ is the sigmoid activation function and $x = (1, 2)$, $x^{\prime} = (2, 4)$, then $x \leqslant_K x^{\prime}$, but $f(x) \not\leqslant_K f(x^{\prime})$.
  However, $\|f(x)\|_v \leq \|f(x^{\prime})\|_v$ for $v = {\bf 1}$.
\end{example}

\section{Applications}

To illustrate the results obtained in the previous section a concrete application will be given.
%TODO

\bibliographystyle{IEEEtran}
\bibliography{IEEEabrv,references}

\end{document}